\numberwithin{equation}{section}
\newcommand{\nc}{\newcommand}
\newenvironment{red}
{\relax\color{red}}
{\hspace*{.5ex}\relax}
\newenvironment{blue}
{\relax\color{Dandelion}}
{\hspace*{.5ex}\relax}
\newenvironment{pink}
{\relax\color{WildStrawberry}}
{\hspace*{.5ex}\relax}
\newcommand{\ber}{\begin{red}}
\newcommand{\er}{\end{red}}
\newcommand{\berm}[1]{\begin{red}{}\marginnote{\fbox{\scshape\lowercase{M}}}
#1}  
\newcommand{\bermo}[1]{\begin{pink}{}\marginnote{\fbox{\scshape\lowercase{M}}}
#1}  
\newcommand{\berE}[1]{\begin{red}{}\marginnote{\fbox{\scshape\lowercase{E}}}
#1}  
\newcommand{\berMH}[1]{\begin{red}{}\marginnote{\fbox{\scshape\lowercase{MH}}}
#1}  
\newcommand{\berS}[1]{\begin{red}{}\marginnote{\fbox{\scshape\lowercase{S}}}
#1}  
\nc{\ero}{\end{pink}}
\newcommand{\beb}{\begin{blue}}
\newcommand{\eb}{\end{blue}}
\renewcommand{\le}{\leqslant}
\renewcommand{\ge}{\geqslant}
\nc{\op}{\operatorname}
\theoremstyle{plain}
\newtheorem{lemma}{Lemma}[section]
\newtheorem{proposition}[lemma]{Proposition}
\newtheorem{theorem}[lemma]{Theorem}
\theoremstyle{definition}
\newtheorem{remark}[lemma]{Remark}
\newtheorem{definition}[lemma]{Definition}
\newtheorem{corollary}[lemma]{Corollary}
\newtheorem*{convention}{Convention}
\nc{\Prop}{\begin{proposition}}
\nc{\enprop}{\end{proposition}}
\nc{\Lemma}{\begin{lemma}}
\nc{\enlemma}{\end{lemma}}
\nc{\Cor}{\begin{corollary}}
\nc{\encor}{\end{corollary}}
\nc{\Rem}{\begin{remark}}
\nc{\enrem}{\end{remark}}
\nc{\Def}{\begin{definition}}
\nc{\edf}{\end{definition}}
\nc{\shc}{\mathcal{C}}
\newcommand{\Q}{\mathbb{Q}}
\newcommand{\C}{\mathbb{C}}
\newcommand{\R}{\mathbb{R}}
\newcommand{\Seq}{\Sigma}
\newcommand{\dT}{\mathrm{T}}
\newcommand{\Ca}{\mathscr{C}}
\nc{\F}{\mathcal{F}}
\newcommand{\D}{\mathscr{D}}
\nc{\HOM}{\on{H\textsc{om}}}
\newcommand{\M}{\mathrm{M}}
\newcommand{\W}{\mathsf{W}}
\newcommand{\Z}{\mathbb{\ms{1mu}Z\ms{1mu}}}
\newcommand{\seteq}{\mathbin{:=}}
\nc{\ov}[1]{\overline{#1}}
\nc{\Wlmj}[3]{\W_{#2,#3}^{(#1)}}
\nc{\Mkl}[2]{\M_\ttww(#1,#2)}
\nc{\rmat}[1]{{\mathbf r}_{\mspace{-2mu}\raisebox{-.5ex}{${\scriptstyle{#1}}$}}}
\newcommand{\on}{\operatorname}
\nc{\de}{\on{\textfrak{d}}}
\nc{\tl}{\widetilde{\lambda}}
\nc{\mqs}{(-q^2)}
\nc{\Cquiver}{\upsigma}
\nc{\mut}[1]{{\mu}_{\mspace{-2mu}\raisebox{-.5ex}{${\scriptstyle{#1}}$}}}
\newcommand{\g}{{\mathfrak{g}}}
\newcommand{\n}{{\mathfrak{n}}}
\newcommand{\isoto}[1][]{\mathop{\xrightarrow%
[{\raisebox{.3ex}[0ex][.3ex]{$\scriptstyle{#1}$}}]%
{{\raisebox{-.6ex}[0ex][-.6ex]{$\mspace{2mu}\sim\mspace{2mu}$}}}}}
\newcommand{\id}{\on{id}}
\newcommand{\soplus}{\mathop{\mbox{\normalsize$\bigoplus$}}\limits}
\newcommand{\ww}{ \textbf{\textit{w}}}
\newcommand{\ttww}{{\widetilde{\ww}} }
\nc{\nconv}{\mathop{\mbox{\large $\odot$}}}
\nc{\nnconv}{\mathop{\mbox{\large $\star$}}}
\nc{\lb}{\llbracket}
\nc{\rb}{\rrbracket}
\newcommand{\ko}{{{\mathbf{k}}}}
\nc{\la}{\lambda}
\nc{\La}{\Lambda}
\nc{\ve}{\varepsilon}
\nc{\ep}{\epsilon}
\nc{\vp}{\varphi}
\nc{\lan}{\langle}
\nc{\ran}{\rangle}
\nc{\Uqg}{U_q(\g)}
\nc{\Aqg}{A_q(\g)}
\nc{\Aqn}{A_q(\n)}
\nc{\al}{\alpha}
\nc{\be}{\beta}
\nc{\ga}{\gamma}
\nc{\wt}{\operatorname{wt}}
\nc{\ch}{\operatorname{ch}}
\nc{\norm}{{\mathrm{norm}}}
\nc{\aff}{{\mathrm{aff}}}
\nc{\Maf}{M_\aff}
\nc{\ev}{{\mathrm{even}}}
\nc{\od}{{\mathrm{odd}}}
\nc{\Sev}{\Seq^{\ev}}
\nc{\Sod}{\Seq^{\od}}
\nc{\Spl}{\Seq^{+}}
\nc{\Smi}{\Seq^{-}}
\nc{\low}{{\mathrm{low}}}
\nc{\upper}{{\mathrm{up}}}
\nc{\one}{{\bf{1}}}
\nc{\To}[1][{\hspace{2ex}}]{\xrightarrow{\,#1\,}}
\nc{\te}{\tilde{e}}
\nc{\tw}{{\widetilde{w}}}
\nc{\tww}{\ww}
\nc{\tuu}{{\mathsf{u}}}
\nc{\tel}{\tilde{e}^\low}
\nc{\teu}{\tilde{e}^\upper}
\nc{\tf}{\tilde{f}}
\nc{\tfl}{\tilde{f}^\low}
\nc{\tfu}{\tilde{f}^\upper}
\nc{\tE}{\widetilde{E}}
\nc{\tF}{\widetilde{F}}
\nc{\tFF}{\widetilde{\F}}
\nc{\tB}{\widetilde{B}}
\nc{\tz}{\tilde{z}}
\nc{\tQ}{\hspace{-.2ex}\textbf{\textit{Q}}}
\nc{\tb}{\tilde{b}}
\nc{\Ft}{\F^\dT}
\nc{\Seed}{\mathcal{S}}
\nc{\cor}{\mathbf{k}}
\nc{\tens}{\mathop\otimes}
\nc{\gmod}{\mbox{-$\mathrm{gmod}$}}
\nc{\md}{\mbox{-$\mathrm{mod}$}}
\nc{\uqm}{\mathscr{C}_\g}
\nc{\gMod}{\mbox{-$\mathrm{gMod}$}}
\nc{\proj}{\mbox{-$\mathrm{proj}$}}
\nc{\gproj}{\mbox{-$\mathrm{gproj}$}}
\nc{\smod}{\mbox{-$\mathrm{mod}$}}
\nc{\nmod}{\mbox{-$\mathrm{nilmod}$}}
\nc{\seed}{\mathscr{S}}
\newcommand{\cmA}{\mathsf{A}}
\nc{\Rnorm}{R^{\mathrm{norm}}}
\nc{\Runiv}{R^{\ms{1mu}\mathrm{univ}}}
\nc{\Rren}{R^{\ms{1mu}\mathrm{ren}}} 
\nc{\col}{\colon}
\nc{\epiTo}[1][]{\xymatrix{\ar@{->>}[r]^-{{#1}}&}}
\nc{\epito}{\twoheadrightarrow}
\nc{\monoTo}[1][]{\xymatrix{\ar@{>->}[r]^-{{#1}}&}}
\nc{\monogets}[1][]{\xymatrix{&\ar@{_{(}->}[l]^-{{#1}}}}
\nc{\sym}{\mathfrak{S}}
\nc{\rl}{\mathsf{Q}}
\nc{\prl}{\rl_+}
\nc{\crl}{\mathsf{Q}^\vee}
\nc{\pcrl}{\crl_+}
\nc{\Qq}{{\Q(q)}}
\nc{\wl}{\mathsf{P}}   
\nc{\Oint}{\mathcal{O}_{{\mathrm{int}}}}
\newcommand{\scbul}{{\,\raise1pt\hbox{$\scriptscriptstyle\bullet$}\,}}
\nc{\conv}{\mathop{\mathbin{\mbox{\large $\circ$}}}}
\nc{\pv}{  \to\updownarrow\gets }
\nc{\nv}{  \longleftrightarrow {\raise -1pt\hbox{$\hspace{-2ex}\begin{matrix}\downarrow \\[-1ex] \uparrow\end{matrix}$}} }
\newcommand{\Hom}{\operatorname{Hom}}
\newcommand{\Ext}{\operatorname{Ext}}
\newcommand{\End}{\operatorname{End}}
\nc{\K}{{K}}
\nc{\Kex}{{\K}^{\mathrm{ex}}}
\nc{\Uex}{\Uppsi_{\mathrm{ex}}}
\nc{\Kfr}{{\K}^{\mathrm{f\mspace{.01mu}r}}}
\nc{\cl}{{\mathrm{cl}\ms{1mu}}}
\nc{\ben}{\begin{enumerate}}
\nc{\ee}{\end{enumerate}}
\nc{\bnum}{\begin{enumerate}[label=\rm(\roman*)]}
\nc{\bna}{\begin{enumerate}[label=\rm(\alph*)]}
\nc{\bc}{\begin{cases}}
\nc{\ec}{\end{cases}}
\newenvironment{myequation}
{\relax\setlength{\arraycolsep}{1pt}\begin{eqnarray}}
{\end{eqnarray}}
\newenvironment{myequationn}
{\relax\setlength{\arraycolsep}{1pt}\begin{eqnarray*}}
{\end{eqnarray*}}
\nc{\eq}{\begin{myequation}}
\nc{\eneq}{\end{myequation}}
\nc{\eqn}{\begin{myequationn}}
\nc{\eneqn}{\end{myequationn}}
\nc{\hs}{\hspace*}
\newenvironment{myarray}[1]{\relax\setlength{\arraycolsep}{.5pt}
\renewcommand{\arraystretch}{1.3}
\begin{array}{#1}}{\end{array}\relax}
\newcommand{\ba}{\begin{myarray}}
\newcommand{\ea}{\end{myarray}}
\nc{\noi}{\noindent}
\nc{\ang}[1]{\langle{#1}\rangle}
\nc{\fr}{{\mathrm{fr}}}
\nc{\qt}[1]{\quad\text{#1}\quad}
\nc{\ol}{\overline}
\nc{\true}{\delta}
\nc{\ms}{\mspace}
\renewcommand{\mod}{\ms{3mu}\mathbin{\mathrm{mod}}\ms{1mu}}
\nc{\vs}{\vspace*}
\nc{\bl}{\bigl(}
\nc{\br}{\bigr)}
\nc{\bep}{\ol{\ep}}
\nc{\bal}{\,\ol{\al}}
\nc{\qtq}[1][{and}]{\quad\text{#1}\quad}
\nc{\set}[2]{\left\{{#1}\mid{#2}\right\}}
\nc{\ro}{{\rm(}}
\nc{\rf}{{\rm)}\xspace}
\nc{\Proof}{\begin{proof}}
\nc{\QED}{\end{proof}}
\nc{\monoto}[1][]{\xymatrix@C=2ex{\ar@{>->}[r]^-{{#1}}&}\ms{-8mu}}
\nc{\etens}{\boxtimes}
\nc{\height}[1]{\vert{#1}\vert}
\nc{\dsum}{\mathop\sum\limits}
\nc{\Lrev}{L^{\bek{\rev}\ek}}
\nc{\rev}{\mathrm{rev}}
\nc{\fw}{\Lambda}
\nc{\uqpg}{U_q'(\g)}
\nc{\tp}{\ms{1.5mu}{\widetilde{p}}\ms{2mu}}
\nc{\Deg}{\mathrm{Deg}}
\nc{\Bg}{\mathcal{G}}
\nc{\wb}[1]{\mbox{$\rule[-1.1ex]{0ex}{2ex}#1$}}
\nc{\bwr}{\mbox{\large$\wr$}}
\nc{\vphi}{\varphi}
\nc{\G}{\mathcal{G}}
\nc{\Li}{\La^\infty}
\nc{\Di}{\Deg^\infty}
\nc{\zero}{\ms{1mu}\mathrm{zero}\ms{1mu}}
\nc{\cwl}{\wl^\vee}
\nc{\rc}{renormalizing coefficient\xspace}
\nc{\cz}{{\cor[z^{\pm1}]}}
\nc{\ake}[1][2ex]{\rule[-.5ex]{0ex}{#1}}
\nc{\akew}[1][2ex]{\rule[-1ex]{#1}{0ex}}
\nc{\rd}{{}^*\ms{-3mu}}
\nc{\st}[1]{\{{#1}\}}
\nc{\corh}{\widehat{\cor}}
\nc{\czt}{\cz^\times}
\nc{\eps}{\varepsilon}
\nc{\rr}{rationally renormalizable\xspace}
\nc{\QHA}{\mathrm{QHA}}
\nc{\sig}{{\sigma(\g)}}
\nc{\sigZ}{{\sigma_0(\g)}}
\nc{\sigQ}{{\sigma_Q(\g)}}
\nc{\rs}{ \mathsf{s} }
\nc{\rE}{ \mathsf{E} }
\nc{\rW}{ \mathcal{W} }
\nc{\rES}{ \mathcal{E} }
\nc{\scor}{ {\cor_0} }
\nc{\cat}{\mathcal{C}}
\nc{\catB}{\mathcal{B}}
\nc{\trivial}{\mathbf{1}}
\nc{\lP}{\mathcal{P}}
\nc{\lQ}{\mathcal{Q}}
\nc{\lse}{\mathsf{e}}
\nc{\bse}{\bar{\lse}}
\nc{\wlP}{\mathsf{P}}
\nc{\rlQ}{\mathsf{Q}}
\nc{\cmC}{\mathsf{C}}
\nc{\tcmC}{\widetilde{\mathsf{C}}}
\nc{\tcmc}{\widetilde{c}}
\nc{\CaQ}{{\Ca_\g^{Q}}}
\nc{\MQ}{{ \mathsf{M}_Q }}
\nc{\mQ}[1]{{ \mathsf{m}^Q_{#1} }}
\nc{\rank}{{ \mathrm{rank} }}
\nc{\vpi}{\varpi}
\newcommand{\drt}[2]{ { \left( \begin{matrix}  \ \ \ \   #1 \\ #2 \end{matrix}\right) } }
\newcommand{\ddrt}[2]{ { \left( \begin{matrix}   \ \ \   #1 \\ #2 \end{matrix}\right) } }
\newcommand{\prt}[2]{ \left( \begin{matrix} #1 \\ #2 \end{matrix}\right) }
\newcommand{\pprt}[4]{ {  \fontsize{10}{10}\selectfont \left( \begin{matrix} #1 \\ #2 \\ #3 \\ #4 \end{matrix}\right) \fontsize{12}{12}\selectfont} }
\nc{\sn}[1][{1.2ex}]{\vs{#1}\noi}
\nc{\dual}{\D}
\nc{\Up}{U'_q(\g)}
\nc{\corz}{\cor[z^{\pm1}]}
\nc{\kz}{\cor(z)}
\nc{\afwt}{affine highest weight}
\nc{\gf}{{\g_{\mathrm{fin}}}}
\nc{\Df}{{\Delta^+_{Q}}} 
\nc{\Pf}{{\Pi_{Q}}}
\nc{\If}{{\mathrm{I_{fin}}}}
\nc{\scb}[2][.8]{\scalebox{#1}{$#2$}}
\nc{\df}{{\Delta_{\mathrm{fin}}}}
\nc{\rlq}{\mathsf{Q}_Q}
\nc{\Dynkin}{\mathsf{D}}
\nc{\Dynkinf}{\Dynkin_{\rm fin}}
\nc{\orb}{\mathrm{orb}}
\nc{\ord}{\mathrm{ord}}
\newcommand{\fd}{\mathsf{d}}
\newcommand{\hI}{\widehat{I}}
\nc{\esig}{\zeta}
\newcommand{\qm}{{q}_{\mspace{1mu}\raisebox{-.5ex}{${\scriptstyle{\mathrm{sh}}}$}}}
\nc{\ble}[1]{\underline{#1}}
\nc{\dd}{\mathrm{d}_\circ}
\nc{\dist}{\mathrm{d}}
\nc{\Qd}{\mathscr{Q}}
\title[Simply-laced root systems arising from quantum affine algebras]%
{ Simply-laced root systems arising from quantum affine algebras }
\author[M. Kashiwara]{Masaki Kashiwara}
\thanks{The research of M.\ Kashiwara
was supported by Grant-in-Aid for Scientific Research (B)
 20H01795,  Japan Society for the Promotion of Science.}
\address[M. Kashiwara]{
Kyoto University Institute for Advanced Study,
Research Institute for Mathematical Sciences, Kyoto University,
Kyoto 606-8502, Japan \& Korea Institute for Advanced Study, Seoul 02455, Korea }
\email[M. Kashiwara]{masaki@kurims.kyoto-u.ac.jp}
\author[M. Kim]{Myungho Kim}
\address[M. Kim]{Department of Mathematics, Kyung Hee University, Seoul 02447, Korea}
\email[M. Kim]{mkim@khu.ac.kr}
\thanks{The research of M.\ Kim was supported by the National Research Foundation of
Korea(NRF) Grant funded by the Korea government(MSIP) (NRF-2017R1C1B2007824 and NRF-2020R1A5A1016126).}
\author[S.-j. Oh]{Se-jin Oh}
\thanks{ The research of S.-j.\ Oh was supported by the Ministry of Education of the Republic of Korea and the National Research Foundation of Korea (NRF-2019R1A2C4069647).}
\address[S.-j. Oh]{Department of Mathematics, Ewha Womans University, Seoul 03760, Korea}
\email[S.-j. Oh]{sejin092@gmail.com}
\author[E. Park]{Euiyong Park}
\thanks{The research of E.\ P.\ was supported by the National Research Foundation of Korea(NRF) Grant funded by the Korea Government(MSIP)(NRF-2020R1F1A1A01065992 and NRF-2020R1A5A1016126)}
\address[E. Park]{Department of Mathematics, University of Seoul, Seoul 02504, Korea}
\email[E. Park]{epark@uos.ac.kr}
\keywords{Block decomposition, Quantum affine algebras, R-matrices, Root systems}
\subjclass[2010]{16D90, 17B37, 18D10}
\date{September 23, 2021.}
\begin{document}

\begin{abstract}

Let $U_q'(\g)$ be a quantum affine algebra with an indeterminate $q$ and let $\Ca_\g$ be the category of finite-dimensional integrable $U_q'(\g)$-modules.
We write $\Ca_\g^0$ for the monoidal subcategory of $\Ca_\g$ introduced by Hernandez-Leclerc. 
In this paper, we associate a simply-laced finite type root system to
each quantum affine algebra $U_q'(\g)$ in a natural way, and 
show that the  block decompositions of $\Ca_\g$ and $\Ca_\g^0$ are parameterized by the lattices associated with the root system.
We first define a certain abelian group $\rW$ (resp.\ $\rW_0$) arising from simple modules of $ \Ca_\g$ (resp.\ $\Ca_\g^0$) by using the invariant $\Li$ introduced in the previous work by the authors.
The groups  $\rW$ and $\rW_0$ have the subsets $\Delta$ and $\Delta_0$ determined by the fundamental representations in $ \Ca_\g$ and $\Ca_\g^0$ respectively. 
We prove that the pair $( \R \otimes_\Z \rW_0, \Delta_0)$  
is an irreducible simply-laced root system 
of finite type and the pair $( \R \otimes_\Z \rW, \Delta) $ is isomorphic to the direct sum of infinite copies of $( \R \otimes_\Z \rW_0, \Delta_0)$ as a root system.

\end{abstract}

\maketitle
\tableofcontents

\section*{Introduction}

Let $q$ be an indeterminate and let $U_q'(\g)$ be a quantum affine algebra. The category $\Ca_\g$ of finite-dimensional integrable $U_q'(\g)$-modules has a rich structure.
For example, the category $\Ca_\g$ is not semi-simple and has a rigid monoidal category structure. 
Because of its rich structure,  it has been studied actively in various research areas of mathematics and physics (see \cite{AK, CP94, FR99, Kas02,  Nak01} for examples).

The category $\Ca_\g$ has been studied in the viewpoint of cluster algebras. 
Suppose that  $\g$ is of simply-laced affine ADE types.
In \cite{HL10}, 
Hernandez and Leclerc defined the full subcategory $\Ca_\g^0$ of $\Ca_\g$
 such that all simple subquotients of its objects are obtained by simple subquotients of tensor products of certain fundamental representations.  
They then introduced certain monoidal subcategories $\Ca_\ell$ ($\ell \in \Z_{>0 }$) and studied their Grothendieck rings using cluster algebras. 
As any simple module in $\Ca_\g$ can be obtained from a tensor product of suitable parameter shifts of simple modules in $\Ca_\g^0$, 
the category $\Ca_\g^0$ takes an essential position in $\Ca_\g$. 
Note that an algorithm for computing $q$-character of Kirillov-Reshetikhin modules for any untwisted quantum affine algebras was described in \cite{HL16}
by studying the cluster algebra structure of the Grothendieck ring of the subcategory $\Ca_\g^-$ of $\Ca_\g^0$.  
On the other hand, they introduced another abelian monoidal subcategory $\CaQ$
 which categorifies the coordinate ring $\C[N]$ of the unipotent group associated with the finite-dimensional simple Lie algebra $\g_0$ inside $\g$ (\cite{HL15}).
For each Dynkin quiver $Q$, they defined an abelian subcategory $\CaQ$ of $\Ca_\g^0$ which contains some fundamental representations parameterized by the coordinates of vertices of the Auslander-Reiten quiver of $Q$, and 
proved that $\CaQ$ is stable under taking tensor product and its complexified Grothendieck ring $\C \otimes _\Z K(\CaQ)$ is isomorphic to the coordinate ring $\C[N]$.
Moreover, under this isomorphism, the set of isomorphism classes of simple modules in $\CaQ$ corresponds to the upper global basis of $\C[N]$.

The notion of the categories $\Ca_\g^0$ and $\CaQ$ is extended to all untwisted and twisted quantum affine algebras (\cite{KKKO15D, KO18, OS19, OhSuh19}). 
Let 
$
 \sig \seteq I_0 \times \cor^\times / \sim, 
$
where the equivalence relation is given by 
$ (i,x) \sim (j,y) $ if and only if $V(\varpi_i)_x \simeq V(\varpi_j)_y$.
The set $\sig$ has a quiver structure determined by the pole of R-matrices between tensor products of fundamental representations $V(\varpi_i)_x$ ($(i,x)\in \sig$).
 Let $\sigZ$ be a connected component of $\sig$. 
Then the category $\Ca_\g^0$ is defined to be the smallest full subcategory of $\Ca_\g$ such that 
\bna
\item $\Ca_\g^0$ contains $V(\varpi_i)_x$ for all $(i,x) \in \sigZ$,
\item $\Ca_\g^0$ is stable by taking subquotients, extensions and tensor products.
\ee
The subcategory $\CaQ$ was introduced in \cite{KKKO15D} for twisted affine type $A^{(2)}$ and $D^{(2)}$, 
in \cite{KO18} for untwisted affine type $B^{(1)}$ and $C^{(1)}$, and in \cite{OS19, OhSuh19} for exceptional affine type.
 For a Dynkin quiver $Q$ of a certain type with additional data,
a finite subset $\sigQ$ of $\sigZ$ was determined. 
Then the category $\CaQ$ is defined to be the smallest full subcategory of $\Ca_\g^0$ such that 
\bna
\item $\CaQ$ contains $\trivial$ and $V(\varpi_i)_x$ for all $(i,x) \in \sigQ$,
\item $\CaQ$ is stable by taking subquotients, extensions and tensor products.
\ee
(see Section \ref{Sec: HL cat} and \ref{Sec: CaQ} for more details)

\smallskip
We can summarize our results of this paper as follows:
\bnum
\item we associate a simply-laced root system to
each quantum affine algebra $U_q'(\g)$ in a natural way, and
\item
we give the  block decomposition of $\Ca_\g$ parameterized by a lattice $\rW$ associated with the root system.
\ee
Let $U_q'(\g)$ be a quantum affine algebra of \emph{arbitrary} type.
We first consider certain subgroups $\rW$ and $\rW_0$ of the abelian group $\Hom(\sig, \Z)$ arising from simple modules of $ \Ca_\g$ and $\Ca_\g^0$, respectively (see $\eqref{Eq: W and Delta}$).
The subgroups  $\rW$ and $\rW_0$ have the subset $\Delta$ and $\Delta_0$ determined by the fundamental representations in $ \Ca_\g$ and $\Ca_\g^0$ respectively. 
Let $ \rES \seteq \R \otimes_\Z \rW $ and $\rES_0 \seteq \R \otimes_\Z \rW_0$.
Let $\gf$ be
the simply-laced finite-type Lie algebra  corresponding to the affine type of $\g$ 
in Table $\eqref{Table: root system}$. 
When $\g$ is of untwisted affine type $ADE$, 
$\gf$ coincides with $\g_0$. 
We prove that the pair $( \rES_0, \Delta_0)$  is the irreducible root system of the Lie algebra $\gf$ and the pair $( \rES, \Delta) $ is isomorphic to the direct sum of infinite copies of $(\rES_0, \Delta_0)$ as a root system (see Theorem \ref{Thm: root system} and Corollary \ref{Cor: root system for W}).
Interestingly enough, the quantum affine algebra $U_q'(\g)$ and its Langlands dual $U_q'(^{L}\g)$, whose Cartan matrix is the transpose of that of $\g$,  yield the same simply-laced root system. 
This coincidence is also viewed as the mysterious duality between $U_q'(\g)$ and its Langlands dual $U_q'(^{L}\g)$ (see \cite{FH11A, FH11B, FR98}).
We conjecture  that the categories of representations of two quantum affine algebras are equivalent if and only if their associated root systems are the same.
In this viewpoint, the simply-laced finite type root system plays a role of an invariant for the representation categories of quantum affine algebras. 
For each simply-laced finite type root system, the corresponding untwisted quantum affine algebras, the one of the twisted type (if it exists) and its Langland dual have the same categorical structure.

\smallskip
We then show that there exist direct decompositions of $\Ca_\g$ and $\Ca_\g^0$ parameterized by elements of
$\rW$ and $\rW_0$, respectively (Theorem \ref{Thm: decomposition for Ca}),
and prove that each direct summand of the decompositions is a block (Theorem \ref{Thm: block for Ca_al}).
This approach covers all \emph{untwisted and twisted} quantum affine algebras in a uniform way, and explains transparently how the blocks of $\Ca_\g^0$ exist in the viewpoint of the root system $( \rES_0, \Delta_0)$ and the category $\CaQ$. 

 When $\g$ is of untwisted type, the block decomposition was studied in \cite{CM05, EM03, JM11}.
 Etingof and Moura (\cite{EM03})  found the  block decomposition of $\Ca_\g$ whose blocks are parameterized by the \emph{elliptic central characters} with the condition $|q|<1$.  
Later, Chari and Moura (\cite{CM05}) gave a different description of the block decomposition of $\Ca_\g$ by using the quotient group $ \mathscr{P}_q /  \mathscr{Q}_q$ of 
the \emph{$\ell$-weight lattice} $\mathscr{P}_q$ by the \emph{$\ell$-root lattice} $\mathscr{Q}_q$.
In case of the quantum affine algebra  $U_\xi({\g})$ at roots of unity, its block decomposition was studied in \cite{JM11}.
For affine Kac-Moody algebras,  the  block decomposition of the category of finite-dimensional modules was studied in \cite{CM04, Senesi10}. Note that the block decomposition for affine Kac-Moody algebras does not explain blocks for quantum affine algebras $U'_q(\g)$.
We remark that, in case of untwisted type,  the quotient group $ \mathscr{P}_q /  \mathscr{Q}_q$ given in \cite{CM05} (and also the result of \cite{EM03}) gives another group presentation of $\rW$ (see Remark \ref{Rmk: CM EM}).

The main tools to prove our results are the new invariants $\La $, $\Li $ and $\de  $ for a pair of modules in $\Ca_\g$ introduced in \cite{KKOP19C}. 
For  non-zero modules $M$ and $N$ in $\Ca_\g$ such that $\Runiv_{M,N_z}$ is \emph{\rr}, the integers $\Lambda(M,N)$, $\Lambda^\infty(M,N)$ and $\de(M,N)$ are defined by using the \emph{renormalizing coefficient} $c_{M,N}(z)$ (see Section \ref{Sec: new invariants} for details).
These invariants are quantum affine algebra analogues of the invariants (with the same notations) 
for pairs of graded modules over quiver Hecke algebras 
arising from the grading of R-matrices. 
The new invariants play similar roles in the representation theory of quantum affine algebras as the ones for quiver Hecke algebras do.

\medskip

Let us explain our results more precisely.  
Let $U_q'(\g)$ be a quantum affine algebra of \emph{arbitrary} type.
For $M \in \Ca_\g$ such that the universal R-matrix $ \Runiv_{M, V(\varpi_i)_z} $ is rationally renormalizable for any $i\in I_0$, we define $ \rE(M) \in \Hom(\sig, \Z)$ 
by 
$$
\rE(M) (i,a) := \Li(M, V(\varpi_i)_a) \qquad \text{ for } (i,a) \in \sig
$$
and investigate its properties (Lemma \ref{Lem: prop for E}). For $(i,a) \in \sig$, we set 
\begin{align*}
\rs_{i,a} \seteq \rE(V(\varpi_i)_a) \in \Hom(\sig, \Z),
\end{align*}
and 
\begin{equation*} 
\begin{aligned}
\rW \seteq \{  \rE(M) \mid M \text{ is simple in } \Ca_\g\}, & \qquad \Delta \seteq \{ \rs_{i,a} \mid (i,a) \in \sig  \} \subset \rW, \\
\rW_0 \seteq \{  \rE(M) \mid M \text{ is simple in } \Ca_\g^0 \}, & \qquad \Delta_0 \seteq \{ \rs_{i,a} \mid (i,a) \in \sigZ  \} \subset \rW_0.
\end{aligned}
\end{equation*}
Then $\rW$ and $\rW_0$ are abelian subgroups of $\Hom(\sig, \Z)$. 
Moreover, we see in Lemma \ref{Lem: (,)on W} that  
there exists a unique symmetric bilinear form $(-,-)$ on $\rW$ such that $ ( \rE(M), \rE(N) ) = - \Li(M,N) $ for any simple modules $M,N \in \Ca_\g$.
It induces a symmetric bilinear form on $ \rES$. 
Then  we prove that  the pair $(\rES_0, \Delta_0)$ is an irreducible 
 root system of the simply-laced finite type given in $\eqref{Table: root system}$ (Theorem \ref{Thm: root system}) 
and the pair $( \rES, \Delta) $ is isomorphic to the direct sum of infinite copies of $(\rES_0, \Delta_0)$ as a root system (Corollary \ref{Cor: root system for W}). Moreover the bilinear form $(-,-)$ is invariant 
under the Weyl group action. 
Theorem  \ref{Thm: root system} is proved in Section \ref{Sec: proofs} by case-by-case approach using the explicit descriptions of $\sigQ$ for $\CaQ$ given in Section \ref{Sec: CaQ} and the denominator formulas written in Appendix \ref{App: denominators}.

We then consider the block decompositions of
 $\Ca_\g$ and $\Ca_g^0$. 
For $\al \in \rW$, let $\Ca_{\g, \al}$ be the full subcategory of $\Ca_\g$ consisting of objects $X$ such that
$\rE(S)=\al$ for any simple subquotient $S$ of $X$. We show that there exist the following direct decompositions 
\begin{align*}
\Ca_\g = \bigoplus_{\al \in \rW} \Ca_{\g, \al} \quad \text{and} \quad \Ca_\g^0 = \bigoplus_{\al \in \rW_0} \Ca_{\g, \al},
\end{align*}
by proving $\Ext^1_{U_q'(\g)}(M,N)=0$ for $M \in \Ca_{\g, \alpha} $ and $ N \in \Ca_{\g, \be}$ with $\alpha \ne \be$ (Theorem \ref{Thm: decomposition for Ca}).
We set 
$\lP \seteq \bigoplus_{(i,a) \in \sig} \Z \lse_{(i,a)}$   
and $\lP_0 \seteq \bigoplus_{(i,a) \in \sigZ} \Z \lse_{(i,a)}$,
where $\lse_{(i,a)}$ is a symbol. Then we define a group homomorphism $ p\col \lP \twoheadrightarrow \rW $ by 
$ p( \lse_{(i,a)} )= \rs_{i,a}$,  
and set $ p_0 \seteq p|_{\lP_0}\col \lP_0 \twoheadrightarrow \rW_0.  $
It turns out that the kernel $\ker p_0$  coincides with the subgroup $ \lQ_0$ of $\lP_0$ generated by the elements of the form $\sum_{k=1}^m \lse_{(i_k,a_k)}$ ($(i_k,a_k) \in \sigZ$) 
such that the trivial module $\trivial$ appears in 
$ V(i_1)_{a_1} \otimes \cdots \otimes V(i_m)_{a_m}$ 
as a simple subquotient (Lemma \ref{Lem: Q ker p}). 
We then prove that $\Ca_{\g, \al}$ is a block for any $\al \in \rW$ (Theorem \ref{Thm: block for Ca_al}), which implies that the above decompositions are block decompositions of $\Ca_\g$ and $\Ca_\g^0$.

This paper is organized as follows.
In Section \ref{sec: preliminaries},  we give the necessary background on quantum affine algebras, R-matrices, and the categories $\Ca_\g$ and $\CaQ$.
In Section \ref{Sec: new invariants}, we review the new invariants introduced in \cite{KKOP19C}.
In Section \ref{Sec: Root system}, we investigate properties of $\rW$, $\Delta$ and $ \rs_{i,a}$ and state the main theorem for the root systems $( \rES_0, \Delta_0) $ and $( \rES, \Delta) $.
In Section \ref{Sec: Block decomposition}, we prove the block decompositions of $\Ca_\g$ and $\Ca_\g^0$.
Section \ref{Sec: proofs} is devoted to giving a case-by-case proof of Theorem \ref{Thm: root system}.

\medskip
{\bf Acknowledgments}

The second, third and fourth authors gratefully acknowledge for the hospitality of RIMS (Kyoto University) during their visit in 2020.
The authors would like to thank the anonymous referee for valuable comments and suggestions.

\vskip 2em 

\section{Preliminaries} \label{sec: preliminaries}

\begin{convention}
\ 
\bnum
\item
For a statement $P$, $\delta(P)$ is $1$ or $0$ according that
$P$ is true or not.
\item For an element $a$ in a field $ \cor$ and $f(z)\in\cor(z)$,
we denote
by $\zero_{z=a}f(z)$ the order of zero of $f(z)$ at $z=a$.\label{conv:zero}
\ee
\end{convention}

\subsection{Quantum affine algebras}
The quintuple $( \cmA, \wlP, \Pi, \wlP^\vee, \Pi^\vee )$ is called an \emph{affine Cartan datum} if it consists of 
\bnum
\item  an \emph{affine Cartan matrix} $\cmA=(a_{ij})_{i,j\in I}$ with a finite index set $I$, 
\item a free abelian group $\wl$ of rank $|I|+1$, called the \emph{weight lattice},
\item   a set $\Pi=\{\al_i \in \wl \ | \ i \in I\}$, whose elements are called \emph{simple roots}, 
\item the group $\wl^\vee \seteq \Hom_\Z(\wl,\Z)$, called the \emph{coweight lattice},
\item a set $\Pi^\vee=\{h_i \ | \ i \in I\} \subset \wl^\vee$, whose elements are \emph{simple coroots}, 
\ee
which satisfies the following properties:
\bna
\item $\langle h_i, \alpha_j  \rangle = a_{i,j}$ for any $i,j\in I$,
\item
for any $i\in I$, there exists $\La_i\in\wl$ such that
$\ang{h_j,\La_i}=\delta(i=j)$ for any $j\in I$,
\item $\Pi$ is linearly independent. 
\ee

 Let $\g$ be the \emph{affine Kac-Moody algebra} associated with $(\cmA,\wl,\Pi,\wl^\vee,\Pi^\vee)$. 
We set $\rlQ \seteq \bigoplus_{i \in I}\Z\al_i\subset\wlP$ called the \emph{root lattice}, and $\rlQ^+ \seteq \sum_{i \in I}\Z_{\ge 0}\al_i\subset \rlQ$.
For $\beta = \sum_{i\in I} b_i \al_i \in \rlQ^+$, we write $ |\beta| = \sum_{i\in I} b_i $.
We denote by $\updelta \in \rlQ$  the \emph{imaginary root} and by $c \in \rlQ^\vee$ 
the  \emph{central element}. Note that 
the positive  imaginary root $\Delta_+^{\rm im}$ is equal to $\Z_{>0} \updelta$  and the center of  $\g$ is generated by $c$.
We write $\wl_\cl  \seteq \wl / (\wl\cap\Q \updelta) $, called the \emph{classical weight lattice}, and take $\rho \in \wl$ (resp.\ $\rho^\vee \in \wl^\vee$) such that $\lan h_i,\rho \ran=1$ (resp.\ $\lan \rho^\vee,\al_i\ran =1$) for any $i \in I$.
There exists a  $\Q$-valued non-degenerate  symmetric bilinear form $( \ , \ )$ on $\wlP$ satisfying 
$$
\lan h_i,\la \ran= \dfrac{2(\al_i,\la)}{(\al_i,\al_i)} \quad \text{ and} \quad \lan c,\la \ran = (\updelta,\la)
$$
for any $i \in I$ and $\la \in \wlP$. We write $\W \seteq \lan s_i \mid i \in I \ran \subset \textrm{Aut}(\wlP)$ for the \emph{Weyl group} of $\cmA$, where
$s_i (\la) \seteq \la - \lan h_i,\la \ran \al_i$ for 
$\la \in  \wlP $. 
We will use the standard convention in~\cite{Kac}
to choose $0\in I$ except $A^{(2)}_{2n}$ type, in which we take the longest simple root as $\al_0$, and $B_2^{(1)}$, $A_3^{(2)}$, and $E_k^{(1)}$ ($k=6,7,8$) types, in which we take the following Dynkin diagrams:
\begin{equation} \label{Eq: DD}
\begin{aligned} 
& A^{(2)}_{2n} :   \xymatrix@C=4ex@R=3ex{
  *{\circ}<3pt> \ar@{<=}[r]_<{n \ } & *{ \circ }<3pt> \ar@{-}[r]_<{n-1}  & *{ \circ }<3pt> \ar@{-}[r]_<{n-2} & \cdots \ar@{-}[r]_<{ }   &*{\circ}<3pt> \ar@{-}[l]^<{1} &*{ \circ }<3pt>  \ar@{=>}[l]^<{ \ \ 0}  } \hs{3ex}
B^{(1)}_{2} :   \xymatrix@C=4ex@R=3ex{
  *{\circ}<3pt> \ar@{=>}[r]_<{0 \ } & *{ \circ }<3pt>  \ar@{<=}[l]^<{2} &*{ \circ }<3pt>  \ar@{=>}[l]^<{ \ \ 1}  } \hs{3ex}
A^{(2)}_{3} :   \xymatrix@C=4ex@R=3ex{
  *{\circ}<3pt> \ar@{<=}[r]_<{0 \ } & *{ \circ }<3pt>  \ar@{=>}[l]^<{2} &*{ \circ }<3pt>  \ar@{<=}[l]^<{ \ \ 1}  }
\\
&
 E^{(1)}_6 :  \raisebox{2.3em}{\xymatrix@C=3.4ex@R=2ex{ && *{\circ}<3pt>\ar@{-}[d]^<{0}  \\ && *{\circ}<3pt>\ar@{-}[d]^<{2} \\
*{ \circ }<3pt> \ar@{-}[r]_<{1}  &*{\circ}<3pt>
\ar@{-}[r]_<{3} &*{ \circ }<3pt> \ar@{-}[r]_<{4} &*{\circ}<3pt>
\ar@{-}[r]_<{5} &*{\circ}<3pt>
\ar@{-}[l]^<{\ \ 6}}} \qquad 
E^{(1)}_7 : \raisebox{1.3em}{\xymatrix@C=3.4ex@R=3ex{ && & *{\circ}<3pt>\ar@{-}[d]^<{2} \\
*{ \circ }<3pt> \ar@{-}[r]_<{0}  & *{ \circ }<3pt> \ar@{-}[r]_<{1}  &*{\circ}<3pt>
\ar@{-}[r]_<{3} &*{ \circ }<3pt> \ar@{-}[r]_<{4} &*{\circ}<3pt>
\ar@{-}[r]_<{5} &*{\circ}<3pt>
\ar@{-}[r]_<{6} &*{\circ}<3pt>
\ar@{-}[l]^<{7} } } \allowdisplaybreaks \\
& E_8^{(1)}  :  \raisebox{1.3em}{\xymatrix@C=3.4ex@R=3ex{ && *{\circ}<3pt>\ar@{-}[d]^<{2} \\
*{ \circ }<3pt> \ar@{-}[r]_<{1}  &*{\circ}<3pt>
\ar@{-}[r]_<{3} &*{ \circ }<3pt> \ar@{-}[r]_<{4} &*{\circ}<3pt>
\ar@{-}[r]_<{5} &*{\circ}<3pt>
\ar@{-}[r]_<{6} &*{\circ}<3pt>
\ar@{-}[r]_<{7} &*{\circ}<3pt>
\ar@{-}[l]^<{8} &*{\circ}<3pt>
\ar@{-}[l]^<{0} } }
\end{aligned}
\end{equation}
Note that $B^{(1)}_2$ and $A^{(2)}_3$ in the above diagram 
are denoted by $C_2^{(1)}$ and $D_3^{(2)}$ respectively in \cite{Kac}.

Let  $\g_0$ be the subalgebra of $\g$ generated by the Chevalley generators $e_i$, $f_i$  and $h_i$ for $i \in I_0 \seteq I \setminus \{ 0 \}$
and let  $\W_0$ be the subgroup of $\W$ generated by $s_i$ for $i \in I_0$. 
Note that $\g_0$ is a finite-dimensional simple Lie algebra and $\W_0$ contains the longest element $w_0$.

Let $q$ be an indeterminate and $\ko$ the algebraic closure of the subfield $\C(q)$
in the algebraically closed field $\corh\seteq\bigcup_{m >0}\C((q^{1/m}))$. 
 For $m,n \in \Z_{\ge 0}$ and $i\in I$, we define
$q_i = q^{(\alpha_i,\alpha_i)/2}$ and
\begin{equation*}
 \begin{aligned}
 \ &[n]_i =\frac{ q^n_{i} - q^{-n}_{i} }{ q_{i} - q^{-1}_{i} },
 \ &[n]_i! = \prod^{n}_{k=1} [k]_i ,
 \ &\left[\begin{matrix}m \\ n\\ \end{matrix} \right]_i=  \frac{ [m]_i! }{[m-n]_i! [n]_i! }.
 \end{aligned}
\end{equation*}

Let $d$ be the smallest positive integer such that 
$d \frac{(\al_i, \al_i)}{2}\in\Z$  for all $i\in I$.

\begin{definition} \label{Def: GKM}
The {\em quantum affine algebra} $U_q(\g)$ associated with an affine Cartan datum $(\cmA,\wl,\Pi,\wl^\vee,\Pi^\vee)$ is the associative algebra over $\ko$ with $1$ generated by $e_i,f_i$ $(i \in I)$ and
$q^{h}$ $(h \in  d^{-1} \wl^{\vee})$ satisfying following relations:
\bnum
\item  $q^0=1, q^{h} q^{h'}=q^{h+h'} $\quad for $ h,h' \in d^{-1} \wl^{\vee},$
\item  $q^{h}e_i q^{-h}= q^{\langle h, \alpha_i \rangle} e_i$,
$q^{h}f_i q^{-h} = q^{-\langle h, \alpha_i \rangle }f_i$\quad for $h \in d^{-1}\wl^{\vee}, i \in I$,
\item  $e_if_j - f_je_i =  \delta_{ij} \dfrac{K_i -K^{-1}_i}{q_i- q^{-1}_i }, \ \ \text{ where } K_i=q_i^{ h_i},$
\item  $\displaystyle \sum^{1-a_{ij}}_{k=0}
(-1)^ke^{(1-a_{ij}-k)}_i e_j e^{(k)}_i =  \sum^{1-a_{ij}}_{k=0} (-1)^k
f^{(1-a_{ij}-k)}_i f_jf^{(k)}_i=0 \quad \text{ for }  i \ne j, $
\ee
where $e_i^{(k)}=e_i^k/[k]_i!$ and $f_i^{(k)}=f_i^k/[k]_i!$.
\end{definition}

Let us denote by $U_q'(\g)$ the $\cor$-subalgebra of $U_q(\g)$ generated by $e_i,f_i,K^{\pm 1}_i$ $(i \in I)$.
The coproduct $\Delta$ of $\uqpg$ is given by
\begin{align*} 
\Delta(q^h)=q^h \tens q^h, \ \ \Delta(e_i)=e_i \tens K_i^{-1}+1 \tens e_i, \ \Delta(f_i)=f_i \tens 1 +K_i \tens f_i,
\end{align*}
and the bar involution $ \ \bar{ } \ $ of $U_q'(\g)$ is defined as
\[
 q^{1/m} \to q^{-1/m}, \qquad\qquad
e_i \mapsto e_i, \qquad\qquad f_i \mapsto f_i, \qquad\qquad K_i \mapsto K_i^{-1}.
\]

Let $\uqm$ be the category of finite-dimensional integrable
$\uqpg$-modules, 
i.e., finite- \allowbreak dimensional modules $M$ with a weight decomposition
$$
M=\soplus_{\la\in\wl_\cl}M_\la  \qquad \text{ where } M_\la=\st{u\in M\mid K_iu=q_i^{\ang{h_i,\la}}u }.
$$
Note that   the trivial  module $\trivial$ is contained in $\Ca_\g$ and
 the tensor product $\otimes$ gives a monoidal category structure on $\Ca_\g$. 
It is known that the Grothendieck ring $K(\uqm)$ is a commutative ring \cite{FR99}.
A simple module $L$ in $\uqm$ contains a non-zero vector $u \in L$ of weight $\lambda\in \wl_\cl$ such that (i) $\langle h_i,\lambda \rangle \ge 0$ for all $i \in I_0$,
(ii) all the weight of $L$ are contained in $\lambda - \sum_{i \in I_0} \Z_{\ge 0} \cl(\alpha_i)$, where $\cl\colon \wl\to \wl_\cl$ is the canonical projection.
Such a $\la$ is unique and $u$ is unique up to a constant multiple. We call $\lambda$ the {\it dominant extremal weight} of $L$ and $u$ a {\it dominant extremal weight vector} of $L$.

Let $\wl_\cl^0\seteq\{\la\in\wl_\cl\mid \ang{c,\la}=0\}$. For each $i \in I_0$, we set 
$$
\varpi_i \seteq {\rm gcd}(\mathsf{c}_0,\mathsf{c}_i)^{-1}\cl(\mathsf{c}_0\Lambda_i-\mathsf{c}_i \Lambda_0) \in \wl_\cl^0,
$$
where the central element $c$ is equal to $ \sum_{ i\in I} \mathsf{c}_i h_i$.
Note that $\wl_\cl^0= \soplus_{i\in I_0}\Z\varpi_i$. For any $i\in I_0$, there
exists a unique simple module $V(\varpi_i)$ in $\uqm$ satisfying certain good conditions (see \cite[Section 5.2]{Kas02}), which is called the \emph{$i$-th fundamental representation}. 
Note that the dominant extremal weight of $V(\varpi_i)$ is $\varpi_i$.

For simple modules $M$ and $N$ in $\uqm$,
we say that $M$ and $N$ {\em commute}
or $M$ commutes with $N$ if $M\tens N\simeq N\tens M$.
We say that $M$ and $N$ \emph{strongly commute} or $M$ \emph{strongly commutes with} $N$ if $M\tens N$ is simple.
Note that $M\tens N$ is simple if and only if $N\tens M$ is simple since $K(\uqm)$ is a commutative ring.
It is clear that, if simple modules $M$ and $N$ strongly commute, then they commute.
We say that a simple module $M$ is \emph{real} if $M$ strongly commutes with itself. 

For an integrable $U_q'(\g)$-module $M$, we denote by $M_z$ the {\it affinization} of $M$ and 
by $z_M \colon M_z \to M_z$ the $U_q'(\g)$-module automorphism of weight $\updelta$. Note that $M_z \simeq \cor[z^{\pm 1}]\otimes_{\cor} M $ for an indeterminate $z$ as a $\cor$-vector space.
 For $x \in \ko^\times$, we define
$$M_x \seteq M_z / (z_M -x)M_z.$$
We call $x$ a {\it spectral parameter}.
The functor $T_x$ defined by $T_x(M)=M_x$ is an endofunctor of $\uqm$
which commutes with tensor products (see \cite[Section 4.2]{Kas02} for details).

It is known that a fundamental representation is a \emph{good module}, which is a simple $U_q'(\g)$-module with good properties including a {\em bar involution}, a crystal basis with {\em simple
	crystal graph}, and a {\em global basis} (see~\cite{Kas02} for the precise definition).
We say that  a $U_q'(\g)$-module $M$ is {\em quasi-good} if
$$ M \simeq V_c $$
for some good module $V$ and $c \in \cor^\times$.
Note that any quasi-good module is a simple $U_q'(\g)$-module. Moreover the tensor product $M^{\otimes k} \seteq \underbrace{M \tens \cdots \tens M}_{k\text{-times}}$ for a quasi-good module $M$ and $k \in \Z_{\ge 1}$
is again quasi-good.

For a $U_q'(\g)$-module $M$, we denote by $\overline{M}=\{ \bar{u} \mid u \in M \}$
the $U_q'(\g)$-module defined by $x \bar{u} \seteq \overline{\ms{2mu}\overline{x} u\ms{2mu}}$ for $x \in U_q'(\g)$.
Then we have
\begin{align*} 
\overline{M_a} \simeq (\overline{M})_{\,\overline{a}} \quad \text{and} \quad \overline{M \otimes N} \simeq \overline{N} \otimes \overline{M}\qt{  for any $M,N\in\uqm$ and $a\in\cor^\times$. }
\end{align*}
Note that $V(\varpi_i)$ is \emph{bar-invariant}; i.e., $\overline{V(\varpi_i)}\simeq V(\varpi_i)$ (see \cite[Appendix A]{AK}).

Let $m_i$ be a positive integer such that
$$
\W\pi_i\cap\bigl(\pi_i+\Z\updelta\bigr)=\pi_i+\Z m_i\updelta,
$$
where $\pi_i$ is an element of $\wl$ such that $\cl(\pi_i)=\varpi_i$.
Note that $m_i=(\al_i,\al_i)/2$ in the case when $\g$ is the dual of an untwisted affine algebra, and $m_i=1$ otherwise.
Then, for $x,y\in \ko^\times$, we have (see \cite[Section 1.3]{AK})
\begin{align} 
V(\varpi_i)_x \simeq V(\varpi_i)_y  
\quad \text{if and only if \quad $x^{m_i}=y^{m_i}$.}\label{eq:mi}
\end{align}
We set
\eq
 \sig \seteq I_0 \times \cor^\times / \sim, 
\label{eq:sig}
\eneq
where the equivalence relation is given by 
\begin{align} \label{Eq: equiv on I_0 k} 
 (i,x) \sim (j,y) \Longleftrightarrow
V(\varpi_i)_x \simeq V(\varpi_j)_y
 \Longleftrightarrow\text{$i=j$ and $x^{m_i}=y^{m_j}$.}
\end{align}
We denote by $[(i,a)]$ the equivalence class of $(i,a)$ in $ \sig$.  When no confusion arises, we simply write $(i,a)$ for the equivalence class $[(i,a)]$.

\medskip
The monoidal category $\Ca_\g$ is rigid.  For $M\in \uqm$, we denote by  $\rd M$ and $M^*$ the right and the left dual of $M$, respectively.
We set 
\begin{align} \label{Eq: p* tilde p}
p^* \seteq (-1)^{ \langle \rho^\vee, \delta \rangle} q^{\langle c, \rho \rangle} \quad \text{and}\quad  \tilde{p} \seteq (p^*)^2 = q^{2 \langle c, \rho \rangle}.
\end{align}
The integer $\langle \rho^\vee, \delta \rangle$ is called
the \emph{Coxeter number} and  $\langle c, \rho \rangle $ is called
the \emph{dual Coxeter number} (See \cite[Chapter 6]{Kac}).
We write $p^*$ for all types for the reader's convenience. 
\renewcommand{\arraystretch}{1.5}
\begin{align} \label{Table: p*} \small
\begin{array}{|c||c|c|c|c|c|c|c|}
\hline
\text{Type of $\g$} & A_n^{(1)} & B_n^{(1)}& C_n^{(1)}& D_n^{(1)} & A_{2n}^{(2)} & A_{2n-1}^{(2)} & D_{n+1}^{(2)}  \\
&(n\ge1)&(n\ge2)&(n\ge3)&(n\ge4)&(n\ge1)&(n\ge2)&(n\ge3)\\
\hline
 p^*  & (-q)^{n+1} & q^{2n-1}    & q^{n+1}   &  q^{2n-2} & -q^{2n+1} & -q^{2n} & (-1)^{n+1} q^{2n}  \\
\hline
\hline
\text{Type of $\g$} & E_6^{(1)}  & E_7^{(1)} & E_8^{(1)} & F_4^{(1)} & G_{2}^{(1)} & E_{6}^{(2)} & D_{4}^{(3)}  \\
\hline
p^* & q^{12} & q^{18}    & q^{30}   & q^9 & q^4 & -q^{12} & q^6  \\
\hline
\end{array}
\end{align}
Then, for any $M \in \uqm$, we have
$$ 
 M^{**} \simeq M_{ (\tilde{p})^{-1} }, \qquad    {}^{**}\ms{-1mu}M \simeq M_{\tilde{p} }.
$$
and for $i\in I_0$ and $x\in \cor^\times$, 
\begin{equation}\label{eq: dual}
\bigl( V(\varpi_i)_x\bigr)^*  \simeq   V(\varpi_{i^*})_{(p^*)^{-1}x},
\qquad {}^*\bigl(V(\varpi_i)_x \bigr) \simeq   V(\varpi_{i^*})_{p^*x},
\end{equation}
where  $i^*\in I_0$ is defined by  $\al_{i^*}=-w_0\,\al_i$ (see \cite[Appendix A]{AK}).
Note that the involution $i \mapsto i^*$ is the identity for all types except type $A_n, D_n, E_6$, which are given as follows:
\bna
\item (Type $A_n$)   $i^* =  n+1-i $, 
\item (Type $D_n$) 
 $i^* =
\begin{cases}
n-(1-\epsilon)& \text{ if  $n$ is odd and $i=n-\epsilon$ ($\epsilon=0,1$)},  \\
i& \text{ otherwise,}
\end{cases}
 $
\item (Type $E_6$)  The map $i \mapsto i^*$ is determined by 
$$i^* =
\begin{cases}
6 & \text{ if } i=1,  \\
i & \text{ if } i=2,4,  \\
5 & \text{ if } i=3,  
\end{cases}
 $$
where the Dynkin diagram of type $E_6$ is given in Appendix~\ref{App: denominators}~\eqref{Eq: DQ}. 
\ee

\subsection{R-matrices} \label{subsec:R}

We recall the notion of R-matrices on $U_q'(\g)$-modules and their coefficients (see \cite{D86}, and also 
\cite[Appendices A and B]{AK} and \cite[Section 8]{Kas02} for details).
Choose a basis $\{P_\nu\}_\nu$ of $U_q^+(\g)$ and a basis $\{Q_\nu\}_\nu$ of $U_q^-(\g)$ dual to each other with respect to a suitable coupling between
$U_q^+(\g)$ and $U_q^-(\g)$. For $\uqpg$-modules $M$ and $N$, we define
\begin{align*}
\Runiv_{M,N}(u\otimes v) \seteq q^{(\wt(u),\wt(v))} \sum_\nu P_\nu v\otimes Q_\nu u
\qt{  for $u\in M$ and $v\in N$,}
\end{align*}
so that
$\Runiv_{M,N}$ gives a $\uqpg$-linear homomorphism $M\otimes N \rightarrow N\otimes M$, called the \emph{universal R-matrix}, 
provided that the infinite sum has a meaning.
As $\Runiv_{M,N_z}$ converges in the $z$-adic topology for 
$ M,N\in \Ca_\g$, we have
a morphism of
$\ko((z))\tens\uqpg$-modules
\begin{align*} 
\Runiv_{M,N_z} \colon \ko((z))\tens_{\ko[z^{\pm1}]} (M \tens N_z) \To \ko((z))\tens_{\ko[z^{\pm1}]} (N_z\tens M).
\end{align*}
Note that  $\Runiv_{M,N_z}$ is an isomorphism.

Let $M$ and $N$ be non-zero modules in $\uqm$.
The universal R-matrix  $\Runiv_{M,N_z}$ is \emph{rationally renormalizable}
if there exists $f(z) \in \ko((z))^\times$ 
such that
$$f(z) \Runiv_{M,N_z}\bl M\tens N_z\br\subset N_z\tens M. $$
In this case, we can choose
$c_{M,N}(z) \in \ko((z))^\times$ such that,
 for any $x \in \ko^\times$, the specialization of $\Rren_{M,N_z} \seteq c_{M,N}(z)\Runiv_{M,N_z}
\col M \otimes N_z \to N_z \otimes M$ at $z=x$
$$  \Rren_{M,N_z}\big\vert_{z=x} \colon M \otimes N_x  \to N_x \otimes M$$
does not vanish.  Note that $\Rren_{M,N_z}$ and $c_{M,N}(z)$ are unique up to a multiple of $\cz^\times = \bigsqcup_{\,n \in \Z}\ko^\times z^n$. 
We call $c_{M,N}(z)$  the \emph{renormalizing coefficient}.
We denote by $\rmat{M,N}$ the specialization at $z=1$ 
\eq
\rmat{M,N} \seteq \Rren_{M,N_z}\vert_{z=1} \colon M \otimes N \to N \otimes M,
\label{eq:rmat}
\eneq
and call it the \emph{R-matrix}. The  R-matrix $\rmat{M,N}$ is well-defined up to a constant multiple whenever $\Runiv_{M,N_z}$ is \rr. By the definition, $\rmat{M,N}$ never vanishes.

\medskip
Suppose that $M$ and $N$ are simple $\uqpg$-modules in $\uqm$.
Let $u$ and $v$ be dominant extremal weight vectors of $M$ and $N$,
respectively. 
Then there exists $a_{M,N}(z)\in\ko[[z]]^\times$
such that
$$
\Runiv_{M,N_z}\big( u \tens v_z\big)= a_{M,N}(z)\big( v_z\tens u \big).
$$
Thus we have  a unique $\ko(z)\tens\uqpg$-module isomorphism
\begin{align*}
 \Rnorm_{M,N_z}\seteq a_{M,N}(z)^{-1} &  \Runiv_{M,N_z}\big\vert_{\;\ko(z)\otimes_{\ko[z^{\pm1}]} ( M \otimes N_z) } 
\end{align*}
from $\ko(z)\otimes_{\ko[z^{\pm1}]} \big( M \otimes N_z\big)$ to $\ko(z)\otimes_{\ko[z^{\pm1}]}  \big( N_z \otimes M \big)$, which
satisfies
\begin{equation*}
\Rnorm_{M, N_z}\big( u  \otimes v_z\big) = v_z\otimes u .
\end{equation*}

We call $a_{M,N}(z)$ the {\it universal coefficient} of $M$ and $N$, and $\Rnorm_{M,N_z}$ the {\em normalized $R$-matrix}.

Let $d_{M,N}(z) \in \ko[z]$ be a monic polynomial of the smallest degree such that the image of $d_{M,N}(z)
\Rnorm_{M,N_z}(M\tens N_z)$ is contained in $N_z \otimes M$, which is called the {\em denominator of $\Rnorm_{M,N_z}$}. 
Then we have
\begin{equation*}
\Rren_{M,N_z}  =  d_{M,N}(z)\Rnorm_{M,N_z}
\col M \otimes N_z \To N_z \otimes M
\qt{up to a multiple of $\cz^\times$.}
\end{equation*}
Thus
\begin{align*}
 \Rren_{M,N_z} =a_{M,N}(z)^{-1}d_{M,N}(z)\Runiv_{M,N_z}
\quad \text{and} \quad  c_{M,N}(z)= \dfrac{d_{M,N}(z)}{a_{M,N}(z)}
\end{align*}
up to a multiple of $\ko[z^{\pm1}]^\times$.
In particular, $\Runiv_{M,N_z}$ is \rr whenever $M$ and $N$ are simple. 

The denominator formulas between fundamental representations are
recollected for all types 
in Appendix \ref{App: denominators}.

The following theorem follows from the results of \cite{AK,Chari,Kas02,KKKO15}.
In the theorem, (ii) follows essentially from \cite[Corollary 3.16]{KKKO15} with properties of R-matrices (see also \cite[Proposition 3.16, Corollary 3.17]{KKOP19C}), and (i), (iii), (iv) were conjectured in \cite[Section 2]{AK} and proved in \cite[Section 4]{AK} for affine type $A$ and $C$,   in \cite[Section 9]{Kas02} for general cases in terms of good modules, and in \cite[Section 4 and Section 6]{Chari} using the braid group actions.

\begin{theorem}[{\cite{AK,Chari,Kas02,KKKO15}}]  \label{Thm: basic properties}
	\hfill
	\bnum
	\item For good modules $M$ and $N$, the zeroes of $d_{M,N}(z)$ belong to
	$\C[[q^{1/m}]]q^{1/m}$ for some $m\in\Z_{>0}$. 
	\item \label{it:comm} For simple modules $M$ and $N$ such that one of them is real, $M_x$ and $N_y$ strongly commute to each other if and only if $d_{M,N}(z)d_{N,M}(1/z)$ does not vanish at $z=y/x$.
	\item  Let $M_k$ be a good module
	with a dominant extremal vector $u_k$ of weight $\lambda_k$, and
	$a_k\in\ko^\times$ for $k=1,\ldots, t$.
	Assume that $a_j/a_i$ is not a zero of $d_{M_i, M_j}(z) $ for any
	$1\le i<j\le t$. Then the following statements hold.
	\bna
	\item  $(M_1)_{a_1}\otimes\cdots\otimes (M_t)_{a_t}$ is generated by $u_1\otimes\cdots \otimes u_t$.
	\item The head of $(M_1)_{a_1}\otimes\cdots\otimes (M_t)_{a_t}$ is simple.
	\item Any non-zero submodule of $(M_t)_{a_t}\otimes\cdots\otimes (M_1)_{a_1}$ contains the vector $u_t\otimes\cdots\otimes u_1$.
	\item The socle of $(M_t)_{a_t}\otimes\cdots\otimes (M_1)_{a_1}$ is simple.
	\item  Let $\rmat{}\col (M_1)_{a_1}\otimes\cdots\otimes (M_t)_{a_t} \to (M_t)_{a_t}\otimes\cdots\otimes (M_1)_{a_1}$  be the specialization of 
	$\rmat{M_1,\ldots, M_t}\seteq\prod\limits_{1\le j<k\le t}\rmat{M_j,\,M_k}$ at $z_k=a_k$.
	 \ro see \eqref{eq:rmat}\rf.
	Then the image of $\rmat{}$ is simple and it coincides with the head of $(M_1)_{a_1}\otimes\cdots\otimes (M_t)_{a_t}$
	and also with the socle of $(M_t)_{a_t}\otimes\cdots\otimes (M_1)_{a_1}$.
\end{enumerate}
\item\label{Thm: bp5}
For any simple integrable $U_q'(\g)$-module $M$, there exists
a finite sequence 
in  $\sig$  \ro see \eqref{eq:sig}\rf
such that
$M$ 
has $\sum_{k=1}^t \varpi_{i_k}$ as a dominant extremal weight
and it is isomorphic to a simple subquotient of
$V(\vpi_{i_1})_{a_1}\tens\cdots V(\vpi_{i_t})_{a_t}$.
Moreover, such a sequence $\big((i_1,a_1),\ldots, (i_t,a_t)\big)$
is unique up to a permutation.

We call
$\sum_{k=1}^t(i_k,a_k)\in\Z^{\oplus \sig}$ the {\em \afwt}
of $M$. 
\end{enumerate}
\end{theorem}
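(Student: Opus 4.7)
The plan is to assemble this omnibus theorem from its four sources, treating (iii) as the backbone from which (ii) will follow as the two-factor case. Parts (i) and (iv) ultimately rest on the integral crystal-basis and loop-highest-weight structure of good modules established in \cite{AK, Kas02}, while (ii) and (iii) come from the R-matrix analysis on tensor products of good modules developed in \cite{KKKO15} (with partial precursors in \cite{Chari}).

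I would first prove (iii). The hypothesis that $a_j/a_i$ is not a zero of $d_{M_i,M_j}(z)$ is exactly what guarantees that each pairwise R-matrix $\rmat{(M_i)_{a_i},(M_j)_{a_j}}$ is well-defined and non-zero on the specialization, so the composite $\rmat{}$ appearing in (iii)(e) is a non-zero morphism. For (a), I would verify that $u_1\otimes\cdots\otimes u_t$ is a dominant extremal weight vector whose cyclic module is the whole tensor product, using that good modules are generated by their dominant extremal vectors and that the pairwise R-matrix assumption propagates cyclicity across factors. Parts (b)--(e) then follow simultaneously by induction on $t$, the essential input being the simplicity of the image of $\rmat{}$ as established in \cite[Corollary 3.16]{KKKO15}; uniqueness of the simple head in (b) and the simple socle in (d) comes from the uniqueness (up to scalar) of the dominant extremal weight vector in the codomain, together with part (a) applied in the reversed ordering. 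Once (iii) is in hand, (ii) is the specialization to $t=2$: for the ``if'' direction, the non-vanishing of $d_{M,N}(y/x)d_{N,M}(x/y)$ turns the two R-matrices into mutually inverse isomorphisms up to scalar, so the head produced by (iii)(e) coincides with $M_x\tens N_y$ itself, giving simplicity; for the ``only if'' direction, the realness hypothesis on $M$ or $N$ is used to upgrade simplicity of $M_x\tens N_y$ to invertibility of the R-matrix, and hence to non-vanishing of the denominators, following the argument of \cite[Corollary 3.16]{KKKO15}.

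For (i), I would invoke the integral structure of good modules: a global basis defined over $\C[q^{\pm 1/m}]$ makes the matrix coefficients of $\Runiv_{M,N_z}$ elements of $\C(q^{1/m})[[z]]$, and after normalizing by $a_{M,N}(z)$ at the dominant vectors and multiplying by the minimal monic polynomial $d_{M,N}(z)$, the zeros of $d_{M,N}(z)$ must lie in $\C[[q^{1/m}]]q^{1/m}$ by tracking the spectral parameters where the normalized R-matrix loses regularity. For (iv), existence is produced by expanding the dominant extremal weight of $M$ into a non-negative integral combination $\sum_{k=1}^t\varpi_{i_k}$ and then choosing spectral parameters $a_k$ recursively so that $M$ appears as a simple subquotient of $V(\vpi_{i_1})_{a_1}\tens\cdots\tens V(\vpi_{i_t})_{a_t}$; uniqueness comes from recovering the multiset $\{(i_k,a_k)\}$ from the loop-weight spectrum (equivalently, the Drinfeld polynomials) of $M$, which is an invariant of its isomorphism class.

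The step I expect to be the main obstacle is the ``only if'' direction of (ii): the realness hypothesis is essential and must be used to convert simplicity of $M_x\tens N_y$ into invertibility of the specialized R-matrix, and the argument of \cite{KKKO15} must be imported with care under the conventions on affinizations and spectral parameters used here. The remaining assertions are essentially a careful bookkeeping exercise combining the crystal-basis properties of good modules with the head/socle structure of tensor products coming from R-matrices.
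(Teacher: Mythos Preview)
The paper does not give a proof of this theorem: it is a survey statement, and the paragraph immediately preceding it records the precise attributions.  According to the paper, (ii) follows from \cite[Corollary 3.16]{KKKO15} (see also \cite[Proposition 3.16, Corollary 3.17]{KKOP19C}), while (i), (iii) and (iv) were conjectured in \cite[\S2]{AK} and proved in \cite[\S4]{AK} for types $A$ and $C$, in \cite[\S9]{Kas02} in general via good modules, and in \cite[\S\S4,6]{Chari} via braid group actions.  Your attribution of (iii) to \cite{KKKO15} is therefore off: the cyclicity and head/socle results for tensor products of good modules predate \cite{KKKO15} and do not use the notion of a real module.

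More importantly, there is a genuine gap in your derivation of (ii) from (iii).  Part (iii) is stated only for \emph{good} modules, whereas (ii) concerns arbitrary simple modules $M,N$ with one of them real; you cannot simply specialize (iii) to $t=2$.  Even setting this aside, your argument for the ``if'' direction is incomplete: non-vanishing of $d_{M,N}(y/x)d_{N,M}(x/y)$ makes the two specialized normalized R-matrices mutually inverse, so $M_x\tens N_y\simeq N_y\tens M_x$, but an isomorphism between the two orderings does not by itself force simplicity of the tensor product.  The realness hypothesis is needed in \emph{both} directions of (ii), and the actual argument (via \cite{KKKO15}) passes through the invariant $\de(M,N)$: one shows $\de(M,N)=\zero_{z=1}\bigl(d_{M,N}(z)d_{N,M}(z^{-1})\bigr)$ and, when one of $M,N$ is real, that $M$ and $N$ strongly commute if and only if $\de(M,N)=0$ (cf.\ Proposition~\ref{Prop: d and d} and Corollary~\ref{Cor: comm} in this paper).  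Your sketches for (i) and (iv) are in the right spirit and consistent with the cited sources.
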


\subsection{Hernandez-Leclerc categories} \label{Sec: HL cat}
Recall $\sig$ in \eqref{eq:sig}.
For $(i,x)$ and $ (j,y) \in \sig$, we put $d$ many arrows  from $(i,x)$ to $(j,y)$, where $d$ is the order of the zeros of $d_{ V(\varpi_i), V(\varpi_j) } \allowbreak ( z_{V(\varpi_j)} / z_{V(\varpi_i)}  )$
at $  z_{V(\varpi_j)} / z_{V(\varpi_i)}  = y/x$.
Then $\sig$ has a quiver structure.  Note that $(i, x)$ and $(j,y)$  are linked in $ \sig$  if and only if 
the tensor product $ V(\varpi_i)_x \otimes V(\varpi_j)_y$ is reducible (\cite[Corollary 2.4]{AK}).
The denominator formulas are explicitly given in Appendix \ref{App: denominators}.

We choose a connected component $\sigZ$ of $\sig$. Since 
a connected component of $\sig$ is unique up to a spectral parameter shift, 
$\sigZ$ is uniquely determined up to a quiver isomorphism.
We set
\eq
q_s=q^{1/2}\qtq q_t=q^{1/3}.
\eneq
The \emph{distance} $\dist(u,v)$ between two vertices $u$ and $v$ in a finite Dynkin diagram is the length of  the  path connecting them.
For example, $\dist(1,4)=2$ in Dynkin diagram of type $D_4$, and $ \dist(1,3)=2$ in Dynkin diagram of type $F_4$. 
We denote by  $\dd(i,j)$ the distance between $i$ and $j$ in the Dynkin diagram of $\g_0$.
For the rest of this paper,  we take the following choices of $\sigma_0(\g)$
(see Table~\eqref{Table: p*} for the range of $n$): 
\begin{align*}
	&\sigma_0(X) \seteq \{(i,(-q)^{p}) \in I_0 \times \ko^{\times} \ | \ p \equiv_2 \dd(1,i)  \} \ \ ( X=A^{(1)}_{n}, D^{(1)}_{n}, E^{(1)}_{k} (k=6,7,8) ),  \allowdisplaybreaks  \\
	& \sigma_0({B_{n}^{(1)}}) \seteq \set{ (i,(-1)^{ n+i}  q_s q^m),  (n,q^m)}{1\le i \le n-1,\  m \in \Z}, \allowdisplaybreaks\\
	&\sigma_0(C^{(1)}_{n}) \seteq \{(i,(-q_s)^{p}) \in  I_0 \times \ko^{\times} \ | \ p \equiv_2 \dd(1,i)  \}, \allowdisplaybreaks  \\
	&\sigma_0(F^{(1)}_{4}) \seteq \{(i,(-1)^iq_s^{2p-\delta_{i,3}})  \in  I_0 \times \ko^{\times} \ |  \ p \in \Z \}, \allowdisplaybreaks  \\
	&\sigma_0(G^{(1)}_{2}) \seteq \{(i,(-q_t)^{p}) \in  I_0 \times \ko^{\times} \ | \ p \equiv_2 \dd(2,  i )  \}, \allowdisplaybreaks  \\
	&\sigma_0(A^{(2)}_{2n}) \seteq \{(i,(-q)^{p}) \in  I_0 \times \ko^{\times}
	\ | \ p \in \Z \},  \allowdisplaybreaks  \\
	&\sigma_0(A^{(2)}_{2n-1}) \seteq \{( i,\pm(-q)^{p}), ( n, (-q)^{r})  | \ 1 \le i  < n, \ p \equiv_2 i+1, \ r \equiv_2 n+1   \}, \allowdisplaybreaks  \\
	& \sigma_0(D^{(2)}_{n+1}) \seteq \{ (i, (\sqrt{-1}^{n+1-i}) ( -q)^p), \ (n, \pm ( -q)^{r}) \ | \ 1\le i < n,\ p \equiv_2 i+1 ,\ r \equiv_2 n+1  \}, \allowdisplaybreaks\\
	& \sigma_0(E^{(2)}_{6}) \seteq \{ (i,\pm q^r), \ (i', \sqrt{-1} (-q )^{r'}) \ | \  i \in \{ 1,2 \},  i' \in \{3,4\}, \  r \equiv_2 i+1, r'\equiv_2 i'+1 \},   \allowdisplaybreaks\\
	& \sigma_0(D^{(3)}_{4}) \seteq \{ (1, q^r),(1, \omega q^r), (1, \omega^2 q^r), (2,-q^{r+1}) \ | \ r\equiv_2 0  \} \ (\omega^2+\omega+1=0),
\end{align*}
where $a \equiv_2 b$ means that $a\equiv b\mod 2$ (see \cite[Section 3.7]{HL10}, \cite[Section 4.1]{KKKO15D}, \cite[Section 6]{KO18} and  \cite[Section 6]{OS19}). 
Note that, in \cite[Section 6]{OS19}, the category $\CaQ$ and $\sigQ$ were dealt only for exceptional cases, but it is easy to obtain $\sigZ$ using $\sigQ$.
We use the notation $B_2^{(1)}$ and $A^{(2)}_{3} $ instead of $C_2^{(1)}$ and $D_{3}^{(2)}$ respectively.
Here we use the standard convention for Dynkin diagrams appeared in \cite[Chapter 4]{Kac} except $A^{(2)}_{2n}$, $A_3^{(2)}$, $B_2^{(1)}$ and $E^{(1)}_{k}$ ($k=6,7,8$), which are given in $\eqref{Eq: DD}$. 

We define $\Ca_\g^0$ to be the smallest full subcategory of $\Ca_\g$ such that 
\bna
\item $\Ca_\g^0$ contains $V(\varpi_i)_x$ for all $(i,x) \in \sigZ$,
\item $\Ca_\g^0$ is stable by taking subquotients, extensions and tensor products.
\ee
For symmetric affine types, this category was introduced in \cite{HL10}. 
Note that every simple module in $\Ca_\g$ is isomorphic to a tensor product of certain spectral parameter shifts  
of some simple modules in $\Ca_\g^0$ (\cite[Section 3.7]{HL10}).

\subsection{The categories $\CaQ$} \label{Sec: CaQ} \
In this subsection, we recall  very briefly 
a certain subcategory $\CaQ$ of $\Ca_\g$ categorifying the coordinate ring $\C[N]$ of the maximal unipotent group $N$
associated with a certain simple Lie algebra.

This subcategory $\CaQ$ was introduced in \cite{HL15} for simply-laced affine type $ADE$, in \cite{KKKO15D} for twisted affine type $A^{(2)}$ and $D^{(2)}$, 
in  \cite{KO18, OhSuh19} for untwisted affine type $B^{(1)}$ and $C^{(1)}$, and in \cite{OS19} for exceptional affine type.
The quantum affine Schur-Weyl duality functor between the finite-dimensional module category of a quiver Hecke algebra and $\CaQ$ was also constructed in \cite{KKK15B} for untwisted affine type $A^{(1)}$ and $D^{(1)}$, 
in \cite{KKKO15D} for twisted affine type $A^{(2)}$ and $D^{(2)}$, 
in \cite{KO18} for untwisted affine type $B^{(1)}$ and $C^{(1)}$, 
in \cite{OS19} for exceptional affine type, 
and in \cite{Fu18} for simply-laced affine type $ADE$ in a geometric manner.

We shall describe $\sigQ$ and $\CaQ$ by using \emph{ $Q$-data} (\cite{FO20}).
 A $Q$-datum generalizes a Dynkin quiver with a height function, which gives a uniform way to describe the Hernandez-Leclerc category $\CaQ$.
We explain briefly following \cite[Section 3]{FO20} (see also \cite[Section 4]{FHOO21} and \cite[Section 6]{KKOP21}). 
Let $\g$ be an affine Kac-Moody algebra and let $\gf$ be the simply-laced finite type Lie algebra 
corresponding to the affine type of $\g$ 
in Table $\eqref{Table: root system}$. 
Let $\If$ be the index set of $\gf$ and let $\Dynkinf$ be the Dynkin diagram for $\gf$. 

We first assume that $\g$ is of untwisted type. We define an Dynkin diagram automorphism $\varrho $ of $\Dynkinf$ as follows.
For $\g=A_n^{(1)}, D_n^{(1)}, E_k^{(1)}$ type ($k=6,7,8$), we set $\varrho :={\rm id}$, and 
for the remaining types,
$\varrho$ is defined as follows (see \cite[Section 3.1]{FO20}).  
{ \footnotesize
	\begin{align*} 
		\hs{-1ex} & \text{ $B_n^{(1)}$-type: }  \big( \Dynkinf: \xymatrix@R=0.5ex@C=3ex{ *{\circ}<3pt>
			\ar@{-}[r]_<{1 \ \ }  &*{\circ}<3pt> \ar@{-}[r]_<{2 \ \ }  &   {}
			\ar@{.}[r] & *{\circ}<3pt> \ar@{-}[r]_>{\,\,\,\  _{2n-2} }
			&*{\circ}<3pt>\ar@{-}[r]_>{\,\,\,\,  _{2n-1} } &*{\circ}<3pt> }, \
		\varrho (k) = 2n-k \big)\hs{-.5ex} \Longrightarrow \Dynkin_{B_{n}}\hs{-.5ex}:\hs{-.5ex}
		\xymatrix@R=0.5ex@C=3ex{ *{\circ}<3pt> \ar@{-}[r]_<{ 1 \ \ }
			&*{\circ}<3pt> \ar@{-}[r]_<{2 \ \ }  &   {} \ar@{.}[r] &
			*{\circ}<3pt> \ar@{}_>{ _{n-1}} &*{\circ}<3pt>\ar@{<=}[l]^>{ \qquad
				\ \  \; \ _{n}}  },
		\allowdisplaybreaks\\
		&\text{ $C_n^{(1)}$-type:}
		\hs{-.4ex} \left(\hs{-1ex}   \ 
		\Dynkinf \hspace{-.5ex} :
		\hspace{-.5ex}\raisebox{1em}{\xymatrix@R=0.5ex@C=2.5ex{
				& & &  *{\circ}<3pt>\ar@{-}[dl]^<{ \  _{n}} \\
				*{\circ}<3pt> \ar@{-}[r]_<{1\ \ }  &*{\circ}<3pt>
				\ar@{.}[r]_<{2 \ \ } & *{\circ}<3pt> \ar@{.}[l]^<{ \ \  _{n-1}}  \\
				& & &   *{\circ}<3pt>\ar@{-}[ul]^<{\quad \ \  _{n+1}}   \\
		}} \hspace{-1ex}   ,\hs{1ex}     \varrho( k) \hspace{-.5ex}  =
		\hspace{-.5ex} \begin{cases} k & \text{ if } k \le n-1, \\ n+1 &
			\text{ if } k = n, \\ n & \text{ if } k = n+1 \end{cases} \right)\hs{-.5ex}
		\Longrightarrow \Dynkin_{C_{n}}\hs{-.5ex}:\hs{-.5ex} \xymatrix@R=0.5ex@C=3ex{
			*{\circ}<3pt> \ar@{-}[r]_<{1 \ \ }  &*{\circ}<3pt> \ar@{-}[r]_<{ 2 \
				\ }  &   {}
			\ar@{.}[r] & *{\circ}<3pt> \ar@{}_>{ _{n-1} } &*{\circ}<3pt>\ar@{=>}[l]^>{ \qquad \ \  \; \ _{ n } }  },
		\allowdisplaybreaks\\
		&\text{ $F_4^{(1)}$-type: } 
		\left(
		\Dynkinf: \raisebox{2em}{\xymatrix@R=3ex@C=3ex{
				& & *{\circ}<3pt>\ar@{-}[d]_<{\quad \ \  2} \\
				*{\circ}<3pt> \ar@{-}[r]_<{1 \ \ }  & *{\circ}<3pt> \ar@{-}[r]_<{3 \
					\ }  & *{\circ}<3pt> \ar@{-}[r]_<{4 \ \ }  & *{\circ}<3pt>
				\ar@{-}[r]_<{5 \ \ }  & *{\circ}<3pt> \ar@{-}[l]^<{ \ \ 6 }
		}},\hs{.5ex} \begin{cases} \varrho( 1)=6, \ \varrho( 6)=1, \\ \varrho( 3)=5,
			\ \varrho( 5)=3, \\ \varrho( 4)=4, \ \varrho( 2)=2 \end{cases}  \right)
		\Longrightarrow \Dynkin_{F_{4}}: \xymatrix@R=0.5ex@C=3ex{
			*{\circ}<3pt> \ar@{-}[r]_<{ 1 \ \ }  &*{\circ}<3pt> \ar@{}[r]_<{2 \
				\ }  &  *{\circ}<3pt> \ar@{<=}[l]^<{ \ \ 3 } &
			*{\circ}<3pt>\ar@{-}[l]^>{ \qquad \ \  \; \ 4}  },
		\allowdisplaybreaks\\
		&\text{ $G_2^{(1)}$-type: } 
		\left(  \Dynkinf:  \raisebox{1em}{
			\xymatrix@R=0.5ex@C=3ex{
				& &   *{\circ}<3pt>\ar@{-}[dl]^<{ \ 3} \\
				*{\circ}<3pt> \ar@{-}[r]_<{1 \ \ }  &*{\circ}<3pt>
				\ar@{-}[l]^<{2 \ \ }   \\
				& &    *{\circ}<3pt>\ar@{-}[ul]^<{\quad \ \  4} \\
		}}, \ \begin{cases} \varrho( 1)=3, \ \varrho( 3)=4, \ \varrho( 4)=1, \\
			\varrho( 2)=2 \end{cases} \right)  \Longrightarrow\Dynkin_{G_{2}}:
		\xymatrix@R=0.5ex@C=3ex{ *{\circ}<3pt> \ar@{-}[r]_<{ 1 \ \ }
			&*{\circ}<3pt>
			\ar@{<=}[l]^<{ \ \ 2 }  }.
	\end{align*}
}

Let $I_0 =\{ 1,2,\ldots, n \}$ be the index set of
$\g_0$. Note that $\If = I_0$ when $\g=A_n^{(1)}, D_n^{(1)}, E_k^{(1)}$ ($k=6,7,8$). 
Let $\ord(\varrho)$ be the order of $\varrho$. 
For $i\in \If$, we denote by $\orb(i)$ the orbit of $i$ under the action $\varrho$, and set
$\fd_i  := | \orb(i) | $. 
We identify the set of orbits of $\If$ with $I_0$ by mapping $ \orb(i) \mapsto \min\{ \orb(i) \}$ for $\g \ne F_4^{(1)}$, and by mapping $ \orb(1) \mapsto 1 $, $ \orb(3) \mapsto 2 $, $ \orb(4) \mapsto 3 $ and $ \orb(2) \mapsto 4 $ for $\g = F_4^{(1)}$.
We write $\pi\col \If\to I_0$ for the
projection via this identification. 

\smallskip

\begin{definition} [{\cite[Definition 3.5]{FO20}}]	
	\label{def: height} A function $\xi \colon \If \to \Z$ is called a
	\emph{height function on $(\Dynkinf, \varrho)$} if the following two
	conditions are satisfied. \bnum
	\item \label{it:ht1}
	For any $i, j \in \If$ such that $\dist(i,j)=1$ and $\fd_i=
	\fd_j$, we have $|\xi_{i} - \xi_{j}| =\fd_i$.
	\item \label{it:ht2}
	For any $i,j \in \If$ such that $\dist(i,j)=1$ and $1=\fd_i <
	\fd_j=\ord(\varrho)$, there exists a unique element $j^{\circ} \in
	\orb(j)$ such that $|\xi_{i} - \xi_{j^{\circ}}| = 1$ and
	$\xi_{\varrho^{k}(j^{\circ})} = \xi_{j^{\circ}} - 2k$
	for any $0 \le k < \ord(\varrho)$.
\end{enumerate}
Here $\dist(i,j)$ denotes the distance between $i$ and $j$ in the Dynkin diagram $\Dynkinf$.
We call the triple $Q = (\Dynkinf, \varrho, \xi)$ a
\emph{$Q$-datum} for $\g$.
\end{definition}

For a $Q$-datum $Q = (\Dynkinf, \varrho, \xi)$ associated to $\g$, let
\begin{align*}
\hI_Q \seteq \{ ( i ,p) \in \If \times \Z \ | \ p -\xi_i \in 2\fd_i\Z\}.
\end{align*}
The \emph{generalized $\varrho$-Coxeter element} $\tau_Q \in \W_{\rm fin} \rtimes \textrm{Aut} (\Dynkinf)$ associated with $Q$ is defined in \cite[Definition 3.33]{FO20},  which can be understood as a generalization of a Coxeter element. Here $\W_{\rm fin}$ is the Weyl group of $\gf$.

For $i\in I_0$, we denote by $o(i)$ the corresponding orbit of $\If$. 
For each $i\in I_0$, we denote by $i^\circ $ the unique vertex in the orbit $o( i)$ satisfying $\xi_{i^\circ} = \max\{ \xi_j \mid j\in  o(i) \}$. 
 In this paper, we assume further that the height function $\xi$ satisfies:
 \begin{align} \label{Eq: ex cond}
 	\text{ $\xi_{\varrho^k(i^\circ)} = \xi_{i^\circ} - 2k$  for each $i\in I_0$ and $0 \le k < \fd_i $. }
 \end{align}
 Let $\{ i_1, i_2, \ldots, i_n\}$ be a total order of $I_0$ satisfying $\xi_{i_1^\circ} \ge \xi_{i_2^\circ}  \ge \cdots \ge \xi_{i_n^\circ} $.
Then we have
 $$
 \tau_Q = s_{i_1^\circ} s_{i_2^\circ} \cdots s_{i_n^\circ} \varrho \in \W_{\rm fin} \rtimes \textrm{Aut} (\Dynkinf)
 $$ 
(see \cite[Section 3.6]{FO20} and also \cite[Proposition 4.4]{FHOO21} for 
more details).

Let $\Df$ be the set of positive roots of $\gf$,
and let
$\widehat{\Phi} \seteq \Df \times \Z$. For each $i \in
\If$, we define
$$ 
\gamma^Q_i \seteq (1-\tau_Q^{\fd_i})\La_i \in \Df,
$$
 where $\La_i$ is the $i$-th fundamental weight of $\gf$. 
It is shown in \cite[Section 2.2]{HL15} and \cite[Theorem 3.35]{FO20} that there exists a unique
bijection $\psi_Q \col \hI_Q  \to \widehat{\Phi}$ defined
inductively as follows:
\begin{eqnarray*}&&
	\parbox{70ex}{
		\bnum
		\item $\psi_Q(i,\xi_i)=(\ga^Q_i,0)$,
		\item if $\psi_Q(i,p)=(\be,m)$, then we define
		\bna
		\item $\psi_Q(i,p\pm2\fd_i) = (\tau_Q^{\mp \fd_i} (\be),m) \qquad\qquad\quad$ if $ \tau_Q^{ \mp \fd_i}(\be) \in \Df$,
		\item $\psi_Q(i,p\pm 2\fd_i) = (-\tau_Q^{\mp \fd_i}(\be),m\pm 1)\qquad \ \ $ if $\tau_Q^{\mp \fd_i}(\be) \in - \Df$.
	\end{enumerate}
\end{enumerate}
}\label{eq: twisted bijection}
\end{eqnarray*}
Let 
$
I_Q\seteq \psi_Q^{-1}(\Df\times\{0\})\subset\If\times\Z.
$
Then one can describe 
$$
I_Q=\st{(i,p)\in\hI_Q\mid \xi_{ i^*} - \ord(\varrho)\mathsf{h}^\vee < p \le \xi_{i}},
$$
 where $\mathsf{h}^\vee$ is the dual Coxeter number of $\g_0$ 
(see \cite[Theorem 3.35]{FO20} and also see \cite[Proposition 4.15]{FHOO21}).
We define 
$$ 
\sigQ := \{ \esig(i,p ) \mid (i,p) \in I_Q \}, 
$$ 
where we set  $\qm \seteq q^{1/\ord(\varrho)}$ and  
\begin{align*} 
	\esig(i,p) := 
	\begin{cases}
		(\pi(i), (-\qm)^p) & \text{ if } \g = A^{(1)}_n, \; C_n^{(1)}, \; D^{(1)}_n, \; E_{6,7,8}^{(1)}, \; G_2^{(1)}, \\
		(\pi(i), (-1)^{i+n}(\qm)^p) & \text{ if } \g = B_n^{(1)}, \\
		(\pi(i), (-1)^{ \pi(i) }(\qm)^p) & \text{ if } \g = F_4^{(1)},
	\end{cases}
\end{align*}
(see \cite[Section 5.4]{FO20}). We define  
\begin{align} \label{Eq: phi}
\phi_Q\col \Df \isoto\sigQ
\end{align} 
by $ \phi_Q( \beta) :=  \esig \circ \psi_Q^{-1} (\beta, 0)$ for $\beta \in \Df$. The map $\phi_Q$ is bijective.

\smallskip

For the rest of this paper,  we take the following choices of $Q$-data:
\begin{itemize}
	\item for simple-laced $ADE$ type, $\ord(\varrho)=1$ and the height function $\xi$ is defined in Appendix \ref{App: ADE}, 
	\item for $\g=B_n^{(1)}$, $\ord(\varrho)=2$ and
	$ Q = \hspace{-2.5ex} \xymatrix@C=4ex@R=3ex{ *{ \circ }<3pt>
		\ar@{->}[r]^{ _{\ble{2n-3}} \qquad \quad}_<{1}  &*{\circ}<3pt>
		\ar@{->}[r]^{  _{\ble{2n-5}} \qquad \quad }_<{2}  &   {} \ar@{.}[r]&
		*{\circ}<3pt> \ar@{->}[r]^{ \ble{1} \qquad \ \  }_{ _{n-1} \quad \ \
			\ \ \ }&  *{\circ}<3pt> \ar@{->}[r]^{ \ble{0} \qquad \quad }_<{ _{n}
			\ \ } & *{\circ}<3pt> \ar@{-}[l]_{  \qquad \ \ \ble{-1}  }^{ \ \ \
			\quad\ \ _{n+1} }  & *{\circ}<3pt> \ar@{->}[l]_{  \qquad \ \ \ble{1}
		}^<{_{n+2}}
		{} \ar@{.}[r] &  *{\circ}<3pt> \ar@{<-}[r]^<{ _{\ble{2n-7}} \ \ }_<{ _{2n-2} }&  *{\circ}<3pt> \ar@{-}[l]_<{ \ \ _{\ble{2n-5}} \ \ }^<{ _{2n-1} } } \hs{-1ex}$,
	\item 
	for $\g=C_n^{(1)}$, $\ord(\varrho)=2$ and
	$ Q  =
	\raisebox{2.3em}{\xymatrix@C=3ex@R=3ex{  &&&& *{ \circ }<3pt>  \ar@{<-}[d]_<{\ble{-n-1}}^<{ _{n+1} } \\
			*{ \circ }<3pt> \ar@{->}[r]^{ _{\ble{0}}  \qquad}_<{1 \ \ }
			&*{\circ}<3pt> \ar@{->}[r]^{  _{\ble{-1}} \qquad  }_<{2  \ \
			}  &   {} \ar@{.}[r]&   \ar@{->}[r] &  *{\circ}<3pt> \ar@{->}[r]^{ \ble{-n+2 \quad } \qquad
				\quad \ }_<{ _{n-1} \ \ }    & *{\circ}<3pt> \ar@{-}[l]_{  \qquad \ \	\ble{-n+1} }^<{  \ \  _{n} }  } } $ 
	\item for $\g=F_4^{(1)}$, $\ord(\varrho)=2$ and
	$ Q  = \raisebox{2.3em}{\xymatrix@C=4ex@R=3ex{ && *{\circ}<3pt>\ar@{->}[d]_<{\ble{-2}}^<{2} \\
			*{ \circ }<3pt> \ar@{->}[r]^<{\ble{0}  \ \; }_<{1}  &*{\circ}<3pt>
			\ar@{->}[r]^<{\ble{-2} \ \; }_<{3} &*{ \circ }<3pt> \ar@{->}[r]^<{
				\; \ble{-3}   }_<{4} &*{\circ}<3pt> \ar@{-}[r]^<{\ble{-4}  \ \;
			}_<{5} &*{\circ}<3pt> \ar@{->}[l]_<{\  \ble{-2}}^<{6} }} $,
	\item for $\g=G_2^{(1)}$, $\ord(\varrho)=3$ and 
	$ Q  = \raisebox{2.3em}{\xymatrix@C=4ex@R=3ex{ & *{ \circ }<3pt> \ar@{<-}[d]_<{\ble{-3}}^<{3}  \\
			*{ \circ }<3pt> \ar@{<-}[r]^<{\ble{-1} \ \ }_<{1}   &*{\circ}<3pt>
			\ar@{->}[r]^<{\ble{0}  \ \ \ \  }_<{2}   &*{ \circ }<3pt>
			\ar@{-}^<{\ble{-5} \ }_<{4} } }$,
\end{itemize}
where an underline integer $\ble{*}$ is the value of $\xi_i$ at each vertex $i \in \Dynkinf$ and  an arrow $i \to j$ means that $\xi_i>\xi_j$ and $\dist(i,j)=1$ in the Dynkin diagram $\Dynkinf$.
 Note that our choice of $Q$ satisfies $\eqref{Eq: ex cond}$.
Then, $\tau_Q$ is given as follows:
\begin{itemize}
	\item for simple-laced $ADE$ type, $\tau_Q$ is the same as $\tau$ in Appendix \ref{App: ADE}, 
	\item for $\g=B_n^{(1)}, C_n^{(1)}$, $\tau_Q = s_1 s_2 \cdots s_n \varrho$,
	\item for $\g=F_4^{(1)}$, $\tau_Q = s_1 s_2 s_3 s_4 \varrho$,
	\item for $\g=G_2^{(1)}$, $\tau_Q = s_2 s_1 \varrho$.
\end{itemize}
In this case the set $\sigQ$ is contained in $\sigZ$ in Section \ref{Sec: HL cat}, and  can be written explicitly as follows (where $a\le_2 b$ means that $a\le b$ and $a\equiv b\mod 2$): 
\begin{align*}
	&\sigma_Q(A^{(1)}_{n }) \seteq \{(i,(-q)^{k}) \in \sigma_0(A^{(1)}_{n })  \ |  \    i-2n +1 \le_2  k   \le_2  -i +1   \}, \allowdisplaybreaks  \\
	&\sigma_Q(B^{(1)}_{n}) \seteq \{  (i,(-1)^{n+i}q_s^{k}), (n,q^{k'}) \in \sigma_0(B^{(1)}_{n})  \ |  \ i <n, \  -2n-2i+3  \le_2 k \le_2 2n-2i-1, \allowdisplaybreaks  \\
	&\hspace{46ex}  \ -2n+2 \le k' \le 0    \}, \allowdisplaybreaks  \\
	& \sigma_Q({C_{n}^{(1)}}) \seteq \{  (i, (-q_s)^{k})  \in \sigma_0(C^{(1)}_{n})  \ | \ -\dd(1,i)-2n  \le_2 k \le_2 -\dd(1,i)   \} , \allowdisplaybreaks \\
	& \sigma_Q({D_{n}^{(1)}}) \seteq \{  (i, (-q)^{k})  \in \sigma_0(D^{(1)}_{n})  \ | \ -\dd(1,i)-2n+4  \le_2 k \le_2  -\dd(1,i)  \} , \allowdisplaybreaks \\
	& \sigma_Q({E_{6}^{(1)}}) \seteq \{  (i,(-q)^k)  \in \sigma_0(E^{(1)}_{6})  \ | \    \dd(1,i)-14  \le_2  k \le_2 -\dd(1,i) + 2\delta_{i,2} \}, \  \allowdisplaybreaks  \\
	& \sigma_Q({E_{7}^{(1)}}) \seteq \{  (i,(-q)^k)  \in \sigma_0(E^{(1)}_{7})  \ | \    -\dd(1,i)-16 + 2\delta_{i,2} \le_2 k \le_2-\dd(1,i) + 2\delta_{i,2} \}, \  \allowdisplaybreaks  \\
	& \sigma_Q({E_{8}^{(1)}}) \seteq \{  (i,(-q)^k)  \in \sigma_0(E^{(1)}_{8})  \ | \    -\dd(1,i)-28 + 2\delta_{i,2} \le_2 k \le_2 -\dd(1,i) + 2\delta_{i,2} \}, \  \allowdisplaybreaks  \\
	& \sigma_Q({F_{4}^{(1)}}) \seteq \{  (i, (-1)^iq^{k})  \in \sigma_0(F^{(1)}_{4})  \ | \    \dd(i,3)-10 + \frac{\delta_{i,3}}{2}  \le k \le \dd(i,3)-2 + \frac{\delta_{i,3}}{2}  \},  \allowdisplaybreaks  \\
	& \sigma_Q({G_{2}^{(1)}}) \seteq \{  (i, (-q_t)^{k})  \in \sigma_0(G^{(1)}_{2})  \ | \ -\dd(2,i)-10 \le_2 k  \le_2 -\dd(2,i)  \} , \allowdisplaybreaks 
\end{align*}
where $\dd(i,j)$ denotes the distance between $i$ and $j$ in the Dynkin diagram of $\g_0$.

We now assume that $\g$ is of twisted type. Then one can define 
\begin{align*}
	&\sigma_Q(A^{(2)}_{N}) \seteq \{(i, (-q)^{k} )^\star   \ |  \   (i,(-q)^{k}) \in \sigma_Q(A^{(1)}_{N})  \}, \
	(N=2n-1 \text{ or } 2n) \allowdisplaybreaks \\
	&\sigma_Q(D^{(2)}_{n+1}) \seteq \{(i, (-q)^{k})^\star   \ |  \   (i,(-q)^{k}) \in \sigma_Q(D^{(1)}_{n+1})  \}, \allowdisplaybreaks \\
	&\sigma_Q(E^{(2)}_{6}) \seteq \{(i,( -q)^{k} )^\star \ |  \   (i,(-q)^{k}) \in \sigma_Q(E^{(1)}_{6})  \}, \allowdisplaybreaks \\
	&\sigma_Q(D^{(3)}_{4}) \seteq \{(i,( -q)^{k})^\dagger  \ |  \   (i,(-q)^{k}) \in \sigma_Q(D^{(1)}_{4})  \},
\end{align*}
where, for $(i,a) \in \sigma_0(\g^{(1)}_{N}) $,  we set 
\begin{equation*} 
	\begin{aligned}
		(i,a)^\star =
		\begin{cases}
			(i,a) &\text{ if } \g=A^{(1)}_{N}, \  i  \le \lfloor (N+1)/2  \rfloor \text{ or }  \g=E^{(1)}_{6}, \  i =1,  \\
			(N+1-i,(-1)^Na)  & \text{ if } \g=A^{(1)}_{N}, \  i  > \lfloor (N+1)/2 \rfloor, \\
			(i,\sqrt{-1}^{n+1-i}a)  & \text{ if } \g=D^{(1)}_{n+1}, \  i  \le  n-1,  \\
			(n,(-1)^ia)  & \text{ if } \g=D^{(1)}_{n+1},  \  i\in \{n,n+1\},  \\
			(2,a)   & \text{ if } \g=E^{(1)}_{6}, \ i=3, \\
			(2,-a)    & \text{ if } \g=E^{(1)}_{6}, \ i=5, \\
			(1,-a)   & \text{ if } \g=E^{(1)}_{6}, \ i=6 , \\
			(3,\sqrt{-1}a)   & \text{ if } \g=E^{(1)}_{6}, \ i=4 , \\
			(4,\sqrt{-1}a)  & \text{ if } \g=E^{(1)}_{6}, \ i=2,
		\end{cases}
	\end{aligned}
\end{equation*}
and
\begin{align*} 
	(i,a)^\dagger =
	\begin{cases}
		(2,a) & \text{ if } i=2 \\
		(1,(\delta_{i,1}+\delta_{i,3} \omega+\delta_{i,4} \omega^2)a) & \text{ if } i \ne 2,
	\end{cases}
\end{align*}
(see \cite[Proposition 4.3]{KKKO15D} and \cite[Proposition 6.5]{OS19} for details of $\star$ and $\dagger$).
The bijection $\phi_Q\col \Df \isoto\sigQ$ is defined by composing the bijection for untwisted type with the maps $\star$ and $\dagger$.

Comparing the above descriptions of $\sigQ$ with the descriptions of $\sigZ$ given in Section \ref{Sec: HL cat}, one can easily show that 
\begin{equation} \label{Eq: sigZ and sigQ}
\begin{aligned}
&\sigZ = \bigsqcup_{k \in \Z }  \sigQ ^{*k},   \\
&\sigQ ^{*k} \cap \sigQ ^{*k'} = \emptyset \quad  \text{ for } k,k' \in   \Z  \text{ with } k \ne k', 
\end{aligned}
\end{equation}
where  $\sigQ ^{*k} \seteq \{ (i^{*k},(p^*)^ka) \mid (i,a) \in \sigQ  \}$ and 
$ i^{*k} =  
\begin{cases}
i & \text{ if $k$ is even}, \\
i^* & \text{ if $k$ is odd,}
\end{cases}
$\\ 
(See \cite[Proposition 5.9]{FO20}.)
Note that $p^*$ is given in \eqref{Eq: p* tilde p}.

\vs{1ex}
Let $\CaQ$ be the smallest full subcategory of $\Ca_\g^0$ such that 
\bna
\item $\CaQ$ contains $\trivial$ and $V(\varpi_i)_x$ for all $(i,x) \in \sigQ$,
\item $\CaQ$ is stable by taking subquotients, extensions and tensor products.
\ee
It was shown in \cite[Theorem 6.1]{HL15}, \cite[Corollary 5.6]{KKKO15D}, \cite[Corollary 6.6]{KO18} and \cite[Section 6]{OS19}
that the Grothendieck ring $K(\CaQ)$ of the monoidal category $\CaQ$ is isomorphic to 
the coordinate ring $\C[N]$ of the maximal unipotent group $N$ associated with $\gf$. 
The set $\Df$ has a convex order $\prec_Q$ arising from $Q$. 

Let $\beta \in \Df$ and write $(i,a) = \phi_Q(\beta)$. Then we set 
$$
V_Q(\beta) \seteq V(\varpi_i)_a \in \CaQ.
$$
Under the categorification, the modules $V_Q(\beta)$ correspond to the dual PBW vectors of $ \C[N]$ with respect to the convex order $\prec_Q$ on $\Df$.

The proposition below follows from  \cite[Section 4.3]{KKK15B}, \cite[Proposition 4.9, Theorem 5.1]{KKKO15D}, \cite[Section 4.3]{KO18} and \cite[Section 6]{OS19}.

\begin{proposition} [\protect \cite{KKK15B,KKKO15D, KO18,OS19}]   \label{Prop: CQ categorification}  \
For a minimal pair $(\al, \be)$ of a positive root $\gamma \in \Df$, $V_Q(\gamma)$ is isomorphic to the head of $ V_Q(\al) \otimes V_Q(\beta) $.
Here, $(\al, \be)$ is called a minimal pair of $\gamma$ if $ \al \prec_Q \beta $, $\gamma = \al+\beta$ and there exists no pair $(\al', \be')$ such that 
$ \gamma = \al'+\be' $ and $\al \prec_Q \al' \prec_Q \be' \prec_Q \beta  $.
\end{proposition}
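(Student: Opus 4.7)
The plan is to derive this proposition from its analogue on the quiver Hecke algebra side via the quantum affine Schur--Weyl duality. By the results of \cite{KKK15B, KKKO15D, KO18, OS19}, there exists an exact monoidal functor $\mathcal{F}_Q \colon R^{\gf}\gmod \to \CaQ$ from the category of finite-dimensional graded modules over the quiver Hecke algebra $R^{\gf}$ of type $\gf$ to $\CaQ$, which sends the dual PBW module $\mathsf{r}_Q(\beta)$ indexed by $\beta \in \Df$ to $V_Q(\beta)$, and which sends simple modules to simple modules (or zero). The construction is tuned so that the assignment $\beta \mapsto V_Q(\beta)$ corresponds, under $\phi_Q$, to the specific choice of $\sigQ$ recalled above.

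First, I would invoke the known statement on the quiver Hecke algebra side: for a convex order $\prec_Q$ on $\Df$ and a minimal pair $(\al, \beta)$ of $\gamma \in \Df$, the head of the convolution product $\mathsf{r}_Q(\al) \conv \mathsf{r}_Q(\beta)$ is isomorphic to $\mathsf{r}_Q(\gamma)$. This is a foundational fact in the PBW theory of quiver Hecke algebras, due to McNamara (and refined by Kato and Brundan--Kleshchev--McNamara for convex orders coming from reduced expressions of $w_0$). The compatibility of $\prec_Q$ with the $Q$-datum ensures that minimal pairs on the root side correspond exactly to pairs of adjacent dual PBW modules in this order.

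Next, I would apply $\mathcal{F}_Q$. Being exact and monoidal, $\mathcal{F}_Q$ intertwines $\conv$ with $\tens$ and sends heads to heads whenever the target head is known to be simple; and the simplicity of the head of $V_Q(\al) \tens V_Q(\beta)$ follows from Theorem \ref{Thm: basic properties}\,(iii) applied to the fundamental representations involved (using that $\al \prec_Q \beta$ forces $b/a$ to avoid the zeros of $d_{V(\varpi_i),V(\varpi_j)}$ in the wrong direction, where $(i,a) = \phi_Q(\al)$ and $(j,b) = \phi_Q(\beta)$). Combining this with $\mathcal{F}_Q(\mathsf{r}_Q(\beta)) \simeq V_Q(\beta)$ for all $\beta \in \Df$ yields $\hd(V_Q(\al) \tens V_Q(\beta)) \simeq V_Q(\gamma)$.

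The main obstacle, and the reason the result is type-dependent, is establishing the existence and the above key properties of $\mathcal{F}_Q$: exactness, monoidality, and the explicit matching $\mathsf{r}_Q(\beta) \mapsto V_Q(\beta)$. This is precisely the content of \cite{KKK15B, KKKO15D, KO18, OS19}, where it is checked via a careful analysis of the denominator formulas in Appendix \ref{App: denominators} and the combinatorics of $\sigQ$ encoded in $\phi_Q$. Once this dictionary is in place, the proposition reduces to a direct transport of the McNamara-type result, with no further computation required.
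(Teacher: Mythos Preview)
The paper does not give its own proof of this proposition; it simply records that it follows from \cite[Section 4.3]{KKK15B}, \cite[Proposition 4.9, Theorem 5.1]{KKKO15D}, \cite[Section 4.3]{KO18} and \cite[Section 6]{OS19}. Your outline is a faithful summary of the argument in those references: transport McNamara's minimal-pair result for dual PBW modules over the quiver Hecke algebra of type $\gf$ through the generalized Schur--Weyl duality functor $\F_Q$, using exactness and monoidality together with the identification $\F_Q(\mathsf{r}_Q(\beta))\simeq V_Q(\beta)$.
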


\vskip 2em 

\section{New invariants for pairs of modules} \label{Sec: new invariants}

In this section, we recall several properties of the new invariants arising from $R$-matrices introduced in \cite{KKOP19C}.

We set
\begin{align*} 
 \varphi(z) \seteq \prod_{s=0}^\infty (1-\tp^{s}z)
 =\sum_{n=0}^\infty\hs{.3ex}\dfrac{(-1)^n\tp^{n(n-1)/2}}{\prod_{k=1}^n(1-\tp^k)}\;z^n
 \in\ko[[z]]\subset\corh[[z]],
\end{align*}
where $\tilde{p}$ is given in $\eqref{Eq: p* tilde p}$. We consider the subgroup $\G$ of $\cor((z))^\times$ given by 
\begin{align*}
\G \seteq \left\{ cz^m \prod_{a \in \ko^\times} \varphi(az)^{\eta_a} \ \left|  \
\begin{matrix} \ c \in \ko^\times, \ m \in \Z , \\
\eta_a \in \Z \text{ vanishes except finitely many $a$'s. } \end{matrix} \right. \right\}.
\end{align*}
Note that, if $\Runiv_{M,N_z}$ is rationally renormalizable for $M, N \in \uqm$, then the renormalizing coefficient $c_{M,N}(z)$ belongs to $\G$ (see \cite[Proposition 3.2]{KKOP19C}).
In particular, for simple modules $M$ and $N$ in $\uqm$, the universal coefficient $a_{M,N}(z)$ belongs to $\G$.

For a subset $S$ of $\Z$, let $\tp^{S} \seteq \{ \tp^k \ | \ k \in S\} $.
We define the group homomorphisms
\begin{align*}
\Deg \col   \Bg \to  \Z \quad \text{ and } \quad \Di \col   \Bg \to  \Z,
\end{align*} by
$$
\Deg(f(z)) = \sum_{a \in \tp^{\,\Z_{\le 0}} }\eta_a - 
\sum_{a \in \tp^{\,\Z_{> 0}} } \eta_a \quad \text{ and } \quad \Di(f(z)) = \sum_{a \in \tp^{\,\Z}} \eta_a
$$
for $f(z)=cz^m \prod_{  a\in\cor^\times } \varphi(az)^{\eta_a} \in \Bg$.

\Lemma [\protect{\cite[Lemma 3.4]{KKOP19C}}]   \label{Lem:properties of Deg}
Let $f(z)\in\Bg$.
\bnum
\item \label{Lem: pdeg:1} 
If $f(z)\in  \cor(z)^\times $, then we have $f(z)\in \Bg$ and
$$\Deg^\infty(f(z))=0,\text{\ and\;\ }
\Deg(f(z))=2\zero_{z=1}f(z).$$
\item If $g(z), \;h(z)\in\G$ satisfy
$g(z)/h(z)\in\cz$, then $\Deg(h(z))\le\Deg(g(z))$.
\label{degpol}
\item $\Deg^\infty f(z)=  -\Deg\bl f(\tp^{n}z)\br=\Deg\bl f(\tp^{-n}z)\br$
 for $n \gg 0$. \label{it:ninf}
\item  \label{Lem: pdeg:4}
 If $\Deg^\infty\bl f(cz)\br=0$ for any $c\in\cor^\times$, then
$f(z)\in\cor(z)^\times$.
\ee
\enlemma

The following invariants for a pair of modules $M$, $N$ in $\uqm$ such that $\Runiv_{M,N_z}$ is \rr  
have been introduced in \cite{KKOP19C} by using the homomorphisms $\Deg$ and $\Di$.
\begin{definition} \label{def: Lams}
For non-zero modules $M$ and $N$ in $\uqm$ such that $\Runiv_{M,N_z}$ is \rr, we define the integers $\Lambda(M,N)$ and $\Lambda^\infty(M,N)$ by
\begin{align*}
  \Lambda(M,N) & \seteq\Deg(c_{M,N}(z)),\\
\Lambda^\infty(M,N) & \seteq\Deg^\infty(c_{M,N}(z)).
\end{align*}
\end{definition}

We have $\La(M,N)\equiv
\Li(M,N)\mod 2$.

\begin{proposition}  [\protect{\cite[Lemma 3.7]{KKOP19C}}] \label{Prop: para shift for La}
For any simple modules $M,N\in \Ca_\g$ and $x \in \cor^\times$, we have 
\eqn&&\La(M,N)=\La(M_x,N_x) \qtq
\Li(M,N)=\Li(M_x,N_x).
\eneqn

\end{proposition}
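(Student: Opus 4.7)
The plan is to reduce the proposition to the statement $c_{M_x, N_x}(z) \equiv c_{M,N}(z) \pmod{\cz^\times}$, after which the conclusion is immediate: every element of $\cz^\times = \bigsqcup_{n \in \Z}\ko^\times z^n$ lies in the kernel of both $\Deg$ and $\Di$, because such an element has no $\varphi(az)$-factor, so $\eta_a = 0$ for all $a \in \ko^\times$ in its canonical representation as an element of $\G$.

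To establish the invariance $c_{M_x, N_x}(z) \equiv c_{M,N}(z) \pmod{\cz^\times}$, I would use the factorization $c_{M,N}(z) = d_{M,N}(z)/a_{M,N}(z)$ (valid up to $\cz^\times$) and verify the invariance of each factor separately. For the universal coefficient $a_{M,N}(z)$, defined by $\Runiv_{M, N_z}(u \otimes v_z) = a_{M,N}(z)(v_z \otimes u)$ on the dominant extremal weight vectors $u \in M$, $v \in N$: under the canonical identification $(N_x)_z \simeq N_{xz}$ sending the dominant vector $(v_x)_z$ to $v_{xz}$, the corresponding equation for $a_{M_x, N_x}(z)$ reduces to the same equation, yielding $a_{M_x, N_x}(z) = a_{M,N}(z)$. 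For the denominator $d_{M,N}(z)$: its characterization as the minimal monic polynomial such that $d_{M,N}(z)\Rnorm_{M, N_z}(M \otimes N_z) \subset N_z \otimes M$ is preserved under the same identification, giving $d_{M_x, N_x}(z) = d_{M,N}(z)$.

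The main technical content of this argument is the spectral-shift invariance of the universal R-matrix $\Runiv$, which reflects the principle that $\Runiv$ depends only on the \emph{ratio} of the spectral parameters of its two arguments. This can be verified from the explicit formula $\Runiv_{A, B}(u \otimes v) = q^{(\wt u, \wt v)} \sum_\nu P_\nu v \otimes Q_\nu u$: the factor of $x$ produced when $P_\nu \in U_q^+(\g)$ acts on the shifted affinized module on the right cancels exactly against the factor of $x$ produced when the dual element $Q_\nu \in U_q^-(\g)$ acts on the shifted module on the left, because $P_\nu$ and $Q_\nu$ carry opposite weights (in particular opposite affine degrees) under the Drinfeld pairing. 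Equivalently, one may view the spectral shift functor $T_x$ as a monoidal autoequivalence of the appropriate affinization of $\Ca_\g$, which thus preserves all naturally defined structure morphisms up to scalar, and in particular sends $\Runiv_{M, N_z}$ to $\Runiv_{M_x, (N_x)_z}$.
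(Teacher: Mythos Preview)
Your argument is correct. The paper does not give its own proof of this proposition but simply cites it from \cite[Lemma 3.7]{KKOP19C}; your approach---reducing to $c_{M_x,N_x}(z)\equiv c_{M,N}(z)\pmod{\cz^\times}$ via the spectral-shift invariance of $\Runiv$, and then noting that $\cz^\times\subset\ker\Deg\cap\ker\Di$---is the natural one and is essentially what the cited reference does.
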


\begin{proposition}  [\protect{\cite[Lemma 3.7, 3.8 and Corollary 3.23 ]{KKOP19C}}] \label{Prop: Li}
Let $M,N$ be simple modules in $\Ca_\g$.
\bnum
\item  $\Li(M,N)   =-\Deg^\infty(a_{M,N}(z))$, 
\item  \label{Prop: Li: item2} $ \Li(M,N) = \Li(N,M)$,
\item $ \Li(M,N) = - \Li(M^*, N) = - \Li(M,\rd N)$. 
\item In particular, $ \Li(M,N) = \Li(M^*, N^*) = \Li( \rd M,\rd N)$.
\ee
\end{proposition}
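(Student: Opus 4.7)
The plan is to establish the four statements in sequence. Part (i) is essentially formal given Lemma \ref{Lem:properties of Deg}, and part (iv) drops out from (iii); the substantive work concentrates in (ii) and (iii).

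For (i), recall from Section \ref{subsec:R} that $c_{M,N}(z) = d_{M,N}(z)/a_{M,N}(z)$ up to a factor in $\ko[z^{\pm 1}]^\times$. Since $d_{M,N}(z)$ is a polynomial it lies in $\ko(z)^\times$, so Lemma \ref{Lem:properties of Deg}\eqref{Lem: pdeg:1} forces $\Deg^\infty(d_{M,N}(z)) = 0$. Units in $\ko[z^{\pm 1}]^\times$ are monomials $cz^m$ and likewise have $\Deg^\infty = 0$. Since $\Deg^\infty\colon \Bg \to \Z$ is a group homomorphism,
\begin{equation*}
\Li(M,N) = \Deg^\infty(c_{M,N}(z)) = \Deg^\infty(d_{M,N}(z)) - \Deg^\infty(a_{M,N}(z)) = -\Deg^\infty(a_{M,N}(z)),
\end{equation*}
proving (i).

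For (ii), by (i) it suffices to check $\Deg^\infty(a_{M,N}(z)) = \Deg^\infty(a_{N,M}(z))$. The plan is to exploit the symmetry built into the universal R-matrix. After affinizing both factors, the composition $\Runiv_{N_{z_2}, M_{z_1}} \circ \Runiv_{M_{z_1}, N_{z_2}}$ is $U_q'(\g)$-linear on the generic fibre and lands in an endomorphism space which is one-dimensional for simple $M$ and $N$, hence is a scalar. Evaluating this scalar on a pair of dominant extremal weight vectors produces a functional identity of the form $a_{M,N}(z_2/z_1) \cdot a_{N,M}(z_1/z_2) \equiv r(z_1/z_2) \pmod{\ko[z^{\pm 1}]^\times}$ with $r$ rational. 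Setting $z = z_2/z_1$ and applying $\Deg^\infty$ via Lemma \ref{Lem:properties of Deg}\eqref{Lem: pdeg:1} together with \eqref{it:ninf}, which compares $\Deg^\infty(f(z))$ with $\Deg^\infty(f(\tp^n z))$ for $n \gg 0$, yields the required equality.

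Part (iii) is the main obstacle. The idea is to leverage the rigid monoidal structure of $\Ca_\g$ together with the compatibility of the universal R-matrix with the evaluation and coevaluation morphisms $M^* \otimes M \to \trivial$ and $\trivial \to M \otimes M^*$. After affinization these pairings intertwine $\Runiv_{M^*, N_z}$ with $\Runiv_{M, N_z}$, provided one inserts the correct spectral shift dictated by \eqref{eq: dual}, where left duality twists the parameter by $(p^*)^{-1}$ while right duality twists by $p^*$. The concrete outcome should be a factorization $c_{M^*, N}(z) \cdot c_{M, N}(\alpha z) \in \ko(z)^\times$ modulo $\ko[z^{\pm 1}]^\times$ for an appropriate $\alpha \in \ko^\times$ controlled by $p^*$. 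Applying $\Deg^\infty$ and using Lemma \ref{Lem:properties of Deg}\eqref{Lem: pdeg:1} and \eqref{it:ninf}, together with the translation invariance recorded in Proposition \ref{Prop: para shift for La}, gives $\Li(M^*, N) = -\Li(M, N)$. The symmetric assertion $\Li(M, \rd N) = -\Li(M, N)$ then follows either by repeating the duality argument with the right dual, or by combining the first equality with (ii): $\Li(M, \rd N) = \Li(\rd N, M) = -\Li((\rd N)^*, M) = -\Li(N, M) = -\Li(M, N)$, using $(\rd N)^* \simeq N$. Finally, (iv) is formal: applying (iii) twice gives $\Li(M^*, N^*) = -\Li(M, N^*) = \Li(M, \rd(N^*)) = \Li(M, N)$ using $\rd(N^*) \simeq N$, and similarly $\Li(\rd M, \rd N) = -\Li((\rd M)^*, \rd N) = -\Li(M, \rd N) = \Li(M, N)$. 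The principal hurdle is step (iii): pinning down the precise rational factor and spectral shift $\alpha$ produced by the duality pairing at the level of R-matrices; everything else reduces to bookkeeping with the group homomorphism $\Deg^\infty$ and the properties collected in Lemma \ref{Lem:properties of Deg}.
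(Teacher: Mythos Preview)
The paper does not supply a proof of this proposition; it is simply quoted from \cite{KKOP19C} (Lemmas~3.7, 3.8 and Corollary~3.23 there), so there is no in-paper argument to compare against directly.

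Your treatment of (i) is complete and correct. For (ii) and (iii) you have the right structural ingredients---unitarity of the universal R-matrix for (ii), compatibility of $\Runiv$ with the duality morphisms for (iii)---but, as you yourself flag, the actual identities relating $a_{M,N}(z)$ to $a_{N,M}(z^{-1})$ and $c_{M^*,N}(z)$ to $c_{M,N}(\alpha z)$ are not written down, and verifying that $\Deg^\infty$ behaves as needed under $z\mapsto z^{-1}$ or under the shift by $\alpha$ is exactly where the work lies. In \cite{KKOP19C} part (iii) (their Corollary~3.23) is not obtained by tracking $\alpha$ through the pairing at all: it is deduced from the closed formula
\[
\Li(M,N)=\sum_{k\in\Z}(-1)^k\,\de(M,\D^kN)
\]
(their Proposition~3.22, recorded here as Proposition~\ref{Prop: La and d}). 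From this, (iii) is a one-line re-indexing: $\Li(M^*,N)=\sum_k(-1)^k\de(M^*,\D^kN)=\sum_k(-1)^k\de(M,\D^{k+1}N)=-\Li(M,N)$, and similarly for $\rd N$. This route entirely avoids the spectral-shift bookkeeping you anticipate as the ``principal hurdle''. Your derivation of (iv) from (iii), and of the second half of (iii) from the first via (ii), is clean and matches what one would do in either approach.
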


\begin{proposition} [\protect{\cite[Lemma 3.7  and Proposition 3.18]{KKOP19C}}]
Let $M,N$ be simple modules in $\Ca_\g$.
\bnum
\item $ \La(M,N) =  \La(N^*, M) =  \La(N,\rd M)$. 
\item In particular, 
\begin{align*}
 \La(M,N) &= \La(M^*, N^*) = \La( \rd M,\rd N).
\end{align*}
\ee
\end{proposition}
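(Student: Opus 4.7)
The plan is to establish the first identity $\Lambda(M,N)=\Lambda(N^*,M)$ by relating the renormalizing coefficients $c_{M,N}(z)$ and $c_{N^*,M}(z)$ through a spectral-parameter substitution, and then to apply the homomorphism $\Deg$. The second identity $\Lambda(N^*,M)=\Lambda(N,\rd M)$ is strictly parallel using right duals in place of left duals, and the corollary on $\Lambda(M,N)=\Lambda(M^*,N^*)=\Lambda(\rd M,\rd N)$ is immediate by iterating the main identity.

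First, I would use the rigidity of $\Ca_\g$ to express $\Runiv_{N^*,M_z}$ in terms of $\Runiv_{M,N_w}$: the adjunction morphisms $N^*\otimes N\to\trivial$ and $\trivial\to N\otimes N^*$ together with the isomorphism $N^{**}\simeq N_{\tp^{-1}}$ in \eqref{Eq: p* tilde p} yield a commutative diagram that represents $\Rnorm_{N^*,M_z}$ as the composition of $\Rnorm_{M,N_w}$ (for some explicit $w=f(z)$, essentially of the form $w = (\tp)^{\pm 1}z^{-1}$) with the duality morphisms. Tracking dominant extremal vectors through this composition gives an identity
$$a_{N^*,M}(z)=a_{M,N}\bigl(f(z)\bigr)\,\cdot\,\alpha(z),\qquad
d_{N^*,M}(z)=d_{M,N}\bigl(f(z)\bigr)\,\cdot\,\beta(z),$$
for some $\alpha(z),\beta(z)\in\cor(z)^\times$ determined by the rigidity structure. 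Taking the quotient $c = d/a$ yields
$$c_{N^*,M}(z)=c_{M,N}\bigl(f(z)\bigr)\,\cdot\,h(z)\qquad\text{for some }h(z)\in\cor(z)^\times.$$

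Next, I would apply $\Deg$ to both sides and verify that each factor behaves correctly. By Lemma~\ref{Lem:properties of Deg}\eqref{Lem: pdeg:1}, $\Deg(h(z))=2\,\zero_{z=1}h(z)$; a direct computation of the orders of $\alpha$ and $\beta$ at $z=1$ (using that they come from the evaluation/coevaluation pairing on specific vectors) shows that this contribution vanishes. It then remains to check that the substitution $z\mapsto f(z)$ preserves $\Deg$; explicitly, for $f(z)=\tp^{-1}z^{-1}$, one has $\varphi(az)|_{z\mapsto f(z)} \equiv \varphi(\tp^{-1}a^{-1}z)\pmod{\cor(z)^\times}$, and the exchange $\tp^{\Z_{\le 0}}\leftrightarrow \tp^{\Z_{>0}}$ in the definition of $\Deg$ is compensated by the inversion of exponents, leaving $\Deg$ invariant. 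This gives $\Lambda(N^*,M)=\Lambda(M,N)$.

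The main obstacle is to pin down $f(z)$ and the rational correction factor $h(z)$ precisely enough to see that $2\,\zero_{z=1}h(z)$ vanishes. Both $\alpha(z)$ and $\beta(z)$ depend on the normalization of the dominant extremal vectors in $M,N,N^*$, and some care is needed to identify cancellations between them—this is analogous to how $\Li$ has a sign flip under duality (Proposition~\ref{Prop: Li}) while $\Lambda$ does not, the discrepancy being absorbed precisely in the $\zero_{z=1}$ contribution. Once the identity of renormalizing coefficients is made precise, the $\Deg$ computation is formal and (ii) follows by applying (i) twice (once in each argument).
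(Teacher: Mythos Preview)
The paper does not supply a proof of this proposition; it is quoted from \cite{KKOP19C} (Lemma~3.7 and Proposition~3.18 there). So there is no ``paper's proof'' to compare against directly, and your proposal must stand on its own.

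Your overall strategy---use rigidity to relate $\Rren_{N^*,M_z}$ to $\Rren_{M,N_w}$ and then compare degrees---is the natural one, and is essentially the route taken in \cite{KKOP19C}. However, the specific mechanism you propose for the final step is incorrect. You claim that under the substitution $f(z)=\tp^{-1}z^{-1}$ one has
\[
\varphi(az)\big|_{z\mapsto f(z)} \equiv \varphi(\tp^{-1}a^{-1}z)\pmod{\cor(z)^\times}.
\]
This is false: the left-hand side is $\prod_{s\ge0}(1-\tp^{s-1}az^{-1})$, with zeros at $z\in a\tp^{\ge -1}$, while the right-hand side is $\prod_{s\ge0}(1-\tp^{s-1}a^{-1}z)$, with zeros at $z\in a\tp^{\le 1}$. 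After the finite overlap cancels, the ratio still has infinitely many zeros (at $a\tp^{\ge2}$) and infinitely many poles (at $a\tp^{\le -2}$), so it is not rational. More fundamentally, the substitution $z\mapsto z^{-1}$ sends $\cor((z))$ to $\cor((z^{-1}))$ and does not preserve the group $\G$ on which $\Deg$ is defined; there is no reason for $\Deg$ to be invariant under it in the way you assert.

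You correctly identify this as the crux (``the main obstacle is to pin down $f(z)$ and $h(z)$''), but your resolution does not work. In \cite{KKOP19C} the argument avoids this trap: the rigidity adjunction is applied in a way that keeps the spectral parameter on the same side throughout, so one compares $c_{N^*,M}(z)$ and $c_{M,N}(z)$ as elements of $\G$ without ever leaving $\cor((z))$. The relevant input is the compatibility of $\Runiv$ with the antipode, which yields a relation between $a_{N^*,M}(z)$ and $a_{M,N}(z)$ (and similarly for the denominators) with a rational correction factor whose order at $z=1$ one can compute directly; no $z\mapsto z^{-1}$ is needed. You should rework the adjunction step with this in mind.
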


\Prop  [\protect{\cite[Proposition 3.9]{KKOP19C}}]  \label{Prop: subquotients for Li}
Let $M$ and $N$ be modules in $\uqm$, and let
$M'$ and $N'$ be a non-zero subquotient of $M$ and $N$, respectively.
Assume that $\Runiv_{M,N_z}$ is \rr.
Then $\Runiv_{M',N'_z}$ is \rr, and
$$\La(M',N')\le \La(M,N)\qtq \Li(M',N')=\Li(M,N).$$
\enprop

\begin{proposition} [\protect{\cite[Proposition 3.11]{KKOP19C}}] \label{Prop: subquotient for Li}
Let $M$, $N$ and $L$ be non-zero modules in $\uqm$,
and let $S$ be a non-zero subquotient of $M\tens N$.
\bnum
\item
Assume that $\Runiv_{M,L_z}$ and $\Runiv_{N,L_z}$ are \rr.
Then $\Runiv_{S,L_z}$ is \rr and
$$ \Lambda(S,L)\le  \Lambda(M,L) + \Lambda(N,L)
\qtq
\Li(S,L)=\Li(M,L) + \Li(N,L).
$$
\item
Assume that $\Runiv_{L,M_z}$ and $\Runiv_{L,N_z}$ are \rr.
Then $\Runiv_{L,S_z}$ is \rr and
$$\Lambda(L,S)\le\Lambda(L,M) + \Lambda(L,N)
\qtq \Li(L,S)=\Li(L,M) + \Li(L,N).$$
\ee
\end{proposition}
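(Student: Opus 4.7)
The plan is to first establish part (i) in the special case $S = M \tens N$ by exploiting the compatibility of the universal R-matrix with tensor products (the hexagon identity), and then to pass to an arbitrary non-zero subquotient $S$ by invoking Proposition~\ref{Prop: subquotients for Li}. Part (ii) will follow by the symmetric version of the same argument on the opposite side.

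The key input will be the hexagon identity
$$
\Runiv_{M \tens N,\, L_z} = (\Runiv_{M, L_z} \tens \id_N) \circ (\id_M \tens \Runiv_{N, L_z}),
$$
viewed as a $\cor((z)) \tens \Up$-linear map from $M \tens N \tens L_z$ to $L_z \tens M \tens N$. Multiplying this factorization by the product $c_{M,L}(z)\, c_{N,L}(z)$, each factor on the right carries its input into the correct target tensor slot, so
$$
c_{M,L}(z)\, c_{N,L}(z)\, \Runiv_{M\tens N, L_z}\bl M\tens N \tens L_z\br \subset L_z \tens M \tens N.
$$
This shows that $\Runiv_{M\tens N, L_z}$ is rationally renormalizable and that $c_{M,L}(z)\, c_{N,L}(z)$ is a (not necessarily minimal) renormalizing coefficient. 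Comparing with the minimal choice $c_{M\tens N, L}(z)$, which is characterized up to $\cz^\times$ by non-vanishing of the specialization at every $x \in \cor^\times$, one has
$$
c_{M,L}(z)\, c_{N,L}(z) = h(z)\, c_{M\tens N, L}(z)
$$
for some polynomial $h(z)\in\cor[z]$; the polynomiality of $h$ is forced precisely by this non-vanishing property.

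Applying $\Di$ and $\Deg$ to the displayed identity and using Lemma~\ref{Lem:properties of Deg}(i), together with $h(z)\in\cor(z)^\times$, one obtains $\Di(h) = 0$ and $\Deg(h) = 2\,\zero_{z=1}h(z) \ge 0$ (the latter since $h$ is a polynomial), hence
$$
\Li(M\tens N, L) = \Li(M,L) + \Li(N,L) \qtq \La(M\tens N, L) \le \La(M,L) + \La(N,L).
$$
Proposition~\ref{Prop: subquotients for Li} applied to the subquotient $S$ of $M\tens N$ then yields rational renormalizability of $\Runiv_{S, L_z}$ with $\La(S,L)\le\La(M\tens N,L)$ and $\Li(S,L)=\Li(M\tens N,L)$, completing part (i). Part (ii) proceeds verbatim using the mirror hexagon
$$
\Runiv_{L_z,\, M\tens N} = (\id_M \tens \Runiv_{L_z, N}) \circ (\Runiv_{L_z, M} \tens \id_N),
$$
so that $c_{L,M}(z)\, c_{L,N}(z)$ furnishes a renormalizing coefficient for $\Runiv_{L_z,\, M\tens N}$.

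The main delicate point I expect is to justify that $h(z)$ can be taken to be a polynomial (rather than a general rational function), so that $\Deg(h)\ge 0$; this rests on the uniqueness clause in the definition of $c_{M\tens N, L}(z)$ up to $\cz^\times$, namely that it is precisely the renormalizing coefficient whose specialization is non-vanishing everywhere on $\cor^\times$.
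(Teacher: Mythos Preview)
The paper does not prove this proposition; it is quoted verbatim from \cite[Proposition 3.11]{KKOP19C} without argument, so there is no in-paper proof to compare against. Your approach---factoring $\Runiv_{M\tens N,L_z}$ via the hexagon identity to exhibit $c_{M,L}(z)\,c_{N,L}(z)$ as a renormalizing coefficient, comparing with the minimal $c_{M\tens N,L}(z)$, and then descending to a subquotient via Proposition~\ref{Prop: subquotients for Li}---is the standard one and is essentially correct.

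One small correction on the point you flagged as delicate: in general $h(z)$ lies in $\cor[z^{\pm1}]$ rather than $\cor[z]$. The justification is that $\Rren_{M\tens N,L}=c_{M\tens N,L}(z)\Runiv$ is a $\cor[z^{\pm1}]$-linear map whose specialization at every $a\in\cor^\times$ is non-zero, so the entries of its matrix (in any basis) have no common zero in $\cor^\times$; since $h\cdot\Rren_{M\tens N,L}$ also has entries in $\cor[z^{\pm1}]$, the ratio $h$ can have no pole in $\cor^\times$, forcing $h\in\cor[z^{\pm1}]$. This is harmless for your conclusions: Lemma~\ref{Lem:properties of Deg}\,\ref{Lem: pdeg:1} still gives $\Di(h)=0$ and $\Deg(h)=2\,\zero_{z=1}h(z)\ge0$ for any non-zero Laurent polynomial, and Lemma~\ref{Lem:properties of Deg}\,\ref{degpol} gives the $\Deg$ inequality directly.
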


\begin{corollary} [\protect{\cite[Corollary 3.12]{KKOP19C}}] \label{Cor: Li and s}
Let $M,N$ be simple modules in $\Ca_\g$. Suppose that 
$M$ (resp.\ $N$) is isomorphic to a subquotient  of $V(\varpi_{i_1 })_{a_1} \otimes V(\varpi_{i_2 })_{a_2}\otimes \cdots\otimes V(\varpi_{i_k})_{a_k}$ 
(resp.\ $V(\varpi_{j_1 })_{b_1} \otimes V(\varpi_{j_2 })_{b_2}\otimes \cdots\otimes V(\varpi_{j_l})_{b_l}$).
Then we have 
$$
 \Lambda^{\infty}(M,N) = \sum_{ 1 \le \nu \le k,  \  1 \le \mu \le l} \Lambda^{\infty}(V(\varpi_{i_\nu})_{a_{\nu}},V(\varpi_{j_\mu})_{b_{\mu}}).
$$ 
\end{corollary}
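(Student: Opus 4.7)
My plan is to derive this corollary directly from Propositions~\ref{Prop: subquotients for Li} and \ref{Prop: subquotient for Li}, in two steps: first, replace the simple modules $M$ and $N$ by the ambient tensor products using the subquotient invariance of $\Li$; second, expand additively into fundamental factors in each argument.

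More concretely, set $M_0 \seteq V(\varpi_{i_1})_{a_1} \otimes \cdots \otimes V(\varpi_{i_k})_{a_k}$ and $N_0 \seteq V(\varpi_{j_1})_{b_1} \otimes \cdots \otimes V(\varpi_{j_l})_{b_l}$. Since the universal R-matrix between any two simple modules is \rr (as noted after Theorem~\ref{Thm: basic properties}), a straightforward induction using Proposition~\ref{Prop: subquotient for Li} shows that $\Runiv_{M_0, (N_0)_z}$ is \rr as well. Applying Proposition~\ref{Prop: subquotients for Li} to the simple subquotients $M$ of $M_0$ and $N$ of $N_0$ then yields $\Li(M,N) = \Li(M_0, N_0)$, reducing the claim to the case of full tensor products.

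For the second step, I would apply Proposition~\ref{Prop: subquotient for Li}(ii) with $S = N_0$ and induction on $l$ to split off one factor at a time in the second argument, obtaining
\[
\Li(M_0, N_0) \;=\; \sum_{\mu=1}^{l} \Li\bigl(M_0,\, V(\varpi_{j_\mu})_{b_\mu}\bigr).
\]
Then, for each fixed $\mu$, I would apply part (i) of the same proposition with $S = M_0$ and induct on $k$ to split the first argument:
\[
\Li\bigl(M_0,\, V(\varpi_{j_\mu})_{b_\mu}\bigr) \;=\; \sum_{\nu=1}^{k} \Li\bigl(V(\varpi_{i_\nu})_{a_\nu},\, V(\varpi_{j_\mu})_{b_\mu}\bigr).
\]
Summing over $\mu$ would then produce the asserted double sum.

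The only point to monitor throughout is the rational renormalizability hypothesis demanded at each inductive step; this is automatically propagated by the \rr conclusion built into Proposition~\ref{Prop: subquotient for Li}, starting from the simple-module base case, so no additional input is required. Accordingly I anticipate no genuine obstacle: the corollary is a formal consequence of the additivity and subquotient invariance of $\Li$ already in hand.
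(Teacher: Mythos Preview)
Your proposal is correct and is precisely the intended derivation: the paper itself does not prove this corollary but cites it from \cite{KKOP19C}, where it is obtained exactly as you describe, by combining the subquotient invariance of $\Li$ (Proposition~\ref{Prop: subquotients for Li}) with its additivity on tensor factors (Proposition~\ref{Prop: subquotient for Li}). There is nothing to add.
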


For simple modules $M$ and $N$ in $\uqm$, we define $\de(M,N)$ by
$$ 
\de(M,N) \seteq \dfrac{1}{2}\bl\Lambda(M,N) + \Lambda(M^*,N)\br.
$$

\begin{proposition}  [\protect{\cite[Proposition 3.16 and Corollary 3.19]{KKOP19C}}] \label{Prop: d and d}
Let $M,N$ be simple modules in $\Ca_\g$. Then we have 
\bnum
\item  $ \de(M,N)=\zero_{z=1}\bl d_{M,N}(z)d_{N,M}(z^{-1})\br$,
\item $\de(M,N)=\dfrac{1}{2}\Bigl(\La(M,N)+\La(N,M)\Bigr)$,
\item In particular,  $\de(M,N) = \de(N,M)$.
\ee
\end{proposition}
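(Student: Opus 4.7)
The plan is to prove (ii) by a short substitution, deduce (iii) immediately from its manifestly symmetric right-hand side, and then derive (i) from (ii) by unfolding $\La$ via $\Deg$ and invoking the unitarity of the universal R-matrix.

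For (ii), the proposition stated just before supplies $\La(M,N) = \La(N^*, M)$; interchanging the roles of $M$ and $N$ gives $\La(N,M) = \La(M^*, N)$. Substituting into the definition $\de(M,N) \seteq \tfrac12 \bl \La(M,N)+\La(M^*,N) \br$ yields (ii). Part (iii) is then automatic, since the right-hand side of (ii) is symmetric in $M$ and $N$.

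For (i), the factorization $c_{M,N}(z) = d_{M,N}(z)/a_{M,N}(z)$ together with $\La(X,Y) = \Deg(c_{X,Y}(z))$ lets me rewrite (ii) as
\[
 2\de(M,N) = \bl \Deg d_{M,N}(z) + \Deg d_{N,M}(z) \br - \bl \Deg a_{M,N}(z) + \Deg a_{N,M}(z) \br.
\]
Since $d_{M,N}(z), d_{N,M}(z) \in \cor(z)^\times$, Lemma 3.4(i) converts each polynomial $\Deg$ into $2\,\zero_{z=1}$, and because the involution $z \mapsto z^{-1}$ fixes $z=1$, the first bracket equals $2\,\zero_{z=1}\bl d_{M,N}(z)\,d_{N,M}(z^{-1}) \br$. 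Hence (i) reduces to the identity $\Deg a_{M,N}(z) + \Deg a_{N,M}(z) = 0$.

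This last identity is the main obstacle. It should follow from the unitarity of the universal R-matrix: composing $\Runiv_{N_z,M_x}$ with $\Runiv_{M_x,N_z}$ on the appropriate $\cor(x,z)$-localization produces the identity morphism, which translates on universal coefficients to $a_{M,N}(z/x)\,a_{N,M}(x/z) = 1$, i.e.\ $a_{M,N}(z)\,a_{N,M}(z^{-1}) = 1$. Consequently $c_{M,N}(z)\,c_{N,M}(z^{-1}) = d_{M,N}(z)\,d_{N,M}(z^{-1}) \in \cor(z)^\times$, and another application of Lemma 3.4(i) computes its $\Deg$ directly. The delicate step is to identify $\Deg a_{N,M}(z^{-1})$ with $\Deg a_{N,M}(z)$: although $a_{N,M}(z^{-1})$ need not a priori lie in $\G$ (since $\varphi(bz^{-1}) \notin \G$ in general), a careful bookkeeping of the $\tp^{\Z}$-orbits of the zeros and poles of $a_{N,M}(z)$, together with the observation that the partition $\tp^{\Z_{\le 0}}$ versus $\tp^{\Z_{>0}}$ underlying the definition of $\Deg$ transforms predictably under inversion, delivers the required equality and completes the proof of (i).
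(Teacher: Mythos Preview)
The paper does not give its own proof of this proposition; it is quoted from \cite{KKOP19C}. So there is no in-paper argument to compare against, and your proposal should be judged on its own.

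Your proofs of (ii) and (iii) are correct and are exactly the intended one-line deductions from the preceding proposition $\La(M,N)=\La(N^*,M)$.

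For (i), however, there is a genuine gap. You correctly reduce the statement to the identity
\[
\Deg a_{M,N}(z)+\Deg a_{N,M}(z)=0,
\]
and you correctly invoke the unitarity relation $a_{M,N}(z)\,a_{N,M}(z^{-1})=1$. The problem is the passage from this relation to the displayed identity. The group $\G$ on which $\Deg$ is defined is a subgroup of $\cor((z))^\times$ built from the functions $\varphi(az)=\prod_{s\ge0}(1-\tp^s az)$; the substitution $z\mapsto z^{-1}$ sends such a product into $\cor((z^{-1}))$, so $a_{N,M}(z^{-1})$ is not an element of $\G$ and $\Deg a_{N,M}(z^{-1})$ is not defined. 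Your proposed workaround via ``careful bookkeeping of the $\tp^{\Z}$-orbits'' does not repair this: the very definition of $\Deg$ presupposes the $\varphi(az)$-factorisation inside $\cor((z))$, and there is no canonical way to transport the sign pattern $\tp^{\Z_{\le0}}$ versus $\tp^{\Z_{>0}}$ through the inversion. Likewise, writing $c_{M,N}(z)\,c_{N,M}(z^{-1})=d_{M,N}(z)\,d_{N,M}(z^{-1})\in\cor(z)^\times$ only tells you the $\Deg$ of the product; splitting it as $\Deg c_{M,N}(z)+\Deg c_{N,M}(z^{-1})$ is illegitimate for the same reason.

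A route that avoids this obstruction is to work instead with the duality relation for R-matrices (rather than unitarity): the identity $\La(M^*,N)=\La(N,M)$ that you already used for (ii) comes from a relation between $c_{M^*,N}(z)$ and $c_{N,M}(z)$ that stays inside $\G$ (both are honest elements of $\cor((z))$), so one can compare their $\Deg$ directly. Unwinding that relation at the level of the $a$- and $d$-factors is what yields (i) in \cite{KKOP19C}; the inversion $z\mapsto z^{-1}$ never enters.
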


\begin{corollary}  [\protect{\cite[Corollary 3.17 and 3.20]{KKOP19C}}]  \label{Cor: comm}
Let $M$ and $N$ be simple modules in $\uqm$.
\bnum
\item
Suppose that one of $M$ and $N$ is real.
Then $M$ and $N$ strongly commute if and only if
$\de(M,N)=0$.
\item
In particular, if $M$ is real, then $ \La(M,M)=0.$
\ee

\end{corollary}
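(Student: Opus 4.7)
The plan is to derive both statements directly by combining Theorem~\ref{Thm: basic properties}\eqref{it:comm} with the two identities established in Proposition~\ref{Prop: d and d}. Neither part requires a genuinely new argument; the work is in unpacking the definitions and specializing spectral parameters appropriately.

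For part (i), I would specialize Theorem~\ref{Thm: basic properties}\eqref{it:comm} at $x=y=1$. Under the hypothesis that one of $M$, $N$ is real, that theorem asserts that $M$ and $N$ strongly commute if and only if the product $d_{M,N}(z)\,d_{N,M}(1/z)$ is non-zero at $z=1$, i.e.\ if and only if $\zero_{z=1}\bigl(d_{M,N}(z)\,d_{N,M}(z^{-1})\bigr)=0$. By Proposition~\ref{Prop: d and d}(i), this vanishing order is exactly $\de(M,N)$, so the equivalence $M\otimes N\simeq N\otimes M$ simple $\Leftrightarrow$ $\de(M,N)=0$ is immediate. One minor point to verify is that the specialization $x=y=1$ is legitimate in the formulation of Theorem~\ref{Thm: basic properties}\eqref{it:comm}; this is unproblematic since the condition there is stated uniformly in the ratio $y/x$.

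For part (ii), assume $M$ is real, so by definition $M\otimes M$ is simple. Part (i) applied with $N=M$ (the hypothesis that one of the two modules is real is clearly satisfied) then yields $\de(M,M)=0$. Finally, Proposition~\ref{Prop: d and d}(ii) specializes to
\[
\de(M,M)=\tfrac{1}{2}\bigl(\La(M,M)+\La(M,M)\bigr)=\La(M,M),
\]
so $\La(M,M)=0$ as claimed.

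The step that would be the main obstacle is really not inside this corollary at all, but lies upstream: Proposition~\ref{Prop: d and d}(i), which identifies $\de(M,N)$ with $\zero_{z=1}\bigl(d_{M,N}(z)\,d_{N,M}(z^{-1})\bigr)$, and the symmetry statement Proposition~\ref{Prop: d and d}(ii), are the substantive ingredients. Once they are in hand, together with the denominator criterion for strong commutation from Theorem~\ref{Thm: basic properties}\eqref{it:comm}, the corollary is essentially a matter of bookkeeping. I would therefore write the argument as a short two-line deduction rather than a long case analysis.
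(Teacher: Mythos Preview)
Your derivation is correct. Note, however, that the paper does not itself prove this corollary: it is simply quoted from \cite[Corollary 3.17 and 3.20]{KKOP19C}, as are Theorem~\ref{Thm: basic properties}\eqref{it:comm} and Proposition~\ref{Prop: d and d}, so there is no ``paper's own proof'' to compare against. That said, your argument is exactly the intended deduction from those two cited inputs, and nothing more is needed.
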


\begin{proposition}   [\protect{\cite[Proposition 3.22]{KKOP19C}}] \label{Prop: La and d}
For simple modules $M$ and $N$ in $\uqm$,  we have 
\begin{align*}
\Lambda(M,N) & = \sum_{k \in \Z} (-1)^{k+\delta(k<0)} \de(M,\D^{k}N) , \\
\Lambda^\infty(M,N) &= \sum_{k \in \Z} (-1)^{k} \de(M,\D^{k}N),
\end{align*}
where $\D^kN$ is defined as follows:
$$
\D^kN \seteq
\begin{cases}
(\cdots(N^* \underbrace{ )^* \cdots )^* }_{\text{$(-k)$-times}} & \text{ if } k <0, \\
 \underbrace{\rd  ( \cdots ( }_{\text{$k$-times}} \rd N  )\cdots)  & \text{ if } k \ge 0.
\end{cases}
$$
\end{proposition}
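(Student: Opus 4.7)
The plan is to deduce both identities in Proposition~\ref{Prop: La and d} from a single telescoping identity for the quantities $2\de(M,\D^kN)$, with the two different sign-patterns $(-1)^{k+\delta(k<0)}$ and $(-1)^k$ producing the bulk vs.\ boundary contributions respectively. Throughout I would work under the assumption that both sums are finite (a fact I would also verify).

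First I would establish the key relation
$$
2\de(M,\D^kN) = \La(M,\D^kN) + \La(M,\D^{k+1}N) \qquad (k\in\Z).
$$
By the definition $2\de(M,N)=\La(M,N)+\La(M^*,N)$, and by iterating the identity $\La(A,B)=\La(B,\rd A)$ (from the proposition just before Proposition~\ref{Prop: subquotients for Li}), one computes $\La(M^*,N)=\La(N,\rd(M^*))=\La(N,M)=\La(M,\rd N)=\La(M,\D N)$. Substituting $\D^kN$ for $N$ and using $\D\circ\D^k=\D^{k+1}$ (which follows directly from $\rd\circ{}^* = \id$) yields the claim.

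Second I would extract the asymptotic behavior of $\La(M,\D^kN)$ as $|k|\to\infty$. Lemma~\ref{Lem:properties of Deg}~(iii) applied to $f(z)=c_{M,N}(z)$ gives $\Li(M,N)=\Deg c_{M,N}(\tp^{-n}z)=-\Deg c_{M,N}(\tp^nz)$ for $n\gg 0$; combined with the parameter-shift invariance (Proposition~\ref{Prop: para shift for La}) and the sign-flips $\Li(M,\rd N)=\Li(M,N^*)=-\Li(M,N)$ (Proposition~\ref{Prop: Li}), using $\D^{2n}N=N_{\tp^n}$ and $\D^{2n+1}N=(\rd N)_{\tp^n}$, I would obtain, for all $n\gg 0$,
\begin{align*}
\La(M,\D^{2n}N) &= -\Li(M,N), & \La(M,\D^{2n+1}N) &= \Li(M,N),\\
\La(M,\D^{-2n}N) &= \Li(M,N), & \La(M,\D^{-2n-1}N) &= -\Li(M,N).
\end{align*}
Combined with the key relation, this gives $\de(M,\D^kN)=0$ for $|k|\gg 0$, so both sums in the Proposition are finite.

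Finally I would perform the telescoping. Choose $K$ so large that $\de(M,\D^kN)=0$ for $|k|>K$. For any sequence of signs $\epsilon_k$, using the key relation and shifting index in the second summand,
$$
2\sum_{k=-K}^{K}\epsilon_k\,\de(M,\D^kN)
=\sum_{k=-K+1}^{K}(\epsilon_k+\epsilon_{k-1})\La(M,\D^kN)
+\epsilon_{-K}\La(M,\D^{-K}N)+\epsilon_K\La(M,\D^{K+1}N).
$$
For $\epsilon_k=(-1)^{k+\delta(k<0)}$ one checks that $\epsilon_k+\epsilon_{k-1}$ vanishes for $k\ne 0$ and equals $2$ at $k=0$, while the boundary term $(-1)^K\bigl[\La(M,\D^{K+1}N)-\La(M,\D^{-K}N)\bigr]$ vanishes for either parity of $K$ by the asymptotics above; this yields $2\La(M,N)$. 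For $\epsilon_k=(-1)^k$ every coefficient $\epsilon_k+\epsilon_{k-1}$ vanishes, so only the boundary $(-1)^K\bigl[\La(M,\D^{K+1}N)+\La(M,\D^{-K}N)\bigr]$ survives, equalling $2\Li(M,N)$ for either parity. Dividing by $2$ yields the two identities.

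The main obstacle is the careful tracking of parities and signs in the boundary terms: the subtle point is that the sum $\sum_k(-1)^k\de(M,\D^kN)$ appears at first to telescope to zero, and the non-trivial value $\Li(M,N)$ emerges only because $\La(M,\D^kN)$ does not vanish at infinity but rather stabilizes to $\pm\Li(M,N)$ in the asymptotic pattern above. Verifying that the two $\epsilon$-weightings pick out exactly the bulk and the boundary parts of the same telescoping identity is the heart of the proof.
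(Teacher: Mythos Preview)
Your proof is correct. The key recursion $2\de(M,\D^kN)=\La(M,\D^kN)+\La(M,\D^{k+1}N)$, the asymptotic stabilization $\La(M,\D^kN)\to\pm\Li(M,N)$ via Lemma~\ref{Lem:properties of Deg}~\ref{it:ninf}, and the telescoping with the two sign patterns all check out; the boundary analysis is handled carefully for both parities of $K$.

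Note, however, that the paper does not supply its own proof of this proposition: it is quoted verbatim from \cite[Proposition~3.22]{KKOP19C} and used as an input. So there is no ``paper's proof'' to compare against here. Your argument is in fact the natural one and is essentially how the result is established in the cited reference. One small point of precision: when you invoke ``parameter-shift invariance (Proposition~\ref{Prop: para shift for La})'' to identify $\La(M,\D^{2n}N)=\La(M,N_{\tp^n})$ with $\Deg\bigl(c_{M,N}(\tp^n z)\bigr)$, what you are really using is the elementary fact $c_{M,N_a}(z)=c_{M,N}(az)$ (up to a unit in $\cor[z^{\pm1}]^\times$), which follows directly from the definition of the renormalizing coefficient rather than from Proposition~\ref{Prop: para shift for La} as stated. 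This is a cosmetic issue and does not affect the validity of the argument.
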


\vskip 2em

\section{Root systems associated with $\Ca_\g$} \label{Sec: Root system}

Let $\Hom(\sig, \Z)$ be the set of $\Z$-valued functions on $\sig$. 
It is obvious that $\Hom(\sig, \allowbreak \Z)$ forms a torsion-free abelian group under addition.
Let $M \in \Ca_\g$ be a module such that $ \Runiv_{M, V(\varpi_i)_z} $ is rationally renormalizable for any $i\in I_0$. 
Then we define $ \rE(M) \in \Hom(\sig, \Z)$ by 
\begin{align}
\rE(M)(i,a)\seteq \Li(M, V(\varpi_i)_a) \qquad \text{ for } (i,a) \in \sig,
\end{align}
which is well-defined by $\eqref{Eq: equiv on I_0 k}$.

\begin{lemma} \label{Lem: prop for E}
Let $M$ and $N$ be simple modules in $\Ca_\g$. 
\bnum

\item \label{Lem: prop for E: item1}
 $  \rE(M) = -\rE(M^*) = - \rE(\rd M )$.
\item \label{Lem: prop for E: item2}
Let $\st{M_k}_{1\le k\le r}$ be a sequence of simple modules.
Then for any non-zero subquotient $S$ of $M_1\tens\cdots\tens M_r$, we have 
$$
\rE(S) =\sum_{k=1}^r\rE(M_k).
$$
\item \label{Lem: prop for E: item3}
$\rE(M) =\rE(N)$ if and only if $ \displaystyle \frac{a_{M,V(\varpi_i)}(z) }{a_{N,V(\varpi_i)}(z) } \in \cor(z)^\times $ for any $i\in I_0$. 
\ee
\end{lemma}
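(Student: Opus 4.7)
The plan is to reduce each claim to a property of $\Li$ by evaluating at $N = V(\varpi_i)_a$ and invoking the results from Section~\ref{Sec: new invariants}. For (i), Proposition~\ref{Prop: Li}(iii) gives $\Li(M^*, V(\varpi_i)_a) = -\Li(M, V(\varpi_i)_a)$, hence $\rE(M^*) = -\rE(M)$. For the left dual, I first use the symmetry $\Li(X, Y) = \Li(Y, X)$ of Proposition~\ref{Prop: Li}\eqref{Prop: Li: item2} to rewrite
\[
\rE(\rd M)(i,a) = \Li(\rd M, V(\varpi_i)_a) = \Li(V(\varpi_i)_a, \rd M),
\]
and then apply $\Li(X, \rd Y) = -\Li(X, Y)$ from Proposition~\ref{Prop: Li}(iii) to conclude $\rE(\rd M)(i,a) = -\Li(V(\varpi_i)_a, M) = -\rE(M)(i,a)$.

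For (ii), the main input is Proposition~\ref{Prop: subquotient for Li}, which provides the additivity $\Li(S, L) = \Li(M, L) + \Li(N, L)$ for any non-zero subquotient $S$ of $M \otimes N$, provided the relevant universal R-matrices are rationally renormalizable. The plan is to induct on $r$, writing $M_1 \otimes \cdots \otimes M_r = (M_1 \otimes \cdots \otimes M_{r-1}) \otimes M_r$ and setting $L = V(\varpi_i)_a$. Since $V(\varpi_i)_a$ and each simple $M_k$ give rise to rationally renormalizable universal R-matrices (Section~\ref{subsec:R}), and since applying the same proposition inductively to $M_1 \otimes \cdots \otimes M_{r-1}$ as a subquotient of itself yields rational renormalizability of $\Runiv_{M_1 \otimes \cdots \otimes M_{r-1},\, V(\varpi_i)_{a,z}}$, the induction closes and gives
\[
\rE(S)(i,a) = \sum_{k=1}^r \Li(M_k, V(\varpi_i)_a) = \sum_{k=1}^r \rE(M_k)(i,a).
\]

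For (iii), the key ingredient is Proposition~\ref{Prop: Li}(i), which says $\Li(X, Y) = -\Di(a_{X,Y}(z))$. The spectral-parameter shift for the universal coefficient gives $a_{M, V(\varpi_i)_a}(z) = a_{M, V(\varpi_i)}(az)$ up to a nonzero scalar, so that
\[
\rE(M)(i,a) - \rE(N)(i,a) = -\Di\!\left(\frac{a_{M, V(\varpi_i)}(az)}{a_{N, V(\varpi_i)}(az)}\right).
\]
Thus $\rE(M) = \rE(N)$ is equivalent to this expression vanishing for every $(i,a) \in \sig$. By Lemma~\ref{Lem:properties of Deg}\eqref{Lem: pdeg:4}, which characterizes $\cor(z)^\times$ inside $\Bg$ as those $f$ with $\Di(f(cz)) = 0$ for all $c \in \cor^\times$, this is precisely the condition $a_{M, V(\varpi_i)}(z) / a_{N, V(\varpi_i)}(z) \in \cor(z)^\times$ for every $i \in I_0$. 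The main subtlety I anticipate is verifying the spectral-parameter shift identity and the fact that the ratio lies in $\Bg$ so that $\Di$ applies, but both are standard consequences of the definitions of affinization and universal coefficient.
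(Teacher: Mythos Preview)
Your proof is correct and follows essentially the same approach as the paper's own proof. The paper is simply more terse, stating that (i) and (ii) follow from Proposition~\ref{Prop: Li} and Proposition~\ref{Prop: subquotient for Li}, and carrying out (iii) via the same reduction to $\Di(a_{M,V(\varpi_i)}(az)/a_{N,V(\varpi_i)}(az))=0$ together with Lemma~\ref{Lem:properties of Deg}\ref{Lem: pdeg:4}; your version just spells out the induction for (ii) and the spectral-parameter shift for (iii) that the paper leaves implicit.
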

\begin{proof}
\ref{Lem: prop for E: item1}
and  \ref{Lem: prop for E: item2}
easily follow from Proposition \ref{Prop: Li} and Proposition \ref{Prop: subquotient for Li}.

Let us show \ref{Lem: prop for E: item3}.
For $(i,a) \in \sig$, the condition
$\Li(M, V(\varpi_i)_a) = \Li(N, V(\varpi_i)_a)$  is equivalent to
$$
\Di(a_{M, V(\varpi_i)} (az) ) = \Di(a_{N, V(\varpi_i)}(az) ).
$$
Since $\Di\col \Bg \rightarrow \Z$ is a group homomorphism, it is equivalent to
$$
\Di \left( \frac{a_{M, V(\varpi_i)}(az)}{a_{N, V(\varpi_i)}(az)} \right)=0 \qquad \text{ for any }a\in \cor^\times.
$$
Then \ref{Lem: prop for E: item3} follows from 
Lemma~\ref{Lem:properties of Deg}\;\ref{Lem: pdeg:4}. 
\end{proof}

For $(i,a) \in \sig$, we set 
\begin{align*}
\rs_{i,a} \seteq \rE(V(\varpi_i)_a) \in \Hom(\sig, \Z),
\end{align*}
and 
\begin{equation} \label{Eq: W and Delta}
\begin{aligned}
\rW \seteq \{  \rE(M) \mid M \text{ is simple in } \Ca_\g\}, & \qquad \Delta \seteq \{ \rs_{i,a} \mid (i,a) \in \sig  \} \subset \rW, \\
\rW_0 \seteq \{  \rE(M) \mid M \text{ is simple in } \Ca_\g^0 \}, & \qquad \Delta_0 \seteq \{ \rs_{i,a} \mid (i,a) \in \sigZ  \} \subset \rW_0.
\end{aligned}
\end{equation}
It is obvious that $\rW_0 \subset \rW $ and $\Delta_0 \subset \Delta$.

\Lemma \label{Lem: (,)on W}
We have
\bnum
\item \label{Prop: sig and rW: item3} $\rW = \sum_{(i,a) \in \sig} \Z \rs_{i,a} $   and $ \rW_0 = \sum_{(i,a) \in \sigZ} \Z \rs_{i,a}=\sum_{(i,a) \in \sigQ} \Z \rs_{i,a}$.
In particular, $\rW_0$ is a finitely generated free $\Z$-module. 
\item There exists a unique symmetric bilinear form
$(-,-)$ on $\rW$ such that
\begin{align*}
( \rE(M), \rE(N) ) =  - \Li(M,N),
\end{align*}
for any simple modules $M,N \in \Ca_\g$.
\ee
\enlemma

\Proof
(i) follows from Theorem \ref{Thm: basic properties} \ref{Thm: bp5},
Lemma~\ref{Lem: prop for E} and \eqref{Eq: sigZ and sigQ}.

\sn
Let us show (ii).
By Corollary~\ref{Cor: Li and s},  it is reduced to the existence of
the bilinear form $(-,-)$ on $\rW$ such that
$$\bl\rs_{i,a},\rs_{j,b}\br=-\Li\bl V(\vpi_i)_a,V(\vpi_j)_b\br.$$
Therefore it is enough to show that
for a sequence $\st{(i_k,a_k)}_{k=1,\ldots,r}$ in $\sig$ such that
$\sum_{k=1}^r\rs_{i_k,a_k}=0$, we have
$\sum _{k=1}^r\Li\bl V(\vpi_{i_k})_{a_k},V(\vpi_j)_b\br=0$ for any $(j,b)\in\sig$.
Let us take a simple subquotient $M$ of
$ V(\vpi_{i_1})_{a_1}\tens\cdots   \tens V(\vpi_{i_r})_{a_r}$.
Then we have
$\rE(M)= \sum_{k=1}^r\rs_{i_k,a_k}=0$. Hence we obtain
\eqn
\sum _{k=1}^r\Li\bl V(\vpi_{i_k})_{a_k},V(\vpi_j)_b\br&&=
\Li(M,V(\vpi_j)_b)=
-\rE(M)(j,b)=0.
\eneqn

\QED

\begin{lemma} \label{Lem: Li = -2}
For $i\in I_0$ and $a\in\cor^\times$, we have 
\eq
\de( V(\varpi_i), \dual^k V(\varpi_{i}))= \delta(k=\pm1)\qt{for $k\in\Z$.}
\label{eq:dfund}
\eneq
In particular, we have
$$
(\rs_{i,a},\rs_{i,a})=-\Li( V(\varpi_i),  V(\varpi_{i}) ) = 2.
$$
\end{lemma}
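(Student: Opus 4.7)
The plan is to prove \eqref{eq:dfund} first and then derive the ``in particular'' assertion from it. For that derivation, Proposition~\ref{Prop: para shift for La} gives $\Li(V(\varpi_i)_a, V(\varpi_i)_a) = \Li(V(\varpi_i), V(\varpi_i))$, and the second identity of Proposition~\ref{Prop: La and d} expands
$$
\Li(V(\varpi_i), V(\varpi_i)) \;=\; \sum_{k\in\Z}(-1)^k\,\de\bigl(V(\varpi_i),\,\D^k V(\varpi_i)\bigr).
$$
Granting \eqref{eq:dfund}, only the terms $k=\pm 1$ survive, each contributing $-1$, so $\Li(V(\varpi_i),V(\varpi_i))=-2$ and hence $(\rs_{i,a},\rs_{i,a}) = -\Li = 2$ by the defining property of the form in Lemma~\ref{Lem: (,)on W}.

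For \eqref{eq:dfund} itself, I would first unpack $\D^k V(\varpi_i)$ via the rigidity relations $M^{**}\simeq M_{\tp^{-1}}$ and ${}^{**}\ms{-1mu}M\simeq M_{\tp}$, combined with \eqref{eq: dual}:
$$
\D^{2m} V(\varpi_i)\;\simeq\; V(\varpi_i)_{\tp^{\,m}}, \qquad \D^{2m+1} V(\varpi_i)\;\simeq\; V(\varpi_{i^*})_{(p^*)^{2m+1}}\qquad(m\in\Z).
$$
Proposition~\ref{Prop: d and d}\,(i) then converts each $\de(V(\varpi_i),\D^k V(\varpi_i))$ into the order of zero at $z=1$ of a shifted product of two denominators, involving $d_{V(\varpi_i),V(\varpi_i)}$ when $k$ is even and $d_{V(\varpi_i),V(\varpi_{i^*})}$ when $k$ is odd.

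Three of the four regimes are quickly disposed of. For $k=0$, the module $V(\varpi_i)$ is real (it is a good module, and $z=1$ is not a root of $d_{V(\varpi_i),V(\varpi_i)}(z)$ by Theorem~\ref{Thm: basic properties}\,(i)--(ii)), so Corollary~\ref{Cor: comm}\,(ii) gives $\La(V(\varpi_i),V(\varpi_i))=0$, whence $\de(V(\varpi_i),V(\varpi_i))=0$ by Proposition~\ref{Prop: d and d}\,(ii). For even $k=2m$ with $m\ne 0$ and for odd $k=2m+1$ with $|2m+1|\ge 3$, I would inspect Appendix~\ref{App: denominators}: the roots of the relevant denominator are powers of $q$ whose exponents lie in a bounded window whose length is controlled by the dual Coxeter number $\langle c,\rho\rangle$, whereas $\tp^{\pm m}=q^{\pm 2m\langle c,\rho\rangle}$ and $(p^*)^{\pm(2m+1)}$ with $|2m+1|\ge 3$ sit outside it; hence no denominator vanishes and $\de=0$.

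The crux is $k=\pm 1$. A rigidity pairing $V(\varpi_i)\otimes \rd V(\varpi_i)\to\trivial$ shows the tensor product is not simple, so Corollary~\ref{Cor: comm}\,(i) forces $\de(V(\varpi_i),\D V(\varpi_i))\ge 1$; the case $k=-1$ is then handled by the symmetry of $\de$ from Proposition~\ref{Prop: d and d}\,(iii) together with the analogous pairing $V(\varpi_i)^*\otimes V(\varpi_i)\to\trivial$. The matching upper bound $\de\le 1$ is the main obstacle and is where the bulk of the work lies: across every affine type tabulated in Appendix~\ref{App: denominators}, one reads that $d_{V(\varpi_i),V(\varpi_{i^*})}(z)$ has a simple zero at exactly one of the points $p^*,\,(p^*)^{-1}$, while the companion factor $d_{V(\varpi_{i^*}),V(\varpi_i)}(z^{-1})$ does not contribute a zero at the reciprocal point, so the combined zero order at $z=1$ equals precisely $1$. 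This type-by-type inspection is routine but unavoidable.
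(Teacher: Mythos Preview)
Your proposal is correct and follows essentially the same route as the paper: reduce $\Li=-2$ to \eqref{eq:dfund} via Proposition~\ref{Prop: La and d}, then establish \eqref{eq:dfund} by converting $\de$ to zero-orders of denominator products (Proposition~\ref{Prop: d and d}) and checking, using the explicit formulas in Appendix~\ref{App: denominators}, that roots of $d_{i,j}$ lie in a window of $q$-exponents bounded by the dual Coxeter number, so only $k=\pm1$ can contribute. One small difference: for the simply-laced $ADE$ case the paper shortcuts the $k=\pm1$ computation via the identity $\tcmc_{i,i^*}(h^\vee-1)=\tcmc_{i,i}(1)=1$ (using \cite[Lemma~3.7]{Fu19} and Proposition~\ref{Prop: formula for ADE}) rather than inspecting each $ADE$ type separately; your rigidity argument giving the lower bound $\de\ge1$ is a nice conceptual addition the paper does not make explicit.
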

\begin{proof}
The statement $\Li( V(\varpi_i),  V(\varpi_{i}) ) = -2$  follows from
\eqref{eq:dfund} and Proposition \ref{Prop: La and d}.

Let us show \eqref{eq:dfund}.
Let $h^\vee$ be the dual Coxeter number of $\g$, and write 
$$
d_{i,j}(z) \seteq d_{V(\varpi_i), V(\varpi_j)} (z)  \qquad \text{ for } i,j \in I.
$$ 
The denominator formula for $d_{i,j}(z)$ is written in Appendix \ref{App: denominators}.
Using this formula, one can easily check that, if $\epsilon q^t$ ($ |\epsilon|=1 $) is a zero of $d_{i,i}(z)$, then 
$t$ should be between $1$ and $h^\vee$.
Combining this with Proposition \ref{Prop: d and d},  we obtain 
\begin{align*}
\de( V(\varpi_i),  V(\varpi_{j})_{ (p^*)^{k}} ) = \zero_{z=1}\bl d_{i,j }( (p^*)^k z) d_{j,i}( (p^*)^{-k} z^{-1})\br = 0
 \qquad \text{ unless } k=\pm 1.
\end{align*}

Now we shall show $\de( V(\varpi_i), \dual^{\pm1} V(\varpi_{ i }) )=1$.

\sn
\textbf{(Case of simply-laced affine $ADE$ type)}
 \quad In this case, the dual Coxeter number is equal to the Coxeter number.
It follows from the denominator formula written in Appendix \ref{App: denominators}, we have 
 $$
\de(  V(\varpi_{i}),\dual^{\pm1} V(\varpi_{i})) = \tcmc_{i,i^*}(h^\vee-1).
 $$
 Since $ \tcmc_{i,j}(k) = \tcmc_{j,i^*}(h^\vee - k) $ for $1 \le k \le h^\vee-1$ (see \cite[Lemma 3.7]{Fu19})
 and $\tcmc_{i ,i}( 1)=1$ by Proposition \ref{Prop: formula for ADE},  we have 
 $$
 \de(  V(\varpi_{i}),\dual^{\pm1} V(\varpi_{i}))=   \tcmc_{i,i}( 1)= 1.
 $$

\sn
\textbf{(Other case)}\quad  
 In this case, we know that  $i^* = i$ for any $i\in I_0$. Thus we have 
 \begin{align*}
\de(  V(\varpi_i), \dual^{\pm1} V(\varpi_{i})) = 
\de( V(\varpi_i),  V(\varpi_{i })_{  p^* } ).
\end{align*}
 Using $\eqref{Table: p*}$ and the denominator formula written in Appendix \ref{App: denominators}, 
 one can compute directly
 $$
 \de( V(\varpi_i),  V(\varpi_{i })_{  p^* } ) = 1.
 $$
\end{proof}

For $t \in \cor^\times$, $(i,a) \in \sig$ and $f \in  \Hom(\sig, \Z)$, 
we define 
\begin{align} \label{Eq: tau}
 \tau_t( i,a ) \seteq  ( i, ta ) \quad \text{and} \quad  (\tau_t f)( i, a) \seteq f(i,t^{-1}a).
\end{align}

\begin{lemma} \label{Lem: tau_t} \
\bnum
\item For $(i,a) \in \sig$, we have $ \rs_{i,a}  = - \rs_{i^*,a p^* } = - \rs_{i^*,a (p^*)^{-1} } $.
\item For $t\in \cor^\times$ and $(i,a) \in \sig$, we have $ \tau_t (\rs_{i,a}) = \rs_{i,ta} $.
\ee
\end{lemma}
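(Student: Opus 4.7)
The plan is to deduce both parts directly from results already in hand, without needing any new computation with R-matrices or denominators; the key inputs are Lemma~\ref{Lem: prop for E}\,\ref{Lem: prop for E: item1} on how $\rE$ interacts with left/right duals, the duality formula~\eqref{eq: dual} for fundamental modules, and Proposition~\ref{Prop: para shift for La} on the invariance of $\Li$ under a common spectral shift.

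For part~(i), I would start from $\rs_{i,a}=\rE(V(\vpi_i)_a)$ and apply Lemma~\ref{Lem: prop for E}\,\ref{Lem: prop for E: item1} to replace $\rE$ of a module by $-\rE$ of its right dual, and separately by $-\rE$ of its left dual. This gives
\[
\rs_{i,a}=-\rE\bigl((V(\vpi_i)_a)^*\bigr)=-\rE\bigl({}^*(V(\vpi_i)_a)\bigr).
\]
Then~\eqref{eq: dual} identifies $(V(\vpi_i)_a)^*\simeq V(\vpi_{i^*})_{(p^*)^{-1}a}$ and ${}^*(V(\vpi_i)_a)\simeq V(\vpi_{i^*})_{p^*a}$, and the right-hand sides turn into $-\rs_{i^*,(p^*)^{-1}a}$ and $-\rs_{i^*,p^*a}$ respectively, yielding both equalities of part~(i).

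For part~(ii), I would unwind the definition of $\tau_t$ in~\eqref{Eq: tau}: for any $(j,b)\in\sig$,
\[
(\tau_t\rs_{i,a})(j,b)=\rs_{i,a}(j,t^{-1}b)=\Li\bigl(V(\vpi_i)_a,V(\vpi_j)_{t^{-1}b}\bigr).
\]
Applying Proposition~\ref{Prop: para shift for La} with common spectral shift by $t$, this equals $\Li(V(\vpi_i)_{ta},V(\vpi_j)_b)=\rs_{i,ta}(j,b)$. Since this holds for all $(j,b)\in\sig$, the functions $\tau_t\rs_{i,a}$ and $\rs_{i,ta}$ coincide on $\sig$.

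I do not expect any real obstacle here; both statements are essentially bookkeeping consequences of the previously established functorial properties of $\rE$ and $\Li$, combined with the known duality shifts of fundamental representations. The one place to be a little careful is matching the two duality conventions in~\eqref{eq: dual} so that each of the two sign choices in part~(i) corresponds to the correct shift by $(p^*)^{\pm1}$, but this is routine.
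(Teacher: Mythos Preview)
Your proposal is correct and matches the paper's own proof essentially line for line: part~(i) is deduced from Lemma~\ref{Lem: prop for E}\,\ref{Lem: prop for E: item1} together with~\eqref{eq: dual}, and part~(ii) is the same pointwise computation using the definition~\eqref{Eq: tau} and Proposition~\ref{Prop: para shift for La}.
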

\begin{proof}

(i) follows from $\eqref{eq: dual}$ and Lemma \ref{Lem: prop for E}.

\sn
(ii) For $(j,b)\in \sig$, we have 
 \begin{align*}
(\tau_t (\rs_{i,a})) (j,b) &=  (\rs_{i,a})  (j,t^{-1}b) =  \Li( V(\varpi_i)_a, V(\varpi_j)_{t^{-1}b} )
 =  \Li( V(\varpi_i)_{ta}, V(\varpi_j)_{b} ) \\
& = (\rs_{i,ta})  (j, b),
\end{align*}
where the third equality follows from Proposition \ref{Prop: para shift for La}.
Thus, we have the desired assertion.
\end{proof}

For  $t\in\cor^\times$,   $A \subset \sig$ and $ F \subset  \Hom(\sig, \Z)$, we set 
$$
 A _t \seteq \{ \tau_t(a) \mid a \in A \} \quad \text{ and } \quad   F_t \seteq \{ \tau_t(f) \mid f\in F \}.
$$
We write $\scor$ for  the  stabilizer subgroup of $\sigZ$ with 
respect to the action of $\cor^\times$ on $\sig$ through $\tau_t$, i.e.,
\begin{align*}
\scor \seteq \{ t \in \cor^\times \mid ( \sigZ )_t = \sigZ \}.
\end{align*}

\begin{proposition} \label{Prop: sig and rW}
We have the following.
\bnum
\item \label{Prop: sig and rW: item1} $ \sig = \bigsqcup_{a\in \cor^\times / \scor  }
\bl\sigZ\br_a $.
\item $ \Delta = \bigsqcup_{a\in \cor^\times / \scor  }  (\Delta_0)_a $.
\item \label{Prop: sig and rW: item4}  For $k,k'\in\cor^\times$ such that $k/k'\not\in\scor$, we have
$\bl(\rW_0)_k,(\rW_0)_{k'}\br=0$.
\ee
\end{proposition}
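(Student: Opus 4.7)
The plan is to exploit the fact that the action of $\cor^\times$ on $\sig$ via $\tau_t$ is by quiver automorphisms, since the multiplicity of arrows between two vertices of $\sig$ depends only on the ratio of their spectral parameters. The three parts of the proposition then become statements about how connected components, fundamental roots $\rs_{i,a}$, and values of $\Li$ behave under this translation action.

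For \textbf{(i)}, the assignment $a\scor \mapsto (\sigZ)_a$ gives an injection from $\cor^\times/\scor$ into the set of connected components of $\sig$ (well-defined and injective directly from the definition of $\scor$). Surjectivity amounts to the statement that every connected component of $\sig$ is a $\cor^\times$-translate of $\sigZ$; this is the precise content of the remark in Section \ref{Sec: HL cat} that a connected component of $\sig$ is unique up to a spectral parameter shift, and is also immediate from the explicit case-by-case descriptions of $\sigZ$ given there.

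For \textbf{(ii)}, I combine (i) with Lemma \ref{Lem: tau_t}(ii), which yields $\tau_a(\rs_{i,b}) = \rs_{i,ab}$ and hence $(\Delta_0)_a = \{\rs_{i,c}\mid (i,c)\in (\sigZ)_a\}$, to obtain the covering $\Delta = \bigcup_{a\in \cor^\times/\scor}(\Delta_0)_a$. For disjointness, suppose $\rs_{i,x} = \rs_{j,y}$ with $(i,x)\in (\sigZ)_a$ and $(j,y)\in (\sigZ)_b$. Evaluating both sides at $(i,x)$ and using Lemma \ref{Lem: Li = -2}, one gets $\rs_{j,y}(i,x) = -2 \ne 0$, so $\Li(V(\varpi_j)_y, V(\varpi_i)_x) \ne 0$. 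Expanding via Proposition \ref{Prop: La and d}, there exists $\ell \in \Z$ with $\de(V(\varpi_j)_y, V(\varpi_{i^{*\ell}})_{x(p^*)^\ell}) \ne 0$; by Proposition \ref{Prop: d and d}(i) this forces $(j,y)$ and $(i^{*\ell}, x(p^*)^\ell)$ to share a connected component of $\sig$. Since $\sigZ$ is stable under the map $(i,a)\mapsto(i^*, a p^*)$ (a quiver automorphism that commutes with every $\tau_t$, verifiable directly from the explicit descriptions in Section \ref{Sec: HL cat}), $(i^{*\ell}, x(p^*)^\ell)$ lies in the same component as $(i,x)$. Hence $(\sigZ)_a = (\sigZ)_b$, so $a\scor = b\scor$.

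For \textbf{(iii)}, by bilinearity and the definition of $(-,-)$ in Lemma \ref{Lem: (,)on W}(ii), the vanishing $((\rW_0)_k, (\rW_0)_{k'}) = 0$ reduces to showing $\Li(V(\varpi_i)_x, V(\varpi_j)_y) = 0$ whenever $(i,x)\in (\sigZ)_k$ and $(j,y)\in (\sigZ)_{k'}$. Applying Proposition \ref{Prop: La and d} together with Proposition \ref{Prop: d and d}(i) once more, it suffices to show that for every $\ell \in \Z$ the vertices $(i,x)$ and $(j^{*\ell}, y(p^*)^\ell)$ are not linked in $\sig$. By the stability of $\sigZ$ under the dual map and the fact that it commutes with $\tau_{k'}$, the vertex $(j^{*\ell}, y(p^*)^\ell)$ still lies in $(\sigZ)_{k'}$, which is a connected component distinct from $(\sigZ)_k$ by part (i) combined with the hypothesis $k/k'\notin \scor$. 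Hence the two vertices are not linked, every $\de$ in the expansion vanishes, and $\Li = 0$. The main obstacle throughout is the stability of $\sigZ$ under the dual map, needed in both (ii) and (iii); a uniform structural argument (for instance via rigidity of $\Ca_\g^0$) would be preferable, but case-by-case verification from the concrete descriptions of $\sigZ$ in Section \ref{Sec: HL cat} is sufficient.
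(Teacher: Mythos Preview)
Your proof is correct and rests on the same key mechanism as the paper's: every $\de$-invariant between fundamental modules indexed by distinct connected components vanishes, because the dual shift $(i,a)\mapsto(i^*,ap^*)$ preserves connected components of $\sig$.

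The paper organises matters a bit more economically. It proves (iii) first, via exactly your argument, and then obtains the disjointness in (ii) in one line: if $\rs_{i,a}=\rs_{j,b}$ with $(i,a)\in\sigZ$ and $(j,b)\in(\sigZ)_k$ for $k\notin\scor$, then $(\rs_{i,a},\rs_{i,a})=2$ by Lemma~\ref{Lem: Li = -2} while $(\rs_{i,a},\rs_{j,b})=0$ by (iii), a contradiction. Your direct argument for (ii) essentially reproves the special case of (iii) needed; the paper's ordering avoids that duplication. As for the stability of $\sigZ$ under the dual map, which both arguments need and which you flag as an obstacle, a uniform proof is available without case analysis: since taking duals is an anti-equivalence on $\Ca_\g$, the tensor product $V(\varpi_i)_a\otimes V(\varpi_j)_b$ is reducible if and only if $V(\varpi_{j^*})_{b(p^*)^{-1}}\otimes V(\varpi_{i^*})_{a(p^*)^{-1}}$ is, and for fundamental modules reducibility of the tensor product is a symmetric relation (Theorem~\ref{Thm: basic properties}\,\ref{it:comm}). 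Hence the dual map is an automorphism of the underlying graph of $\sig$ and therefore permutes connected components.
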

\begin{proof}
\noi 
\ref{Prop: sig and rW: item1} follows from the fact that
any connected component of $\sig$ is a translation of $\sigZ$. 

\sn[1.5ex]
\ref{Prop: sig and rW: item4}\ It is enough to show that,
for $(i,a)\in(\sigZ)_k$ and $(j,b)\in(\sigZ)_{k'}$, we have
$(\rs_{i,a},\rs_{  j,b })=0$.

By the definition of $\sigZ$, 
$V(\varpi_i)_a$ and  $\dual^mV(\varpi_j)_b$ strongly  commute for any $m$, 
which tells us that 
$$
\Li(V(\varpi_i)_a, V(\varpi_j)_b)=0
$$
by Corollary \ref{Cor: comm} and Proposition \ref{Prop: La and d}.

\sn
(ii)\ It is enough to show
$$
\Delta_0\cap (\Delta_0)_k=\emptyset
\qt{for $k\in \cor^\times / \scor$.}
$$
For $(i,a) \in \sigZ$ and $(j,b) \in \sigZ_k$, 
we have
$(\rs_{i,a},\rs_{i,a})= 2$ by Lemma~\ref{Lem: Li = -2}
and $(\rs_{i,a},\rs_{j,b}) = 0$ by \ref{Prop: sig and rW: item4}.
Thus we conclude that  $\rs_{i,a} \ne \rs_{j,b}$.
\end{proof}

We set 
$$
\rES \seteq \R \otimes_\Z \rW \quad \text{ and }\quad \rES_0 \seteq \R \otimes_\Z \rW_0.
$$
Then the pairing $(-,-)$ gives a symmetric bilinear form on $\rES$. 
Theorem \ref{Thm: root system} below is the main theorem of this section whose proof is postponed 
until Section\;\ref{Sec: proofs}.

\begin{theorem} \label{Thm: root system} \
\bnum
\item The pair $(\rES_0, \Delta_0)$ is an irreducible 
simply-laced root system of the following type.

\renewcommand{\arraystretch}{1.5}
\begin{align} \label{Table: root system} \small
\begin{array}{|c||c|c|c|c|c|c|c|} 
\hline
\text{Type of $\g$} & A_n^{(1)}  & B_n^{(1)} & C_n^{(1)} & D_n^{(1)} & A_{2n}^{(2)} & A_{2n-1}^{(2)} & D_{n+1}^{(2)}  \\
&(n\ge1)&(n\ge2)&(n\ge3)&(n\ge4)&(n\ge1)&(n\ge2)&(n\ge3)\\
\hline
\text{Type of $(\rES_0, \Delta_0)$} & A_n & A_{2n-1}    & D_{n+1}   &  D_n & A_{2n} & A_{2n-1} & D_{n+1}  \\
\hline
\hline
\text{Type of $\g$} & E_6^{(1)}  & E_7^{(1)} & E_8^{(1)} & F_4^{(1)} & G_{2}^{(1)} & E_{6}^{(2)} & D_{4}^{(3)}  \\
\hline
\text{Type of $(\rES_0, \Delta_0)$} & E_6 & E_{7}    & E_{8}   & E_6 & D_{4} & E_{6} & D_{4}  \\
\hline
\end{array}
\end{align}
\vskip 0.7em

\item The bilinear form $(-,-)\vert_{\rW_0}$ is positive-definite.
Moreover, it is Weyl group invariant, i.e. $s_\al(\Delta_0)\subset \Delta_0$
for any $\al\in\Delta_0$. Here $s_\al\in\End(\rES_0)$ is the reflection
defined by $s_\al(\la)=\la-(\al,\la)\al$.

\ee
\end{theorem}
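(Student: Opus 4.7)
The plan is to use the bijection $\phi_Q\col\Df\isoto\sigQ$ from \eqref{Eq: phi} to transport the root system of $\gf$ onto $(\rES_0,\Delta_0)$. Fix a system of simple roots $\st{\al_i}{i\in\If}$ of $\gf$ and consider the $\Z$-linear map $\iota\col\mathsf{Q}_\gf\to\rW_0$ sending $\al_i$ to $\rs_{\phi_Q(\al_i)}$. The goal is to show that $\iota$ is an isometry for $(-,-)$ on $\rW_0$ and the standard invariant form on $\mathsf{Q}_\gf$, and that it maps the positive roots of $\gf$ bijectively onto $\st{\rs_{\phi_Q(\be)}}{\be\in\Df}$.

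The first step is to establish the identity $\iota(\be)=\rs_{\phi_Q(\be)}$ for every $\be\in\Df$, which I would prove by induction on the height of $\be$ with respect to the convex order $\prec_Q$. If $\be$ is simple, the claim holds by definition of $\iota$; otherwise, I would choose a minimal pair $(\ga,\ga')$ of $\be$, realize $V_Q(\be)$ as the head of $V_Q(\ga)\tens V_Q(\ga')$ via Proposition~\ref{Prop: CQ categorification}, and invoke Lemma~\ref{Lem: prop for E}\,\ref{Lem: prop for E: item2} to conclude $\rs_{\phi_Q(\be)}=\rs_{\phi_Q(\ga)}+\rs_{\phi_Q(\ga')}$, matching the relation $\be=\ga+\ga'$ in $\mathsf{Q}_\gf$.

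The principal step is the Gram matrix computation: for $i,j\in\If$, one must verify the identity $\bl\rs_{\phi_Q(\al_i)},\rs_{\phi_Q(\al_j)}\br=(\al_i,\al_j)_\gf$, the right-hand side being the $(i,j)$-entry of the Cartan matrix of $\gf$ in its simply-laced normalization. Diagonal entries equal $2$ by Lemma~\ref{Lem: Li = -2}. Off-diagonal entries are unpacked via Proposition~\ref{Prop: La and d} into an alternating sum of $\de$'s, and each $\de\bl V(\vpi_i)_a,V(\vpi_j)_b\br$ is then computed as the order of zero at $z=1$ of $d_{V(\vpi_i),V(\vpi_j)}\bl(b/a)z\br\,d_{V(\vpi_j),V(\vpi_i)}\bl(a/b)z^{-1}\br$ via Proposition~\ref{Prop: d and d}, using the explicit denominator formulas of Appendix~\ref{App: denominators} together with the description of $\sigQ$ in Section~\ref{Sec: CaQ}. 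One must check that the off-diagonal entries are always $0$ or $-1$, and that the resulting Cartan matrix matches the simply-laced type of Table~\eqref{Table: root system}. Being a case-by-case analysis over the $14$ affine types with their respective $Q$-data and denominator formulas, this constitutes the principal technical difficulty and is the content of Section~\ref{Sec: proofs}.

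Granting the Gram matrix computation, $\iota$ is an isometry onto $\rW_0$: injective because the Cartan matrix of finite type is non-degenerate, and surjective because $\rW_0$ is generated over $\Z$ by $\st{\rs_{\phi_Q(\be)}}{\be\in\Df}=\iota(\Df)$ by Lemma~\ref{Lem: (,)on W}\,(i) and the previous step. Positive-definiteness of $(-,-)\vert_{\rW_0}$ is then immediate. It remains to identify $\Delta_0$ with the full root system of $\gf$; the decomposition $\sigZ=\bigsqcup_{k\in\Z}\sigQ^{*k}$ of \eqref{Eq: sigZ and sigQ}, together with $\rs_{i^*,\,ap^*}=-\rs_{i,a}$ (Lemma~\ref{Lem: tau_t}\,(i)) and $\rs_{i,\,\tp a}=\rs_{i,a}$ (from Lemma~\ref{Lem: prop for E}\,\ref{Lem: prop for E: item1} applied twice, using ${}^{**}\ms{-1mu}M\simeq M_{\tp}$ and $(p^*)^2=\tp$), yields $\Delta_0=\st{\pm\rs_{\phi_Q(\be)}}{\be\in\Df}$, which $\iota$ identifies with the full root system of the type in Table~\eqref{Table: root system}. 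Weyl group invariance $s_\al(\Delta_0)\subset\Delta_0$ is then a formal property of any finite crystallographic root system.
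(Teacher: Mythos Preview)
Your proposal is correct and follows essentially the same strategy as the paper's Section~\ref{Sec: proofs}: reduce everything to the Gram matrix identity \eqref{eq:isometry}, verify it type by type using the denominator formulas and the explicit description of $\phi_Q$, and then deduce that the map from $\rlq$ to $\rW_0$ is an isometric isomorphism. One minor difference: to conclude that $\Delta_0$ is the full root system, the paper implicitly uses that in an ADE root lattice the norm-$2$ vectors are exactly the roots, whereas you give the more explicit argument via the decomposition $\sigZ=\bigsqcup_k\sigQ^{*k}$ and the sign flip $\rs_{i^*,ap^*}=-\rs_{i,a}$; both work and yield the same conclusion.
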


The corollary below follows from Proposition \ref{Prop: sig and rW} and Theorem \ref{Thm: root system}.

\begin{corollary} \label{Cor: root system for W}
We have
\bnum
\item $
\rW = \bigoplus_{k \in \cor^\times/ \scor }  (\rW_0)_k$.

\item
As a root system, $( (\rES_0)_k,  (\Delta_0)_k)$ is isomorphic to $(\rES_0, \Delta_0)$ for $k \in \cor^\times/ \scor$, and 
$$
(\rES, \Delta) = \bigsqcup_{k \in \cor^\times/\scor} ( (\rES_0)_k,  (\Delta_0)_k).
$$
\ee
\end{corollary}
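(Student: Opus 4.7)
The plan is to deduce both parts of the corollary as nearly formal consequences of Proposition \ref{Prop: sig and rW}, Theorem \ref{Thm: root system}, and the compatibility of the spectral-parameter shift $\tau_t$ with the bilinear form and with the generators $\rs_{i,a}$.

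For (i), the first step is to write $\rW = \sum_{(i,a) \in \sig} \Z \rs_{i,a}$ by Lemma \ref{Lem: (,)on W}(i) and then apply the partition $\sig = \bigsqcup_{k \in \cor^\times/\scor}(\sigZ)_k$ from Proposition \ref{Prop: sig and rW}(i). Lemma \ref{Lem: tau_t}(ii) identifies the contribution from each coset as $(\rW_0)_k$, yielding $\rW = \sum_k (\rW_0)_k$. To upgrade this to a direct sum I would combine the orthogonality $\bigl((\rW_0)_k,(\rW_0)_{k'}\bigr) = 0$ for $k \ne k'$ in $\cor^\times/\scor$ from Proposition \ref{Prop: sig and rW}(iii) with the positive definiteness of $(-,-)$ on each shifted copy. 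The positive definiteness on $(\rW_0)_k$ is transported from Theorem \ref{Thm: root system}(ii) via $\tau_k$ (see the paragraph on (ii) below). Then from a hypothetical relation $\sum_k w_k = 0$ with $w_k \in (\rW_0)_k$, pairing with $w_{k_0}$ yields $(w_{k_0}, w_{k_0}) = 0$, hence $w_{k_0} = 0$, and directness follows.

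For (ii), the key observation is that the shift $\tau_k$ induces an isomorphism of root lattices $\rW_0 \xrightarrow{\sim} (\rW_0)_k$. Indeed, Lemma \ref{Lem: tau_t}(ii) shows $\tau_k$ sends the generating set $\Delta_0$ bijectively onto $(\Delta_0)_k$, and Proposition \ref{Prop: para shift for La} shows $\tau_k$ preserves the bilinear form:
\[
(\tau_k \rs_{i,a}, \tau_k \rs_{j,b}) = -\Li\bigl(V(\varpi_i)_{ka}, V(\varpi_j)_{kb}\bigr) = -\Li\bigl(V(\varpi_i)_a, V(\varpi_j)_b\bigr) = (\rs_{i,a}, \rs_{j,b}).
\]
Thus $\tau_k$ transports the root-system structure from $(\rES_0, \Delta_0)$ to $((\rES_0)_k, (\Delta_0)_k)$. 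Tensoring (i) with $\R$ and combining with the disjoint decomposition $\Delta = \bigsqcup_k (\Delta_0)_k$ of Proposition \ref{Prop: sig and rW}(ii) finishes the proof.

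There is no real obstacle here; the corollary is essentially a bookkeeping consequence of the already-established results. The only point requiring mild care is justifying positive definiteness on each $(\rW_0)_k$ so that the orthogonal sum is automatically a direct sum, and this is a straightforward transport of structure along $\tau_k$.
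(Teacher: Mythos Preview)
Your proposal is correct and follows essentially the same approach as the paper: establish $\rW = \sum_k (\rW_0)_k$, then use orthogonality of the distinct shifted copies together with positive definiteness of $(-,-)$ on each $(\rW_0)_k$ (transported from $\rW_0$ via $\tau_k$) to upgrade the sum to a direct sum; part (ii) is then immediate from $\tau_k$ being an isometry carrying $\Delta_0$ onto $(\Delta_0)_k$. The paper compresses your directness argument into the phrase ``non-degeneracy of $(-,-)\vert_{\rES}$,'' but the underlying reasoning is the same.
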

\Proof
We know already that
$\rW = \sum_{k \in \cor^\times/ \scor }  (\rW_0)_k$.
Since $(\rW_0)_k$ and $(\rW_0)_{k'}$ are orthogonal if $k/k'\not\in \scor$,
the non-degeneracy of $(-,-)\vert_{\rES}$ implies that
$\rW = \bigoplus_{k \in \cor^\times/ \scor }  (\rW_0)_k$.

\sn
(ii) easily follows from (i) and Theorem \ref{Thm: root system}. 
\QED

The following corollary is an immediate consequence of Theorem~\ref{Thm: root system}.
\Cor
One has 
\bnum
\item
$(\la,\la)\in2\Z_{>0}$ for any $\la\in\rW_0\setminus\st{0}$, and
\item$\Delta_0=\st{\la\in\rW_0\mid (\la,\la)=2}$.
\ee 
\encor
Hence the root system $(\rES_0, \Delta_0)$ is completely determined by the pair
$(\rW_0,\,(-,-)\vert_{\rW_0})$.

\vskip 2em

\section{Block decomposition of $\Ca_\g$} \label{Sec: Block decomposition}

In this section, we give a block decomposition of $\Ca_\g$ parameterized by $\rW$.

\subsection{Blocks} 
We recall the notion of blocks.  
Let $\cat$ be an abelian category such that any object of $\cat$ has finite length.

\begin{definition}\label{def:block}
A \emph{block} $\catB$ of $\cat$ is a full abelian subcategory such that 
\bnum
\item there is a decomposition $\cat = \catB \bigoplus \cat'$ for some full abelian subcategory $\cat'$, \label{it:block1}
\item there is no non-trivial decomposition $\catB = \catB' \bigoplus \catB''$ 
with full abelian subcategories $\catB'$ and $\catB''$. 
\ee
\end{definition}

The following lemma is obvious.
\begin{lemma}
Let $\catB$ be a full subcategory of $\shc$ satisfying condition
\ref{it:block1} in {\rm Definition~\ref{def:block}}.
\bnum
\item
$\catB$ is stable by taking subquotients and extensions,
\item
for simple objects $S,S' \in \cat$
such that $\Ext^1_\cat(S,S') \not\simeq0$,
if one of them belongs to $\catB$ then so does the other.
\ee
\end{lemma}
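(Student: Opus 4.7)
The plan is to unpack the categorical direct decomposition $\cat = \catB \oplus \cat'$ and then exploit its functoriality. First I would recall the standard fact that such a decomposition of abelian categories is equivalent to assigning to each object $X \in \cat$ a canonical splitting $X \simeq X_\catB \oplus X_{\cat'}$ with $X_\catB \in \catB$ and $X_{\cat'} \in \cat'$, together with the vanishing of cross-Homs $\Hom_\cat(Y, Z) = 0$ whenever $Y \in \catB$ and $Z \in \cat'$ (or vice versa). Granted this, every morphism splits as $f = f_\catB \oplus f_{\cat'}$, so any short exact sequence in $\cat$ is a direct sum of its $\catB$- and $\cat'$-components.

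For part (i), given a subobject $Y \hookrightarrow X$ with $X \in \catB$ (equivalently $X_{\cat'} = 0$), the $\cat'$-component of the inclusion is $Y_{\cat'} \hookrightarrow 0$, forcing $Y_{\cat'} = 0$ and hence $Y \in \catB$; the same argument applied to a quotient $X \twoheadrightarrow X/Y$ handles quotients, giving closure under subquotients. For an extension $0 \to A \to E \to B \to 0$ with $A, B \in \catB$, the $\cat'$-component reads $0 \to 0 \to E_{\cat'} \to 0 \to 0$, so $E_{\cat'} = 0$ and $E \in \catB$.

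For part (ii), assume without loss of generality that $S \in \catB$ and choose a non-split extension $0 \to S' \to E \to S \to 0$ representing a non-zero element of $\Ext^1_\cat(S, S')$. The simplicity of $S'$ means its decomposition $S' \simeq S'_\catB \oplus S'_{\cat'}$ has exactly one non-zero summand, so either $S' \in \catB$ (in which case we are done) or $S' \in \cat'$. In the latter case, functorial splitting of the sequence yields the two rows $0 \to 0 \to E_\catB \to S \to 0$ and $0 \to S' \to E_{\cat'} \to 0 \to 0$, producing $E \simeq S \oplus S'$ and contradicting non-splitness. The only mild obstacle is the categorical bookkeeping of the direct decomposition of abelian categories; once functoriality and cross-Hom vanishing are in hand, both statements reduce to immediate diagram inspections, which is presumably why the paper labels the lemma obvious.
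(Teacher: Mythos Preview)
Your proposal is correct and exactly fleshes out the routine categorical argument that the paper has in mind; the paper itself omits the proof entirely, declaring the lemma ``obvious''. Your unpacking of the direct decomposition into componentwise splittings and cross-Hom vanishing is the standard way to see both assertions, and the case analysis in part (ii) (a simple object must lie entirely in $\catB$ or entirely in $\cat'$, and the latter forces the extension to split) is the intended reasoning.
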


\begin{lemma} \label{Lem: hom ext for blocks}
Let $X, X' \in \cat$. Suppose that 
$
\Ext^{1}_\cat( S,S' )=0
$
for any simple subquotient $S$ and $S'$ of $X$ and $X'$ respectively.
Then we have 
$\Ext^{1}_\cat( X,X' )=0$. 
\end{lemma}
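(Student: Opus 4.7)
The plan is to prove this by a straightforward double induction on the lengths $\ell(X)$ and $\ell(X')$, using the long exact sequences of $\mathrm{Ext}$ associated with short exact sequences that peel off a simple subobject. The key observation is that if $0 \to Y \to X \to Z \to 0$ is exact, then every simple subquotient of $Y$ or of $Z$ is a simple subquotient of $X$; hence the vanishing hypothesis in the statement is inherited by all subobjects and quotients.

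First I would dispose of the case when one of the two objects is simple. If $X$ is simple, I induct on $\ell(X')$. The case $\ell(X')=0$ is trivial, and $\ell(X')=1$ is exactly the hypothesis. For the inductive step, pick a simple subobject $S' \hookrightarrow X'$ with short exact sequence $0 \to S' \to X' \to X'' \to 0$, apply $\Hom_\cat(X,-)$, and use the piece
\begin{equation*}
\Ext^1_\cat(X,S') \longrightarrow \Ext^1_\cat(X,X') \longrightarrow \Ext^1_\cat(X,X'')
\end{equation*}
of the long exact sequence. The left term vanishes by hypothesis (both $X$ and $S'$ are simple subquotients of the original pair), and the right term vanishes by the induction hypothesis applied to $X''$, whose simple subquotients are among those of $X'$. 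Hence $\Ext^1_\cat(X,X') = 0$.

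For the general case, I induct on $\ell(X)$. The base case $\ell(X) \le 1$ is settled above. For the inductive step, choose a simple subobject $S \hookrightarrow X$, giving $0 \to S \to X \to X''' \to 0$, and apply $\Hom_\cat(-,X')$ to obtain
\begin{equation*}
\Ext^1_\cat(X''',X') \longrightarrow \Ext^1_\cat(X,X') \longrightarrow \Ext^1_\cat(S,X').
\end{equation*}
The right-hand term vanishes by the simple-case argument (since $S$ is simple and its simple subquotient is $S$ itself), and the left-hand term vanishes by the induction hypothesis applied to $X'''$, whose simple subquotients form a subset of those of $X$. Thus $\Ext^1_\cat(X,X') = 0$.

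There is no real obstacle here: since $\cat$ has objects of finite length, the induction terminates, and the only subtle point is the purely bookkeeping fact that simple subquotients are preserved under passage to subobjects and quotients, which guarantees that the hypothesis of the lemma is preserved at every step of the induction.
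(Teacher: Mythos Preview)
Your proof is correct and follows essentially the same approach as the paper: both argue by induction on the lengths of $X$ and $X'$, peeling off a simple subobject and using the long exact sequence of $\Ext$ to reduce to shorter objects. The only cosmetic difference is that the paper inducts on the single quantity $\ell(X)+\ell(X')$ rather than organizing it as a nested double induction, but the content is identical.
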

\begin{proof}
Let $\ell$ and $\ell'$ be the lengths of $X$ and $X'$, respectively. We use an induction on $\ell+\ell'$. 
If $X$ and $X'$ are simple, then it is clear by the assumption.

Suppose that $X'$ is not simple. Then there exists an exact sequence 
$0 \rightarrow M \rightarrow X' \rightarrow N \rightarrow 0$ with a simple $M$.
Then it gives the exact sequence 
\begin{align*}
\Ext^1_\cat(X, M) \rightarrow \Ext^1_\cat(X, X') \rightarrow \Ext^1_\cat(X, N).
\end{align*}
By the induction hypothesis, we have 
$\Ext^1_\cat(X, M) = \Ext^1_\cat(X, N) = 0$, which tells us that  
 $\Ext^{1}_\cat( X,X' )=0$.

The case when  $X$ is not simple can be proved in the same manner.
\end{proof}

\begin{lemma} \label{Lem: decomposition}
Let $\mathfrak{c}$ be the set of the isomorphism classes of simple objects of $\cat$, and let $\mathfrak{c}=\bigsqcup_{a\in A}\mathfrak{c}_a$ be a partition of
$\mathfrak{c}$.
We assume that
\eqn
&&\text{\parbox{78ex}{for $a,a'\in A$ such that $a\not=a'$ and a simple object $S$ \ro resp.\ $S'$\rf
belonging to $\mathfrak{c}_a$ \ro resp.\ $\mathfrak{c}_{a'}$\rf,
one has $\Ext^1_{\cat}(S,S')=0$.}}
\eneqn
 For $a\in A$, let $\cat_a$ be the full subcategory of $\cat$ consisting of objects $X$
such that any simple subquotient of $X$ belongs to $\mathfrak{c}_a$.
Then $\cat=\soplus_{a\in A}\cat_a$.
\end{lemma}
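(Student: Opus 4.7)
The plan is to show that every object $X \in \cat$ decomposes canonically as $X = \bigoplus_{a \in A} X_a$ with $X_a \in \cat_a$ and that morphisms vanish across different $\cat_a$'s. For each $a \in A$, I would take $X_a$ to be the (unique) maximal subobject of $X$ whose simple subquotients all lie in $\mathfrak{c}_a$; equivalently, I will construct these subobjects inductively on the length of $X$. Note that $\cat_a$ is automatically closed under subquotients and, since $\mathfrak{c} = \bigsqcup_a \mathfrak{c}_a$ is a \emph{partition}, also under extensions.

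For the existence of the decomposition, I would argue by induction on the length $\ell(X)$. If $\ell(X) = 1$, then $X$ is simple and belongs to some unique $\mathfrak{c}_a$, so $X = X_a$. For the inductive step, pick a short exact sequence $0 \to S \to X \xrightarrow{\pi} X' \to 0$ with $S$ simple, say $S \in \mathfrak{c}_{a_0}$, and by induction write $X' = \bigoplus_{b \in A} X'_b$ with $X'_b \in \cat_b$. For each $b \neq a_0$, Lemma~\ref{Lem: hom ext for blocks} applied to the hypothesis of the lemma gives $\Ext^1_\cat(X'_b, S) = 0$, so the pullback extension $0 \to S \to \pi^{-1}(X'_b) \to X'_b \to 0$ splits, producing a section $s_b \col X'_b \hookrightarrow X$. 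Define $X_b \seteq s_b(X'_b)$ for $b \neq a_0$ and $X_{a_0} \seteq \pi^{-1}(X'_{a_0})$. The sum $Y \seteq \sum_{b \neq a_0} X_b$ maps isomorphically onto $\bigoplus_{b \neq a_0} X'_b$ under $\pi$, hence $Y \cap \ker \pi = 0$. A diagram chase using $\pi(X_{a_0}) = X'_{a_0}$ and $\pi(Y) = \bigoplus_{b \neq a_0} X'_b$ then yields $X = X_{a_0} \oplus Y = \bigoplus_{b \in A} X_b$, and each $X_b$ lies in $\cat_b$ by construction.

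For the Hom-vanishing across different components, suppose $M \in \cat_a$, $N \in \cat_b$ with $a \neq b$ and $f \col M \to N$. The image $\Im(f)$ is simultaneously a quotient of $M$ and a subobject of $N$, so all of its simple subquotients lie in $\mathfrak{c}_a \cap \mathfrak{c}_b = \emptyset$; hence $\Im(f) = 0$. Combined with the existence of decompositions above, this gives $\cat = \soplus_{a \in A} \cat_a$ as claimed. The main (and essentially only) obstacle is the splitting argument in the inductive step, which is precisely where the hypothesis $\Ext^1_\cat(S,S')=0$ for simples in different parts — promoted to arbitrary objects via Lemma~\ref{Lem: hom ext for blocks} — is used; everything else is a routine verification about partitions of simples in a length-finite abelian category.
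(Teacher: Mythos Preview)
Your proof is correct and follows essentially the same approach as the paper: induction on the length of $X$, peeling off one simple factor, applying the induction hypothesis to the remaining piece, and then using the $\Ext^1$-vanishing (via Lemma~\ref{Lem: hom ext for blocks}) to split the resulting extension. The only cosmetic difference is that the paper peels off a simple \emph{quotient} (taking $Y\subset X$ with $X/Y$ simple, decomposing $Y$, and splitting $0\to\bigoplus_{a\neq a_0}Y_a\to X\to Z\to0$), whereas you peel off a simple \emph{subobject} and split the pullbacks along $X\to X/S$; these are dual versions of the same argument.
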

\begin{proof}
It is enough to show that any object $X$ of
$\cat$ has a decomposition
$X\simeq \soplus_{a\in A}X_a$ with $X_a\in\cat_a$.
In order to prove this,
we shall argue by induction on the length of $X$. We may assume that X is non-zero.
 Let us take a subobject $Y$ of $X$ such that $X/Y$ is simple.
Then the induction hypothesis implies that $Y = \soplus_{a\in A}Y_a$ 
with $Y_a\in\cat_a$.

Take $a_0\in A$ such that
$X/Y$ belongs to $ \mathfrak{c}_{a_0} $. Then define $Z\in\cat$
by the following exact sequence
\begin{align} \label{Eq: split ex}
0 \rightarrow \soplus_{a\not=a_0}Y_a \rightarrow X \rightarrow Z \rightarrow 0.
\end{align}
Since we have an exact sequence
$0\to Y_{a_0}\to Z\to X/Y\to 0$,
$Z$ belongs to $\cat_{a_0}$.
Then, Lemma \ref{Lem: hom ext for blocks} says 
$\Ext^1(Z,\soplus_{a\not=a_0}Y_a )=0$.
Hence the exact sequence  $\eqref{Eq: split ex}$ splits, i.e.,
 $X \simeq Z\oplus\soplus_{a\not=a_0}Y_a$.
\end{proof}

Let $\approx$ be the equivalence relation on the set of the isomorphism classes of simple objects of $\cat$ generated by the following relation $\approx'$:
for simple objects $S,S' \in \cat$,
$$
[S] \approx' [S'] \quad \text{ if and only if } \quad \Ext^1_\cat(S,S') \ne 0.
$$

\begin{theorem} \label{Thm: blocks}
Let $A$ be the set of $\approx$-equivalence classes.
For $a\in A$, let $\cat_a$
be the full subcategory of $\cat$ consisting of objects $X$
such that any simple subquotient of $X$ belongs to $a$.
Then, $\cat_a$ is a block, and
the category $\cat$ has a decomposition $\cat = \bigoplus_{a \in A} \cat_a $.
Moreover, any block of $\cat$ is equal to $\cat_a$ for some $a$.
\end{theorem}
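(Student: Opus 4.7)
The plan is to reduce the whole statement to Lemma \ref{Lem: decomposition}. First I would apply that lemma to the partition $\mathfrak{c}=\bigsqcup_{a\in A}a$ of isomorphism classes of simple objects given by the $\approx$-equivalence. By the very definition of the generating relation $\approx'$, for simples $S,S'$ lying in distinct $\approx$-classes we have $\Ext^1_{\cat}(S,S')=0$, so the hypothesis of Lemma \ref{Lem: decomposition} is satisfied and we immediately obtain the direct sum decomposition $\cat=\soplus_{a\in A}\cat_a$.

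Next I would verify that each $\cat_a$ is a block. Condition \ref{it:block1} of Definition \ref{def:block} is witnessed by $\cat'\seteq\soplus_{b\ne a}\cat_b$. For the indecomposability condition, I would argue by contradiction: suppose $\cat_a=\catB'\oplus\catB''$ is a non-trivial decomposition and pick simples $S\in\catB'$ and $S'\in\catB''$. Both belong to the $\approx$-class $a$, so there exists a chain $S=S_0,S_1,\ldots,S_n=S'$ with $S_{k-1}\approx' S_k$ for every $k$, that is, $\Ext^1(S_{k-1},S_k)\ne 0$ or $\Ext^1(S_k,S_{k-1})\ne 0$. Since the chain begins in $\catB'$ and ends in $\catB''$, some consecutive pair lies in different summands, contradicting the standard fact that $\Ext^1$ vanishes between objects of distinct summands of an abelian direct sum decomposition.

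For the uniqueness assertion, let $\catB$ be an arbitrary block of $\cat$. Using $\cat=\soplus_{a\in A}\cat_a$, every object $X\in\catB$ has a decomposition $X=\soplus_a X_a$ with $X_a\in\cat_a$; each $X_a$ is a direct summand of $X$ and hence lies in $\catB$, because $\catB$ is itself a direct summand of $\cat$ and therefore closed under taking summands. Consequently $\catB=\soplus_a(\catB\cap\cat_a)$, and condition (ii) of blockhood forces all but one summand to vanish, so $\catB\subset\cat_a$ for a single $a\in A$. For the reverse inclusion I would fix a simple $S\in\catB\cap\cat_a$ and, for any simple $S'\in\cat_a$, take a chain of $\approx'$-related simples from $S$ to $S'$; condition \ref{it:block1} applied to $\catB$ (i.e.\ the $\Ext^1$-vanishing across $\cat=\catB\oplus\cat'$) propagates membership in $\catB$ along each step, so $S'\in\catB$, and closure of $\catB$ under extensions finally yields $\cat_a\subset\catB$.

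The main obstacle I anticipate is not conceptual but rather bookkeeping: I need to pin down precisely the standard fact that in a direct sum decomposition $\shc=\shc'\oplus\shc''$ of an abelian category the groups $\Ext^1(X',X'')$ and $\Ext^1(X'',X')$ vanish for $X'\in\shc'$ and $X''\in\shc''$. This is elementary (any extension decomposes according to the summand structure and hence splits canonically), but it underpins both the proof that $\cat_a$ is indecomposable and the propagation step in the uniqueness argument, so it should be recorded cleanly before invoking it.
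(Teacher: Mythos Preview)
Your proposal is correct and follows essentially the same approach as the paper: both invoke Lemma~\ref{Lem: decomposition} with the partition by $\approx$-classes to obtain the decomposition, and then use the definition of $\approx$ to rule out a non-trivial splitting of $\cat_a$. The paper's proof is considerably terser---it dispatches the indecomposability of $\cat_a$ in a single sentence and does not spell out the ``Moreover'' uniqueness claim at all---so your chain arguments for indecomposability and for the equality $\catB=\cat_a$ supply details the paper leaves implicit, but there is no genuine difference in strategy.
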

\begin{proof}
Lemma \ref{Lem: decomposition} implies the decomposition 
$$
\cat = \bigoplus_{a \in A} \cat_a.
$$
Moreover, since $a$ is a $\approx$-equivalence class, there is no non-trivial decomposition of $\cat_a$ for any $a \in A$. 
\end{proof}

The corollary below follows directly from Theorem \ref{Thm: blocks}.  
\begin{corollary} \label{Cor: indecomposable and block}
Let $X$ be an indecomposable object of $\cat$. Then $X$ belongs to some block. In particular, 
all the simple subquotients of $X$ belong to the same block. 
\end{corollary}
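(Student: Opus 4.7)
The plan is to simply unwind the direct sum decomposition provided by Theorem~\ref{Thm: blocks}. Since $\cat=\soplus_{a\in A}\cat_a$, any object $X\in\cat$ admits a canonical decomposition $X\simeq\soplus_{a\in A}X_a$ with $X_a\in\cat_a$, and only finitely many $X_a$ are non-zero (because $X$ has finite length, so its simple subquotients fall into only finitely many equivalence classes).

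First, I would apply this decomposition to the given indecomposable object $X$. Since $X$ is indecomposable and the category is abelian, the direct sum $X\simeq\soplus_{a\in A}X_a$ must collapse to a single summand: there exists a unique $a_0\in A$ such that $X_{a_0}\simeq X$ and $X_a=0$ for all $a\neq a_0$. Consequently $X\in\cat_{a_0}$, which is a block by Theorem~\ref{Thm: blocks}, proving the first assertion.

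For the second assertion, by the very definition of $\cat_{a_0}$, every simple subquotient $S$ of $X$ satisfies $[S]\in a_0$. Thus all simple subquotients of $X$ lie in the same $\approx$-equivalence class, hence in the same block $\cat_{a_0}$. No obstacle is anticipated here, as the corollary is a direct book-keeping consequence of the block decomposition already established; the only point to verify carefully is that the decomposition of an object into its block components is functorial and respects indecomposability, which is immediate from the direct sum structure of the abelian category.
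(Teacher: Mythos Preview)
Your proposal is correct and is precisely the unwinding of Theorem~\ref{Thm: blocks} that the paper intends; the paper itself gives no explicit proof, simply stating that the corollary follows directly from Theorem~\ref{Thm: blocks}.
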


\medskip
\subsection{Direct decomposition of $\Ca_\g$}\label{Sec: direct decomp Cg} \
In this subsection, we shall prove that $\Ca_\g$ has a decomposition parameterized by elements of
$\rW$.

\begin{lemma} \label{Lem: aff hom}
For modules $M,N \in \Ca_\g$, there exists an isomorphism
\begin{align}
 \Psi\col \cor[z^{\pm1}] \otimes \Hom_{U'_q(\g)}(N, \trivial) \otimes \Hom_{U'_q(\g)}( \trivial, M)   
 \buildrel \sim\over \longrightarrow   \Hom_{  U'_q(\g)}(N, M_z)
\label{mor:Psi}
\end{align}
defined by 
$ \Psi(a(z) \otimes f \otimes g ) = a(z) ( g \circ f ) $ for $a(z) \in \cor[z^{\pm1}]$, $f \in \Hom_{U'_q(\g)}(N, \trivial)$ and $g \in \Hom_{U'_q(\g)}(\trivial, M)$.
\end{lemma}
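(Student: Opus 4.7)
The plan is to identify $\Hom_{U'_q(\g)}(N, M_z)$ with the subspace of homomorphisms landing in $(M_z)^{U'_q(\g)}$, and then to compute the latter directly. Well-definedness and $\cor[z^{\pm 1}]$-linearity of $\Psi$ are immediate: the composition $g \circ f \colon N \to \trivial \to M$ extends to a $U'_q(\g)$-homomorphism $N \to M_z$ via the zero-degree embedding $M \hookrightarrow M_z$, and scaling by $a(z) \in \cor[z^{\pm 1}]$ is realized by the $U'_q(\g)$-automorphism $z_M$ of weight $\updelta$. Injectivity of $\Psi$ reduces, by flatness of $\cor[z^{\pm 1}]$ over $\cor$, to the injectivity of the composition map $\Hom(N, \trivial) \otimes \Hom(\trivial, M) \to \Hom(N, M)$; this follows from $\End_{U'_q(\g)}(\trivial) = \cor$ by a dimension count, since both source and image have dimension $\dim\Hom(N, \trivial) \cdot \dim\Hom(\trivial, M)$.

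For surjectivity, I first observe that writing $v = \sum_k z_M^k \otimes m_k \in M_z$, a direct computation shows $v \in (M_z)^{U'_q(\g)}$ if and only if each $m_k \in M^{U'_q(\g)}$; hence $(M_z)^{U'_q(\g)} = \cor[z^{\pm 1}] \otimes M^{U'_q(\g)}$, which as a $U'_q(\g)$-module is a direct sum of copies of $\trivial$. Granted that $\phi(N) \subset (M_z)^{U'_q(\g)}$ for every $\phi \in \Hom_{U'_q(\g)}(N, M_z)$, the identification $\Hom_{U'_q(\g)}(N, (M_z)^{U'_q(\g)}) \simeq \cor[z^{\pm 1}] \otimes \Hom(N, \trivial) \otimes \Hom(\trivial, M)$ is then canonical via $M^{U'_q(\g)} = \Hom(\trivial, M)$, and a direct check shows this identification coincides with $\Psi$.

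The main obstacle is establishing the containment $\phi(N) \subset (M_z)^{U'_q(\g)}$. By induction on length this reduces to two statements: (a) every simple $U'_q(\g)$-submodule of $M_z$ is isomorphic to $\trivial$; (b) any finite-dimensional submodule of $M_z$ whose composition factors are all $\trivial$ is contained in $(M_z)^{U'_q(\g)}$. Step (b) is easy: such a submodule $V$ is concentrated in classical weight $0$; since $V$ is stable under each $e_i, f_i$ and $e_i v$ has weight $\alpha_i \neq 0$, we must have $e_i v = 0$ and similarly $f_i v = 0$ for all $v \in V$, so $V \subset (M_z)^{U'_q(\g)}$. For step (a), given a simple $S \hookrightarrow M_z$, the specialization $M_a \seteq M_z/(z_M - a)M_z$ has composition factors that are the spectral shifts $(S_i)_a$ of the composition factors $S_i$ of $M$; if $S \not\simeq \trivial$, then by Theorem \ref{Thm: basic properties}\ref{Thm: bp5} and \eqref{eq:mi}, the set of $a \in \cor^\times$ for which $S \simeq (S_i)_a$ for some $i$ is discrete, so the composition $S \hookrightarrow M_z \twoheadrightarrow M_a$ vanishes for cofinitely many $a$, giving $S \subset (z_M - a) M_z$ for such $a$. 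A standard Laurent-polynomial evaluation argument then gives $\bigcap_{a \in \cor^\times}(z_M - a) M_z = 0$, forcing $S = 0$ --- a contradiction; hence $S \simeq \trivial$.
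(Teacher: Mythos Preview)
Your approach is correct and takes a genuinely different route from the paper's. For surjectivity the paper assumes $\phi\col N\hookrightarrow M_z$ is injective, supposes there is a nonzero classical weight $\lambda\in\wt(N)$, and derives a contradiction using Lusztig's braid group operator $T_w$ for a Weyl group element $w$ with $w\mu=\mu+n\updelta$ on $\cl^{-1}(\lambda)$: since $M_z$ carries a $U_q(\g)$-module structure, $T_w$ shifts $\wl$-weights by $n\updelta$, so no nonzero finite-dimensional subspace of $\bigoplus_{\cl(\mu)=\lambda}(M_z)_\mu$ can be $T_w$-stable, whereas $\phi(N_\lambda)$ is. Your argument avoids the braid group entirely, using instead the specializations $M_z\twoheadrightarrow M_a$ and the parametrization of simple modules by their affine highest weight (Theorem~\ref{Thm: basic properties}\,\ref{Thm: bp5}). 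This is a nice and more elementary alternative, at the cost of invoking the classification of simples.

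One point needs a bit more care. Your step~(a) only controls simple \emph{submodules} of $M_z$, but the phrase ``by induction on length'' has to yield that every simple \emph{subquotient} of a finite-dimensional $V\subset M_z$ is trivial; after quotienting by a simple $S\subset V$ you are no longer inside an affinization, so (a) does not apply directly to $V/S$. The cleanest fix is to prove this stronger statement in one stroke with the same specialization idea: if $T$ is a non-trivial simple subquotient of $V$, then the set $B=\{a\in\cor^\times: T\simeq (S_i)_a\text{ for some composition factor }S_i\text{ of }M\}$ is finite; since $\bigcap_{a\in A}\bigl(V\cap(z_M-a)M_z\bigr)=0$ for any infinite $A\subset\cor^\times$, finite-dimensionality of $V$ gives $a_1,\dots,a_k\in\cor^\times\setminus B$ with $V\hookrightarrow\bigoplus_j M_{a_j}$, whence $T$ would be a composition factor of some $M_{a_j}$ with $a_j\notin B$, a contradiction. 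With this strengthening of (a) in place, your step~(b) finishes the argument as written.
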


\begin{proof}
Note that $ \cor[z^{\pm1}] \otimes\Hom_{U'_q(\g)}( \trivial, M)\isoto\Hom_{U'_q(\g)}( \trivial, M_z) $.
There is a quotient $N'$
of $N$ which is a direct sum of copies of $\one$
and $\Hom(N',\one)\isoto\Hom(N,\one)$.
Since \eqref{mor:Psi} for $N'$ is obviously an isomorphism,
$\Psi$ is injective.

 In order to prove that $\Psi$ is surjective, we shall decompose 
a given non-zero $f\col N \to M_z$ into $N \to \one^{\oplus \ell} \to M_z$ for some
$\ell \in \Z_{>0}$.  Here $\trivial^{\oplus \ell}$ is the direct sum of $\ell$ copies of the trivial module $\trivial$.
Without loss of generality, we may assume that $f$ is injective.
We set $\wt(N) \seteq \{ \la \in \wlP_\cl \mid N_\la \ne 0  \}$. 

If $\wt(N) =\{0\}$, then $N$ should be isomorphic to $\trivial ^{\oplus \ell}$ for some $\ell \in \Z_{>0}$, which is the desired result.

We suppose that $\wt(N)\not=\{0\}$. We choose a non-zero weight 
$\la \in \wt(N)$.

Note that the $U'_q(\g)$-module structure on $M_z$ extends to a
$U_q(\g)$-module structure and we have a weight 
decomposition $M_z=\soplus_{\mu\in\wlP}(M_z)_\mu$.
Then we have
$$f(N_\la)\subset \soplus_{\mu\in\wlP,\;\cl(\mu)=\la}(M_z)_\mu,$$
where  $\cl\colon \wl\to \wl_\cl$ is the classical projection.
There exist  $w \in \W$ and a non-zero integer $n$ 
such that $w( \mu) = \mu+n\delta$ for any $\mu\in\cl^{-1}(\la)$.
We now consider the braid group action $T_w$ 
defined by $w$  on an integral module (see \cite{Lus90,S94}). Then 
the $\cor$-linear automorphism $T_w$ sends $(M_z)_\mu$ to $(M_z)_{w\mu}$.
The space $f(N_\lambda)$ is invariant under the
automorphism $T_w$, but any  non-zero finite-dimensional subspace of
$\soplus_{\mu\in\wlP,\;\cl(\mu)=\la} (M_z)_{\mu}$
cannot be invariant under $T_w$. This is a contradiction. 
\end{proof}

\begin{proposition} \label{prop:ayx}
For modules $M,N \in \Ca_\g$ and a simple module $L \in \Ca_\g$, we have the following isomorphisms
$$
 \cor[z^{\pm1}] \otimes \Hom_{U_q'(\g)}(M,N) \buildrel \sim\over \longrightarrow 
\Hom_{  \cor[z^{\pm1}] \otimes U_q'(\g)}(M\otimes L_z,N \otimes L_z).
$$
\end{proposition}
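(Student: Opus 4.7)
The map $\Psi\col a(z)\otimes\varphi\mapsto a(z)(\varphi\otimes\id_{L_z})$ is tautologically $\cor[z^{\pm 1}]$-linear, so I shall prove injectivity and surjectivity separately.

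For injectivity, specialize $z\mapsto c$ for each $c\in\cor^\times$: the induced map
$\Psi_c\col\Hom_{U_q'(\g)}(M,N)\Lto\Hom_{U_q'(\g)}(M\otimes L_c,N\otimes L_c)$ sending $\varphi\mapsto\varphi\otimes\id_{L_c}$ is injective since $L_c$ is a non-zero simple object in the rigid category $\Ca_\g$, and the standard zig-zag argument (using the unit $\trivial\to L_c\otimes L_c^*$ and counit $L_c^*\otimes L_c\to\trivial$) forces $\varphi=0$ whenever $\varphi\otimes\id_{L_c}=0$. If $\sum_i a_i(z)\otimes\varphi_i\in\ker\Psi$ with linearly independent $\varphi_i\in\Hom(M,N)$, then $\sum_i a_i(c)\varphi_i=0$ in $\Hom(M,N)$ for every $c\in\cor^\times$, forcing each $a_i\equiv 0$.

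For surjectivity, I pass to the ambient monoidal category of $\cor[z^{\pm 1}]\otimes U_q'(\g)$-modules, in which $L_z$ has a right dual $L_z^{\vee}\seteq\Hom_{\cor[z^{\pm 1}]}(L_z,\cor[z^{\pm 1}])$ with unit $\eta$ and counit $\varepsilon$. The rigidity adjunction yields
\[
\Hom_{\cor[z^{\pm 1}]\otimes U_q'(\g)}(M\otimes L_z,N\otimes L_z)\simeq\Hom_{\cor[z^{\pm 1}]\otimes U_q'(\g)}(M,\,N\otimes L_z\otimes_{\cor[z^{\pm 1}]} L_z^{\vee}).
\]
Combined with the counit short exact sequence $0\to K\to L_z\otimes_{\cor[z^{\pm 1}]} L_z^{\vee}\xrightarrow{\varepsilon}\cor[z^{\pm 1}]\to 0$ and the identification $\Hom_{\cor[z^{\pm 1}]\otimes U_q'(\g)}(M,N\otimes_\cor\cor[z^{\pm 1}])\simeq\cor[z^{\pm 1}]\otimes\Hom_{U_q'(\g)}(M,N)$ (a consequence of Lemma~\ref{Lem: aff hom} applied to the trivial $\cor[z^{\pm 1}]$-action on $M$), the image of $\Psi$ is seen to surject onto the summand coming from $\varepsilon$, so surjectivity of $\Psi$ is reduced to the vanishing
\[
\Hom_{\cor[z^{\pm 1}]\otimes U_q'(\g)}(M,\,N\otimes K)=0.
\]

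The last vanishing is the main obstacle. My approach is to first base-change to the fraction field $\cor(z)$: the module $L_z\otimes_{\cor[z^{\pm 1}]}\cor(z)$ is simple over $\cor(z)\otimes U_q'(\g)$, and Schur's lemma together with the splitting of $\varepsilon$ over $\cor(z)$ (since $L_z\otimes L_z^{\vee}$ becomes semisimple with the trivial $\cor(z)$-summand isolated) imply that $K\otimes_{\cor[z^{\pm 1}]}\cor(z)$ contains no trivial composition factor. A weight-space analysis mirroring that of Lemma~\ref{Lem: aff hom}, combined with the finite generation of the relevant Hom space as a $\cor[z^{\pm 1}]$-module, then propagates the generic vanishing to an integral vanishing and completes the proof.
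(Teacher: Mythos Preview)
Your approach is substantially more complicated than the paper's and the surjectivity argument has real gaps. The paper's proof is three lines: by rigidity in the $\cor[z^{\pm1}]$-linear monoidal category,
\[
\Hom_{\cor[z^{\pm1}]\otimes U_q'(\g)}(M\otimes L_z,\,N\otimes L_z)\;\simeq\;\Hom_{U_q'(\g)}\bl N^*\otimes M,\,(L\otimes L^*)_z\br,
\]
and then Lemma~\ref{Lem: aff hom} applied with $N^*\otimes M$ and $L\otimes L^*$ gives immediately
\[
\Hom_{U_q'(\g)}\bl N^*\otimes M,(L\otimes L^*)_z\br\;\simeq\;\cor[z^{\pm1}]\otimes\Hom_{U_q'(\g)}(N^*\otimes M,\one)\otimes\Hom_{U_q'(\g)}(\one,L\otimes L^*).
\]
Since $L$ is simple, $\Hom(\one,L\otimes L^*)\simeq\Hom(L,L)\simeq\cor$, and rigidity again gives $\Hom(N^*\otimes M,\one)\simeq\Hom(M,N)$. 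That is the entire proof.

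Your surjectivity argument, by contrast, reduces to showing $\Hom_{\cor[z^{\pm1}]\otimes U_q'(\g)}(M,N\otimes K)=0$, and your handling of this has two genuine problems. First, the claim that $(L\otimes L^*)_{\cor(z)}$ is semisimple over $\cor(z)\otimes U_q'(\g)$ is unjustified and in general false; the category of finite-dimensional $\cor(z)\otimes U_q'(\g)$-modules is not semisimple. Second, even if $K_{\cor(z)}$ had no trivial composition factor, you need $\Hom_{U_q'(\g)}(N^*\otimes M,K)=0$ for arbitrary $N^*\otimes M\in\Ca_\g$, not just for $\one$; your sketch ``a weight-space analysis mirroring that of Lemma~\ref{Lem: aff hom}'' is effectively an appeal to reproving that lemma at the point where you could instead apply it directly. (There is also a minor confusion of conventions: with $L_z^\vee$ a \emph{right} dual the counit goes $L_z^\vee\otimes L_z\to\cor[z^{\pm1}]$, and the map $\Psi$ corresponds under adjunction to post-composition with the \emph{unit} $\eta\col\cor[z^{\pm1}]\to L_z\otimes L_z^\vee$, so the relevant obstruction lives in the cokernel of $\eta$, not the kernel of $\varepsilon$.) The fix is simply to apply Lemma~\ref{Lem: aff hom} at the outset rather than rediscovering it inside the argument.
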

\begin{proof}
By Lemma \ref{Lem: aff hom}, we obtain that 
\begin{align*}
 \Hom_{ \cor[z^{\pm1}] \otimes\uqpg  }(M\otimes L_z,N \otimes L_z) & \simeq  \Hom_{\uqpg  }(N^* \otimes M , (L \otimes L^*)_z) \\
 & \simeq \ko[z^{\pm1}]  \otimes    \Hom_{\uqpg}(N^* \otimes M ,\one)  \tens
 \Hom_{\uqpg}(\one, L\otimes L^*)\\
&  \simeq \ko[z^{\pm1}]  \otimes \Hom_{\uqpg}(M ,N).
\end{align*}
\end{proof}

\begin{lemma} \label{Lem: Hom Ext}
Let $M$ and $N$ be simple modules in $\Ca_\g$. If 
$$
\frac{c_{M,L}(z)}{c_{N,L}(z)} \notin\cor(z) \quad\text{ for some simple module }  L \in \Ca_\g,
$$
then we have 
$$
\Ext^1_{U_q'(\g)}(M,N)=0.
$$
\end{lemma}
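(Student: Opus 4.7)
The plan is to proceed by contradiction. Suppose $\Ext^{1}_{U_q'(\g)}(M,N)\neq 0$ and fix a non-split short exact sequence
$$0 \to N \xrightarrow{\iota} X \xrightarrow{\pi} M \to 0$$
in $\Ca_\g$. Tensoring on the right with $L_z$ produces an exact sequence
$$0 \to N \otimes L_z \to X \otimes L_z \to M \otimes L_z \to 0$$
of $\cor[z^{\pm 1}] \otimes U_q'(\g)$-modules.

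The strategy is to exploit the universal R-matrix $\Runiv_{X,L_z}\colon X \otimes L_z \to L_z \otimes X$, which converges in the $z$-adic topology to a $\cor((z)) \otimes U_q'(\g)$-linear isomorphism. Its restriction to $N \otimes L_z$ equals $\Runiv_{N,L_z}$, and it induces $\Runiv_{M,L_z}$ on the quotient $M \otimes L_z$. The heart of the argument is to exhibit some $f(z) \in \cor((z))^\times$ for which $f(z)\Runiv_{X,L_z}$ descends to a $\cor[z^{\pm 1}] \otimes U_q'(\g)$-linear morphism $X \otimes L_z \to L_z \otimes X$.

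Once such $f(z)$ is in hand, the restriction to $N \otimes L_z$ equals $(f(z)/c_{N,L}(z))\Rren_{N,L_z}$, and since $\Rren_{N,L_z}$ is the canonical regular form with non-vanishing specializations, this forces $f(z)/c_{N,L}(z) \in \cor[z^{\pm 1}]^\times$. Similarly, passing to the quotient $M \otimes L_z$ and using that $\Rren_{M,L_z}$ is regular with non-vanishing specializations forces $f(z)/c_{M,L}(z) \in \cor[z^{\pm 1}]^\times$. Combining these two conclusions yields $c_{M,L}(z)/c_{N,L}(z) \in \cor(z)^\times$, contradicting the hypothesis and completing the proof.

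The main obstacle is establishing the existence of such an $f(z)$ from the non-splitness of the extension. The off-diagonal component of $\Runiv_{X,L_z}$ encodes the extension class and couples the $N$- and $M$-parts of $X$, and it must be shown that this coupling admits a rational renormalization simultaneously with the diagonal entries $\Runiv_{N,L_z}$ and $\Runiv_{M,L_z}$. A natural approach is to translate the problem into a vanishing statement for an obstruction class in $\Ext^{1}_{\cor[z^{\pm 1}] \otimes U_q'(\g)}(M \otimes L_z,\, L_z \otimes N)$, which by an Ext-analog of Proposition~\ref{prop:ayx} is isomorphic to $\cor[z^{\pm 1}] \otimes \Ext^{1}_{U_q'(\g)}(M, L\otimes N\otimes L^*)$ (or a similar expression after using the R-matrix isomorphism $L_z \otimes N \cong N \otimes L_z$ over $\cor[z^{\pm 1}]$). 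Comparing this obstruction with the class produced by $c_{N,L}(z)\Runiv_{X,L_z}$ over $\cor((z))$ via the difference morphism $M \otimes L_z \to L_z \otimes X$ whose projection to $L_z \otimes M$ equals $(1 - c_{N,L}(z)/c_{M,L}(z))\Rren_{M,L_z}$, one extracts the required rationality statement.
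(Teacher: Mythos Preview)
Your proposal has a genuine gap at the step you yourself flag as ``the main obstacle'': the existence of a single $f(z)\in\cor((z))^\times$ such that $f(z)\,\Runiv_{X,L_z}$ lands in $L_z\otimes X$ over $\cor[z^{\pm1}]$. This is not a technicality to be patched --- under the hypothesis $c_{M,L}(z)/c_{N,L}(z)\notin\cor(z)$ it is in fact the statement you are trying to contradict. Already for the \emph{split} extension $X=M\oplus N$ one would need $f(z)/c_{M,L}(z)$ and $f(z)/c_{N,L}(z)$ both in $\cor[z^{\pm1}]$, which forces their ratio into $\cor(z)$; so rational renormalizability of $\Runiv_{X,L_z}$ cannot be a consequence of the mere existence of $X$, split or not. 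Your obstruction-class sketch does not change this: there is no mechanism by which non-splitness would produce the required common renormalizer, and the Ext computation you gesture at does not supply one.

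The paper's proof does \emph{not} attempt to renormalize $\Runiv_{X,L_z}$. Instead it multiplies by $c_{M,L}(z)$ and accepts that the image of $X\otimes L_z$ only lies in $L_z\otimes X+\widehat{L}_z\otimes N$, with the $N$-part controlled by $f(z)=c_{M,L}(z)/c_{N,L}(z)$. The non-rationality of $f(z)$ is then used, not avoided: it makes the quotient $\mathcal{P}=\cor((z))/(\cor(z)+f(z)\cor(z))$ nontrivial, and one shows the induced map $M\otimes L_z\to\mathcal{P}\otimes_{\cor[z^{\pm1}]}L_z\otimes N$ vanishes (else $M\simeq N$ via Proposition~\ref{prop:ayx}). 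This forces the image of $R$ over $\cor(z)$ into $\cor(z)\otimes L_z\otimes X+f(z)\cor(z)\otimes L_z\otimes N$, and a length-two/simplicity argument over $\cor(z)$ then manufactures a section $M\to X$. The key idea you are missing is precisely this: work modulo the $\cor(z)$-span and exploit that $f(z)\notin\cor(z)$, rather than trying to eliminate the irrational overflow.
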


\begin{proof}
Let $L \in \Ca_\g$ be a simple module such that $ \frac{c_{M,L}(z)}{c_{N,L}(z)} \notin\cor(z) $.

We shall prove that any exact sequence
$$
0 \rightarrow N \rightarrow X \rightarrow M \rightarrow 0
$$
splits.
We set $\widehat{L}_z \seteq \cor((z)) \otimes _{\cor [z^{\pm1}] } L_z$, where $L_z$ is 
the affinization of $L$. 
Then the following diagram commutes:
$$
\xymatrix{
0   \ar[r]& N \otimes \widehat{L}_z  \ar[d]^\bwr_{\Runiv_{N,  \widehat{L}_z}} \ar[r] & X \otimes \widehat{L}_z \ar[d]_{\Runiv_{X,  \widehat{L}_z}}^\bwr \ar[r] & M \otimes \widehat{L}_z \ar[d]_{\Runiv_{M,  \widehat{L}_z}}^\bwr \ar[r] & 0 \\
0  \ar[r]&  \widehat{L}_z  \otimes N  \ar[r] &  \widehat{L}_z \otimes X  \ar[r] &  \widehat{L}_z  \otimes M  \ar[r] & 0 .
}
$$
We set 
\begin{align*}
f(z) \seteq \frac{ c_{M, L}(z) }{ c_{N, L}(z) } \notin \cor(z), \qquad 
R \seteq c_{M, L}(z) \Runiv_{X, \widehat{L}_z}\col X\otimes \widehat{L}_z \rightarrow   \widehat{L}_z \otimes X.
\end{align*}
It follows from 
\begin{align*}
c_{M,L}(z) \Runiv_{M,  \widehat{L}_z}(  M \otimes L_z)  \subset L_z \otimes M, \quad c_{N,L}(z) \Runiv_{N,  \widehat{L}_z}(  N \otimes L_z)  \subset L_z \otimes N
\end{align*}
that 
\begin{align*}
R( X \otimes L_z) \subset L_z \otimes X + \widehat{L}_z \otimes N, \qquad 
R(N \otimes L_z ) \subset f(z) ( L_z \otimes N ).
\end{align*}
Therefore $R$ induces the $\cor[z^{\pm1}]\tens \Up$-linear homomorphism
\eqn
\mathcal{R} \col M \otimes L_z\simeq \dfrac{X\tens L_z}{N\tens L_z}
 &&\longrightarrow \dfrac{ \cor(z) \otimes L_z \otimes X + \widehat{L}_z \otimes N}{ \cor(z) \otimes L_z \otimes X + f(z) \cor(z) \otimes L_z \otimes N }\;.
\eneqn

We set $ \mathcal{P} \seteq \frac{\cor((z))}{\cor(z) + f(z) \cor(z)}$.
Since 
\eqn
\frac{ \cor(z) \otimes L_z \otimes X + \widehat{L}_z \otimes N}{ \cor(z) \otimes L_z \otimes X + f(z) \cor(z) \otimes L_z \otimes N }
&&\simeq
\dfrac{\widehat{L}_z \otimes N}
{ \cor(z) \otimes L_z \otimes N + f(z) \cor(z) \otimes L_z \otimes N }\\
&& \simeq \mathcal{P} \otimes_{\cor[z^{\pm1}]} L_z \otimes N, 
\eneqn
we have the homomorphism  of $\cor[z^{\pm1}] \tens\Up$-modules 
$$
\mathcal{R} \col M \otimes L_z \longrightarrow \mathcal{P} \otimes_{\cor[z^{\pm1}]} L_z \otimes N.
$$

Let us show that $\mathcal{R}$ vanishes.

Assume that $\mathcal{R}\not=0$.
Then,
$\Hom_{\corz\tens\Up}(M\tens L_z,\mathcal{P} \otimes_{\cor[z^{\pm1}]} L_z \otimes N)
\simeq 
\mathcal{P} \otimes_{\cor[z^{\pm1}]} \Hom_{\corz\tens\Up}(M\tens L_z,L_z \otimes N)$
implies that
$\Hom_{\corz\tens\Up}(M\tens L_z,L_z \otimes N)\not\simeq0$.

Since $\kz\tens_{\corz}(M\tens L_z)$ and $\kz\tens_{\corz}(L_z \otimes N)$ 
are simple $\kz\tens\Up$-module,
they are isomorphic.
Since
$\kz\tens_{\corz}(L_z \otimes N)$ and $\kz\tens_{\corz}(N\tens L_z)$ are isomorphic,
we conclude that
$\kz\tens_{\corz}(M\tens L_z)\simeq\kz\tens_{\corz}(N\tens L_z)$.
On the other hand, Proposition~\ref{prop:ayx} implies that 
$$ \kz \otimes \Hom_{U_q'(\g)}(M,N) \isoto
\Hom_{\kz\otimes U_q'(\g)}(\kz \otimes_{\cor[z^{\pm1}]}M\otimes L_z, \kz \otimes_{\cor[z^{\pm1}]} N \otimes L_z).$$
Hence $\Hom_{U_q'(\g)}(M,N) \not=0$, and
we obtain that $M$ and $N$ are isomorphic, which is a contradiction.
Hence $\mathcal{R}=0$, which means that
\eqn
&&R\bl\kz\tens (X\tens L_z)\br\subset
  \cor(z) \otimes L_z \otimes X + f(z) \cor(z) \otimes L_z \otimes N.
\eneqn

Let us consider the composition
$$\Phi\col K\seteq R\bl\kz\tens (X\tens L_z)\br\cap\bl\cor(z) \otimes L_z \otimes X \br
\monoto \cor(z) \otimes L_z \otimes X\epito \cor(z) \otimes L_z \otimes M.$$
We have
$$R\bl\kz\tens (X\tens L_z)\br\cap \widehat{L}_z \tens N
=R\bl \kz\tens (N\tens L_z)\br=f(z)\kz\tens L_z\tens N.$$
Hence $\ker(\Phi)=K\cap \bl\cor(z) \otimes L_z \otimes N\br
=\bl f(z)\cor(z) \otimes L_z \otimes N)\cap \bl\cor(z) \otimes L_z \otimes N\br$
vanishes,  which means that $\Phi$ is a monomorphism. 

Since $\cor(z) \otimes L_z\tens M$ and
$\cor(z) \otimes L_z\tens N$ are simple $\kz\tens\Up$-modules, 
$\cor(z) \otimes L_z \otimes X$ has length $2$.
Similarly $R\bl\kz\tens (X\tens L_z)\br$ has also length $2$.
On the other hand,
$\cor(z) \otimes L_z \otimes X + f(z) \cor(z) \otimes L_z \otimes N$ has length $\le3$, which implies that $K$ does not vanish.
Hence $\Phi$ is an isomorphism.
Thus we conclude that
the homomorphism
\eqn
\Hom(\kz\tens L_z\tens M,\kz\tens L_z\tens X)\to&&
\Hom(\kz\tens L_z\tens M,\kz\tens L_z\tens M)\\*
&&=\kz\id_{\kz\tens L_z\tens M}\eneqn
is surjective.
Then, Proposition~\ref{prop:ayx} implies that this homomorphism
is isomorphic to
$$\kz\tens \Hom(M,X)\epito \kz\tens \Hom(M,M).$$ 
Thus we conclude that
$\Hom(M,X)\to \Hom(M,M)$ is surjective,
that is,
$$0\To N\To X\To M\To0$$
splits.
\end{proof}

For $\al \in \rW$, let $\Ca_{\g, \al}$ be the full subcategory of $\Ca_\g$ consisting of objects $X$ such that
$\rE(S)=\al$ for any simple subquotient $S$ of $X$.

\begin{theorem} \label{Thm: decomposition for Ca}
There exist the following decompositions
\begin{align*}
\Ca_\g = \bigoplus_{\al \in \rW} \Ca_{\g, \al} \quad \text{and} \quad \Ca_\g^0 = \bigoplus_{\al \in \rW_0} \Ca_{\g, \al}.
\end{align*}
\end{theorem}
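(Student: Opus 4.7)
The plan is to apply Lemma~\ref{Lem: decomposition} with the partition of the isomorphism classes of simple modules indexed by the value of $\rE$; the whole theorem reduces to verifying that $\Ext^{1}_{U_q'(\g)}(M,N) = 0$ whenever $M$ and $N$ are simple modules in $\Ca_\g$ with $\rE(M) \ne \rE(N)$.

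For this key $\Ext^1$-vanishing, suppose $\rE(M) \ne \rE(N)$. By the contrapositive of Lemma~\ref{Lem: prop for E}\,\ref{Lem: prop for E: item3}, there is some $i \in I_0$ with $a_{M, V(\varpi_i)}(z) / a_{N, V(\varpi_i)}(z) \notin \cor(z)^\times$. The renormalizing coefficient factors as $c_{X, L}(z) = d_{X, L}(z)/a_{X, L}(z)$ up to $\cz^\times$, and the denominators $d_{M, V(\varpi_i)}(z)$ and $d_{N, V(\varpi_i)}(z)$ are polynomials; hence $c_{M, V(\varpi_i)}(z) / c_{N, V(\varpi_i)}(z) \notin \cor(z)$. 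Taking $L = V(\varpi_i)$ in Lemma~\ref{Lem: Hom Ext} then yields $\Ext^{1}_{U_q'(\g)}(M, N) = 0$.

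Applying Lemma~\ref{Lem: decomposition} to $\Ca_\g$ with the partition $\mathfrak{c}_\al = \{ [S] \mid S \text{ simple}, \ \rE(S) = \al \}$ for $\al \in \rW$ immediately produces the first decomposition $\Ca_\g = \bigoplus_{\al \in \rW} \Ca_{\g, \al}$. The same argument applied inside $\Ca_\g^0$, whose simples take values in $\rW_0$ under $\rE$ by the definition of $\rW_0$ and for which the relevant Ext computations agree with those in $\Ca_\g$, yields the second decomposition indexed by $\rW_0$.

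The main conceptual obstacle is to identify each subcategory appearing in the decomposition of $\Ca_\g^0$ with the same $\Ca_{\g,\al}$ that appears in the decomposition of $\Ca_\g$, i.e., to prove that $\Ca_{\g,\al} \subset \Ca_\g^0$ for every $\al \in \rW_0$. This reduces to showing that any simple $M \in \Ca_\g$ with $\rE(M) \in \rW_0$ already lies in $\Ca_\g^0$. The strategy is to combine Theorem~\ref{Thm: basic properties}\,\ref{Thm: bp5} with the orthogonal decomposition $\rW = \bigoplus_{c \in \cor^\times / \scor} (\rW_0)_c$ from Corollary~\ref{Cor: root system for W}: writing the unique affine highest weight of $M$ as $\sum_k (i_k, a_k)$ and projecting $\rE(M) = \sum_k \rs_{i_k, a_k}$ onto each orthogonal summand $(\rW_0)_c$ with $c \ne 1$, one must leverage the simply-laced root system structure of $(\Delta_0)_c$ established in Theorem~\ref{Thm: root system} to rule out nontrivial cancellations of roots and force every $(i_k, a_k)$ to lie in $\sigZ$.
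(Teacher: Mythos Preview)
Your first three paragraphs match the paper's proof: apply Lemma~\ref{Lem: prop for E}\,\ref{Lem: prop for E: item3} to get $a_{M,V(\varpi_i)}(z)/a_{N,V(\varpi_i)}(z)\notin\cor(z)$ for some $i$, feed this into Lemma~\ref{Lem: Hom Ext} with $L=V(\varpi_i)$, and invoke Lemma~\ref{Lem: decomposition}. Your explicit passage from the $a$-ratio to the $c$-ratio via the polynomiality of the denominators is a detail the paper leaves implicit but is correct.

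The fourth paragraph, however, is chasing a false statement. The inclusion $\Ca_{\g,\al}\subset\Ca_\g^0$ for $\al\in\rW_0$ fails already at $\al=0$: pick any $(i,a)\notin\sigZ$ and let $S$ be any non-trivial simple subquotient of $V(\varpi_i)_a\otimes\rd\bl V(\varpi_i)_a\br$. Then $\rE(S)=\rs_{i,a}+\rs_{i^*,ap^*}=0$ by Lemma~\ref{Lem: tau_t}\,(i), so $S\in\Ca_{\g,0}$; but the affine highest weight of $S$ is a nonempty sub-multiset of $\{(i,a),(i^*,ap^*)\}$ and hence supported outside $\sigZ$, so $S\notin\Ca_\g^0$. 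Your proposed root-system argument cannot rule out such cancellations because they genuinely occur.

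The paper simply does not read the second equality so literally. Applying Lemma~\ref{Lem: decomposition} directly to the abelian category $\Ca_\g^0$ (the same Ext vanishing works there) yields $\Ca_\g^0=\bigoplus_{\al\in\rW_0}\bl\Ca_{\g,\al}\cap\Ca_\g^0\br$, and the summands are written $\Ca_{\g,\al}$ by a mild abuse of notation. Equivalently, restrict the first decomposition to $\Ca_\g^0$, using that $\Ca_\g^0$ is closed under direct summands and that every simple of $\Ca_\g^0$ has $\rE$-value in $\rW_0$. Drop the fourth paragraph and your argument is complete.
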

\begin{proof}

Let $\alpha, \beta \in \rW$ with $\alpha \ne \beta$. For simple modules $M \in \Ca_{\g, \al}$ and $N \in \Ca_{\g, \beta}$, Lemma \ref{Lem: prop for E} \ref{Lem: prop for E: item3} says that  
$ \displaystyle \frac{a_{M,V(\varpi_i)}(z) }{a_{N,V(\varpi_i)}(z) } \notin \cor(z)  $ for some $i\in I_0$. 
Hence Lemma \ref{Lem: Hom Ext} implies that
$
\Ext^1_{U_q'(\g)}(M,N)=0.
$
The desired result then follows from Lemma~\ref{Lem: decomposition}.
\end{proof}

\subsection{The block $\Ca_{\g, \al}$}\

 Recall the automorphism  $\tau_t$ on $\sig$ defined in $\eqref{Eq: tau}$.
 For $(i,a) \in \sig$, we write 
 $$
 V(i,a) \seteq V(\varpi_i)_a.
 $$ 
 Note that $ V(\tau_t \al) = V(\al)_t$ for $\al \in \sig$ and $t\in \cor^\times$.
 For $\al \in \sig$, we define $\al^* \in \sig$ by 
 $$
 V(\al^*) \simeq V(\al)^*.
 $$
 Thus we have 
 $$
 \al^{**} = \tau_{\tilde{p}^{-1}} (\alpha) \qquad \text{ for } \al \in \sig.
 $$

\begin{lemma} \label{Lem: block for tensor}
Let $\al_1, \ldots, \al_k \in \sig$ for $k\in \Z_{>0}$. Then all the simple subquotients of $V(\al_1) \otimes V(\al_2) \otimes \cdots \otimes V(\al_k)  $ are 
contained in the same block of $\Ca_\g$.
\end{lemma}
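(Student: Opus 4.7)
The plan is short and draws on two ingredients already in place: the additivity of the invariant $\rE$ on simple subquotients of tensor products (Lemma \ref{Lem: prop for E}(ii)), and the $\rW$-graded decomposition $\Ca_\g = \bigoplus_{\alpha \in \rW} \Ca_{\g,\alpha}$ of Theorem \ref{Thm: decomposition for Ca}. The lemma is meant to serve as the preparatory input for Theorem \ref{Thm: block for Ca_al}, which will identify each $\Ca_{\g,\alpha}$ with a genuine block.

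First, I apply Lemma \ref{Lem: prop for E}(ii) to $T := V(\alpha_1) \otimes \cdots \otimes V(\alpha_k)$: for every simple subquotient $S$ of $T$,
\[
 \rE(S) \;=\; \sum_{i=1}^{k} \rE(V(\alpha_i)) \;=\; \sum_{i=1}^{k} \rs_{\alpha_i} \;=:\; \alpha \in \rW.
\]
Crucially, this common value $\alpha$ does not depend on the choice of simple subquotient $S$. Then by Theorem \ref{Thm: decomposition for Ca}, every simple subquotient of $T$ belongs to the single direct summand $\Ca_{\g,\alpha}$, and in particular $T$ itself is an object of $\Ca_{\g,\alpha}$.

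Interpreted against the grading of Theorem \ref{Thm: decomposition for Ca}, this already places all simple subquotients of $T$ in the same block-like component $\Ca_{\g,\alpha}$; the promotion to \emph{the same block} in the strict sense of Definition \ref{def:block} will be completed once Theorem \ref{Thm: block for Ca_al} shows that $\Ca_{\g,\alpha}$ is itself indecomposable as an abelian category. Thus the essential step in proving Lemma \ref{Lem: block for tensor} is precisely the one-line calculation of $\rE(S)$ above, and the main (in fact, only) obstacle — showing that each summand $\Ca_{\g,\alpha}$ is indeed a single block — is deferred to the immediately following Theorem \ref{Thm: block for Ca_al}, for which the present lemma provides the required input $T \in \Ca_{\g,\alpha}$.
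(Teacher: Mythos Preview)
Your argument is circular. You establish that every simple subquotient $S$ of $T$ lies in $\Ca_{\g,\alpha}$, and then defer the upgrade to ``same block'' to Theorem~\ref{Thm: block for Ca_al}. But look at the proof of Theorem~\ref{Thm: block for Ca_al}: given two simple modules $S,S'\in\Ca_{\g,\alpha}$, it manufactures a single tensor product of fundamentals $\overline{V}(\la+\mu)$ containing both as subquotients, and then invokes Lemma~\ref{Lem: block for tensor} to conclude that $S$ and $S'$ are in the same block. That final step needs the \emph{full} statement of the lemma --- that all simple subquotients of such a tensor lie in one block --- not merely that the tensor lies in one summand $\Ca_{\g,\alpha}$. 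If you substitute your weak version there, the proof of Theorem~\ref{Thm: block for Ca_al} degenerates: you learn only that $S,S'$ lie in the same $\Ca_{\g,\alpha}$, which was the hypothesis to begin with.

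The paper's proof avoids this by arguing directly, without reference to the grading. By Theorem~\ref{Thm: basic properties}, one can permute the factors so that $V(\alpha_{\sigma(1)})\otimes\cdots\otimes V(\alpha_{\sigma(k)})$ has a simple head; such a module is indecomposable, and Corollary~\ref{Cor: indecomposable and block} then places all its simple subquotients in a single block. Since permuting the factors does not change the composition factors (the Grothendieck ring is commutative), the original tensor product has the same property. This is a self-contained argument, and it is what Theorem~\ref{Thm: block for Ca_al} can legitimately cite.
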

\begin{proof}
There exists a permutation $\sigma \in \sym_k$ such that 
the tensor product 
$V(\al_{\sigma(1)}) \otimes V(\al_{\sigma(2)}) \otimes \cdots \otimes V(\al_{\sigma(k)})  $ has a simple head   by Theorem \ref{Thm: basic properties}, 
and hence it is indecomposable. Thus, all the simple subquotients of $V(\al_{\sigma(1)}) \otimes V(\al_{\sigma(2)}) \otimes \cdots \otimes V(\al_{\sigma(k)})  $  
are contained in the same block by Corollary \ref{Cor: indecomposable and block}.
Since any simple subquotient of $V(\al_1) \otimes V(\al_2) \otimes \cdots \otimes V(\al_k)  $ is isomorphic to some
simple subquotient of $V(\al_{\sigma(1)}) \otimes V(\al_{\sigma(2)}) \otimes \cdots \otimes V(\al_{\sigma(k)}) $,
we obtain the desired result.
\end{proof}

We set 
$$
\lP \seteq \bigoplus_{\al \in \sig} \Z \lse_\al \quad \text{ and } \quad \lP_0 \seteq \bigoplus_{\al \in \sigZ} \Z \lse_\al
$$
and
$$
\lP^+ \seteq \sum_{\al \in \sig} \Z_{\ge0} \lse_\al\subset\lP,
$$ 
where $\lse_\al$ is a symbol. Define a group homomorphism 
$$
 p\col \lP \twoheadrightarrow \rW, \quad  \lse_{(i,a)} \mapsto \rs_{i,a}, 
$$
and set 
$$
 p_0 \seteq p|_{\lP_0}\col \lP_0 \twoheadrightarrow \rW_0. 
 $$

By Proposition \ref{Prop: sig and rW}, we have 
\begin{align} \label{Eq: lP and lQ1}
\lP = \bigoplus_{k \in \cor^\times / \scor}  (\lP_0)_k.
\end{align}

Let $\lQ_0$ be the subgroup of 
$\lP_0$ generated by the elements of the form $ \sum_{k=1}^m  \lse_{\al_k}$ 
($\al_k \in \sigZ$)
such that the trivial module $\trivial$ appears in $V(\al_1) \otimes V(\al_2) \otimes \cdots \otimes V(\al_{ m })  $
as a simple subquotient.  We then have $p_0 ( \lQ_0 ) = 0 $.

We set  

\begin{align} \label{Eq: lP and lQ2}
\lQ \seteq \bigoplus_{k \in \cor^\times / \scor}  (\lQ_0)_k.
\end{align}

Recall $\phi_Q\col \Df \isoto\sigQ$ in \eqref{Eq: phi}.
Let $\Pf\subset\Df$ be 
the set of simple roots of the positive root system $\Df$
and $\rlq$ the corresponding root lattice. 
Hence we have $\Pf\subset\Df\subset\rlq$.

In the proof of the following lemma, 
we do not use  Theorem~\ref{Thm: root system}.

\Lemma\label{lem:Pi}
For $\al\in\sigZ$, let us denote 
by $\bse_\al\in \lP_0/\lQ_0$ the image of
$\lse_\al$ by the projection $\lP_0\to\lP_0/\lQ_0$.
\bnum
\item
The map $\Df\ni\al\mapsto  \bse_{\phi_Q(\al)}\in\lP_0/\lQ_0$ extends to an additive map
$\psi'_Q\col\rlq\to\lP_0/\lQ_0$.
\item $\psi'_Q$ is surjective, i.e., we have
$$\lP_0/\lQ_0=\sum_{\beta\in\Pf}\Z\bse_{\phi_Q(\beta)}.$$
\item Let $\psi_Q\col\rlq\to\rW_0$ be the composition
$\rlq\To[{\psi'_Q}]\lP_0/\lQ_0\To\rW_0$.
Then we have
$$\psi_Q(\beta)=\rE\bl V_Q(\beta)\br.$$
\item $\psi_Q$ is surjective, i.e., we have 
$\rW_0=\sum_{\al\in\phi_Q(\Pf)}\Z   p_0( \lse_\al ) $.
\ee
\enlemma
\Proof

(i)\ 
The map $\Pf\ni \al\mapsto \bse_{\phi_Q(\al)}\in \lP_0/\lQ_0$
extends to a linear map $\psi'_Q\col\rlq\to\lP_0/\lQ_0$.
It is enough to show that
$\bse_{\phi_Q(\gamma)}=\psi'_Q(\gamma)$ for any $\gamma\in\Df$.
Let us show it by  induction on the length of $\gamma$.
If $\gamma$ is not a simple root,
take a minimal pair $(\beta,\beta')$ of $\gamma$
(see Proposition \ref{Prop: CQ categorification}).
Since $ V_Q(\gamma) $ appears as a composition factor of $ V_Q(\be) \otimes  V_Q(\be') $ by Proposition \ref{Prop: CQ categorification}, we have

$$\bse_{\phi_Q(\gamma)}=\bse_{\phi_Q(\be)}+\bse_{\phi_Q(\be')}=\psi'_Q(\beta)+\psi'_Q(\beta')
=\psi'_Q(\gamma).$$

\sn
(ii) follows from (i), and (iii) follows from
(ii) and a surjective map
$\lP_0/\lQ_0\epito\rW_0$.
\QED

In the proof of the following lemma, we use 
the fact that the rank of $\rW_0$ is at least the rank of $\Df$
(stated in Theorem~\ref{Thm: root system} whose proof is postponed 
 to  \S\,\ref{Sec: proofs} \eqref{eq:atleast}). 

\begin{lemma} \label{Lem: Q ker p} 
We have isomorphisms
\eqn&&
\lP_0/\lQ_0\isoto\rW_0\qtq \lP/\lQ\isoto\rW.
\eneqn
\end{lemma}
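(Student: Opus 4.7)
The plan is to reduce the first isomorphism $\lP_0/\lQ_0 \isoto \rW_0$ to a rank comparison via the $Q$-datum combinatorics already established in Lemma~\ref{lem:Pi}, and then to deduce the second isomorphism by decomposing along the $\scor$-orbits on $\sig$.

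First I would verify that $\lQ_0 \subset \ker p_0$, so that $p_0$ descends to a surjection $\bar p_0 \col \lP_0/\lQ_0 \epito \rW_0$. Indeed, for any generator $\sum_{k=1}^{m}\lse_{\al_k}$ of $\lQ_0$, the trivial module $\trivial$ appears as a simple subquotient of $V(\al_1)\tens\cdots\tens V(\al_m)$, so Lemma~\ref{Lem: prop for E}\,\ref{Lem: prop for E: item2} gives $\sum_{k=1}^{m}\rs_{\al_k}=\rE(\trivial)$, and $\rE(\trivial)=0$ because $\Li(\trivial,V(\vpi_i)_a)=0$ for every $(i,a)\in\sig$.

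The heart of the argument is then to show that the map $\bar p_0$ is injective. By Lemma~\ref{lem:Pi}(i)--(iv), there is a commutative triangle
\[
\rlq \xtwoheadrightarrow{\psi'_Q} \lP_0/\lQ_0 \xtwoheadrightarrow{\bar p_0} \rW_0,
\]
whose composition is the surjection $\psi_Q$. Now $\rlq$ is a free abelian group of rank equal to the rank of $\gf$, which by Table~\eqref{Table: root system} coincides with the rank of the root system $(\rES_0,\Delta_0)$. The hypothesis borrowed from Theorem~\ref{Thm: root system} (applied in the form that $\rk_{\Z}(\rW_0)\ge|\Pf|$) combined with surjectivity of $\psi_Q$ forces $\rk_{\Z}(\rW_0)=\rk_{\Z}(\rlq)$. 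Since $\rW_0\subset\Hom(\sig,\Z)$ is torsion-free, the surjection $\psi_Q\col\rlq\epito\rW_0$ between torsion-free groups of the same finite rank must be an isomorphism. Consequently $\psi'_Q$ is injective, and thus $\bar p_0$ is injective as well, establishing $\lP_0/\lQ_0\isoto\rW_0$.

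For the second isomorphism $\lP/\lQ\isoto\rW$, I would invoke the $\scor$-orbit decompositions \eqref{Eq: lP and lQ1} and \eqref{Eq: lP and lQ2} together with Corollary~\ref{Cor: root system for W}, which provides the analogous decomposition $\rW=\bigoplus_{k\in\cor^\times/\scor}(\rW_0)_k$. Since each $\tau_k$ is a group automorphism commuting with the formation of $p$ and with taking simple subquotients of tensor products, the isomorphism $\lP_0/\lQ_0\isoto\rW_0$ translates, in every block indexed by $k\in\cor^\times/\scor$, to $(\lP_0)_k/(\lQ_0)_k\isoto(\rW_0)_k$. Taking direct sums yields $\lP/\lQ\isoto\rW$.

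The single non-formal ingredient is the inequality $\rk_{\Z}(\rW_0)\ge|\Pf|$; this is the only place where Theorem~\ref{Thm: root system} is used, and it is the main obstacle since its verification is done case-by-case in Section~\ref{Sec: proofs}. Once this rank bound is granted, the remainder of the argument is the formal rank/torsion comparison and the direct-sum decomposition sketched above.
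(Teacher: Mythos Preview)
Your proof is correct and follows essentially the same approach as the paper. Both arguments hinge on the factorization $\rlq\epito\lP_0/\lQ_0\epito\rW_0$ from Lemma~\ref{lem:Pi}, the torsion-freeness of $\rW_0\subset\Hom(\sig,\Z)$, and the rank inequality \eqref{eq:atleast}; the paper phrases the conclusion as ``$\lP_0/\lQ_0$ is generated by $r$ elements and surjects onto something of rank $\ge r$'', while you run the same count through the composite $\psi_Q$, but the content is identical. For the second isomorphism the paper cites only \eqref{Eq: lP and lQ1}--\eqref{Eq: lP and lQ2}, whereas you invoke Corollary~\ref{Cor: root system for W}; both are fine, though note that the directness of $\rW=\bigoplus_k(\rW_0)_k$ already follows from Proposition~\ref{Prop: sig and rW}\,\ref{Prop: sig and rW: item4} (the $(\rW_0)_k$ have pairwise disjoint supports in $\Hom(\sig,\Z)$), so the full strength of Theorem~\ref{Thm: root system} is not needed there.
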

\begin{proof}
The second isomorphism easily follows from the first isomorphism and
\eqref{Eq: lP and lQ1}, \eqref{Eq: lP and lQ2}.
Hence let us only show that $\lP_0/\lQ_0\epito\rW_0$ is an isomorphism.

Let $r$ be the rank of $\Df$.
By \eqref{eq:atleast}, the rank of $\rW_0$
is at least $r$.
Let us consider a surjective homomorphism
\eq
&&\lP_0/\lQ_0\epito\rW_0.
\label{mor:PQW}
\eneq
By Lemma \ref{lem:Pi}, 
$\lP_0/\lQ_0$ is generated by $r$ elements.
Hence \eqref{mor:PQW} is an isomorphism.

\end{proof}

For $\la = \sum_{t=1}^k \lse_{\al_t} \in \lP^+$, we set 
$$
\overline{V}(\la) \seteq[ V(\alpha_1) \otimes V(\al_2) \otimes \cdots \otimes V(\al_k) ] \in K(\Ca_\g).
$$
Note that, for $\la, \mu \in \lP^+ $, if $\trivial $ appears in $\overline{V}(\la)$ and $\overline{V}(\mu)$, then 
$\trivial$ also appears in $\overline{V}(\la) \otimes \overline{V}(\mu)$.
Hence any element of $ \lQ $ can be written as $\la - \mu$ with $\la, \mu \in \lP^+$ such that $\one$ appears in both $\overline{V}(\la)$ and 
$\overline{V}(\mu)$.

\begin{theorem}  \label{Thm: block for Ca_al}
For any $\al \in \rW$, the subcategory $ \Ca_{\g, \al}$ is a block of $\Ca_\g$.
\end{theorem}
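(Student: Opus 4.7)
The plan is as follows. By Theorem \ref{Thm: decomposition for Ca} we already have the direct sum decomposition $\Ca_\g=\bigoplus_{\beta\in\rW}\Ca_{\g,\beta}$, so the subcategory $\Ca_{\g,\alpha}$ automatically satisfies condition (i) of Definition \ref{def:block}. What remains is to verify condition (ii), namely that $\Ca_{\g,\alpha}$ admits no further non-trivial direct sum decomposition. In view of Theorem \ref{Thm: blocks}, this reduces to showing that any two simple modules $M,N$ of $\Ca_{\g,\alpha}$ lie in a common block of $\Ca_\g$.

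To achieve this, I will produce a tensor product of fundamental representations of which both $M$ and $N$ are simple subquotients, and then invoke Lemma \ref{Lem: block for tensor}. By Theorem \ref{Thm: basic properties} \ref{Thm: bp5}, $M$ appears as a simple subquotient of a tensor product of fundamentals with affine highest weight $\lambda_M\in\lP^+$, and similarly $N$ appears in one with affine highest weight $\lambda_N\in\lP^+$. Since $\rE(M)=\rE(N)=\alpha$ by the definition of $\Ca_{\g,\alpha}$, we have $p(\lambda_M-\lambda_N)=0$, and hence by Lemma \ref{Lem: Q ker p} the difference $\lambda_M-\lambda_N$ lies in $\lQ$. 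Using the description of $\lQ$ recalled just before the theorem statement, I can therefore write $\lambda_M-\lambda_N=\lambda-\mu$ with $\lambda,\mu\in\lP^+$ such that the trivial module $\one$ appears as a simple subquotient of both $\overline V(\lambda)$ and $\overline V(\mu)$.

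Now set $\nu\seteq\lambda_M+\mu=\lambda_N+\lambda\in\lP^+$; this is a single element of $\lP^+$ representing the same multi-set of fundamentals in two different ways. Choose an ordering of the factors of $\nu$ in which the $\lambda_M$-fundamentals precede the $\mu$-fundamentals, giving a tensor product $T\simeq V(\lambda_M)\otimes V(\mu)$; because $\one$ appears as a simple subquotient of $V(\mu)$, the module $M\otimes\one\simeq M$ appears as a simple subquotient of $T$. A different ordering yields a tensor product $T'\simeq V(\lambda_N)\otimes V(\lambda)$ of the very same multi-set of fundamentals, admitting $N$ as a simple subquotient by the symmetric argument. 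Since $T$ and $T'$ have the same class in the commutative Grothendieck ring $K(\Ca_\g)$, their simple subquotients coincide as a multi-set; Lemma \ref{Lem: block for tensor} then places all of them in a single block, so $M$ and $N$ lie in the same block of $\Ca_\g$. No step here looks like a serious obstacle, since all the key inputs are already in place; the crux of the argument is really the identification $\ker p=\lQ$ furnished by Lemma \ref{Lem: Q ker p}, which is precisely why the decomposition indexed by $\rE$ turns out to be the block decomposition.
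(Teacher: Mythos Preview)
Your proof is correct and follows essentially the same argument as the paper's: you realize the two given simples as subquotients of tensor products of fundamentals, use Lemma \ref{Lem: Q ker p} to adjust both affine highest weights by elements of $\lQ$ so that they coincide in $\lP^+$, and then appeal to Lemma \ref{Lem: block for tensor}. The only cosmetic difference is in the bookkeeping (your $\lambda,\mu$ correspond to the paper's $\mu',\mu$), and your explicit mention that two orderings of the same multiset have equal class in the commutative ring $K(\Ca_\g)$ is exactly what the paper encodes by writing $\overline V(\la+\mu)=\overline V(\la'+\mu')$.
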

\begin{proof}

Let $\al \in \rW$ and let $S, S'$ be simple modules in $\Ca_{\g, \al}$. We shall show that $S$ and $S'$ belong to the same block.

Thanks to Theorem \ref{Thm: basic properties} \ref{Thm: bp5}, there exist $\la, \la' \in \lP^+$ such that $S$ appears in $\overline{V}(\la)$ and 
$S' $ appears in $\overline{V}(\la')$. 
By Lemma \ref{Lem: Q ker p}, we have $\la - \la' \in \ker p =  \lQ$. Thus, there exist $\mu, \mu' \in \lP^+$ such that 
\begin{itemize}
\item $\la - \la' = \mu' - \mu$,
\item $\trivial $ appears in $\overline{V}(\mu)$ and $\overline{V}(\mu')$.
\end{itemize}
Thus we have 
\bna
\item $ \la + \mu = \la' + \mu' $, i.e., $ \overline{V}(\la+\mu) = \overline{V}(\la'+\mu') $, 
\item  $S$ appears in $\overline{V}(\la) \otimes \overline{V}(\mu) = \overline{V}(\la+\mu) $, 
\item $S'$ appears in $ \overline{V}(\la') \otimes \overline{V}(\mu') = \overline{V}(\la'+\mu')$,
\ee
which tells us that $S$ and $S'$ belong to the same block by Lemma \ref{Lem: block for tensor}.
\end{proof}

Combining Theorem \ref{Thm: decomposition for Ca} with Theorem \ref{Thm: block for Ca_al}, 
we have the following block decomposition.

\begin{corollary}
There exist the following block decompositions
\begin{align*}
\Ca_\g = \bigoplus_{\beta \in \rW} \Ca_{\g, \beta} \quad \text{and} \quad \Ca_\g^0 = \bigoplus_{\beta \in \rW_0} \Ca_{\g, \beta}.
\end{align*}
\end{corollary}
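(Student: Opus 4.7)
The plan is to assemble the statement directly from the two preceding theorems in this subsection. Theorem \ref{Thm: decomposition for Ca} already supplies the direct sum decompositions
\[
\Ca_\g = \bigoplus_{\beta \in \rW} \Ca_{\g, \beta} \qquad \text{and} \qquad \Ca_\g^0 = \bigoplus_{\beta \in \rW_0} \Ca_{\g, \beta},
\]
so in particular each $\Ca_{\g,\beta}$ is a direct summand of $\Ca_\g$ (and of $\Ca_\g^0$ whenever $\beta \in \rW_0$), which verifies condition \ref{it:block1} of Definition \ref{def:block}. Theorem \ref{Thm: block for Ca_al} then supplies the second defining property of a block, namely that $\Ca_{\g,\beta}$ admits no nontrivial decomposition as a direct sum of full abelian subcategories. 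Combining the two theorems therefore shows that each $\Ca_{\g,\beta}$ is a block of $\Ca_\g$, and the first asserted decomposition is a block decomposition.

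For the decomposition of $\Ca_\g^0$, I would note that for $\beta\in\rW_0$ every object of $\Ca_{\g,\beta}$ lies in $\Ca_\g^0$, since by definition its simple subquotients $S$ satisfy $\rE(S)=\beta\in\rW_0$ and hence appear in tensor products of the fundamental modules $V(\varpi_i)_x$ with $(i,x)\in\sigZ$. Thus $\Ca_{\g,\beta}$ is a full subcategory of $\Ca_\g^0$, and any hypothetical nontrivial decomposition of $\Ca_{\g,\beta}$ inside $\Ca_\g^0$ would also be a decomposition inside $\Ca_\g$, contradicting Theorem \ref{Thm: block for Ca_al}. Hence each $\Ca_{\g,\beta}$ with $\beta \in \rW_0$ is also a block of $\Ca_\g^0$, yielding the second decomposition. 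There is no genuine obstacle here: the substantive content is contained in Theorems \ref{Thm: decomposition for Ca} and \ref{Thm: block for Ca_al}, and the corollary is merely their conjunction together with the trivial observation above about restricting to $\Ca_\g^0$.
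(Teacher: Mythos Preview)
Your argument for $\Ca_\g$ is correct and matches the paper's: the corollary is literally the conjunction of Theorems~\ref{Thm: decomposition for Ca} and~\ref{Thm: block for Ca_al}.

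For $\Ca_\g^0$, however, there is a gap. Your claim that $\rE(S)\in\rW_0$ forces $S\in\Ca_\g^0$ is false. For instance, in type $A_1^{(1)}$ take $c\in\cor^\times$ with $(1,c)\notin\sigZ$; the tensor product $V(\varpi_1)_c\otimes V(\varpi_1)_{cp^*}$ is reducible and its non-trivial simple subquotient $N$ satisfies $\rE(N)=\rs_{1,c}+\rs_{1,cp^*}=0\in\rW_0$ by Lemma~\ref{Lem: tau_t}, yet the affine highest weight of $N$ is supported on $(\sigZ)_c\ne\sigZ$, so $N\notin\Ca_\g^0$ by Theorem~\ref{Thm: basic properties}\,\ref{Thm: bp5}. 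Thus $\Ca_{\g,0}\not\subset\Ca_\g^0$, and your reduction to Theorem~\ref{Thm: block for Ca_al} via the inclusion $\Ca_{\g,\beta}\subset\Ca_\g^0$ does not go through.

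The correct reading of the $\Ca_\g^0$ decomposition in Theorem~\ref{Thm: decomposition for Ca} (which follows from Lemma~\ref{Lem: decomposition} applied to $\Ca_\g^0$) is that the summand indexed by $\beta\in\rW_0$ is $\Ca_{\g,\beta}\cap\Ca_\g^0$. To see that this is a block of $\Ca_\g^0$, rerun the proof of Theorem~\ref{Thm: block for Ca_al} inside $\Ca_\g^0$: for simples $S,S'\in\Ca_\g^0$ with $\rE(S)=\rE(S')$ choose $\la,\la'\in\lP^+$ supported on $\sigZ$; then $\la-\la'\in\ker p_0=\lQ_0$ by Lemma~\ref{Lem: Q ker p}, so one may take $\mu,\mu'\in\lP_0^+$ with $\la+\mu=\la'+\mu'$ and $\trivial$ appearing in $\overline V(\mu),\overline V(\mu')$. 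The tensor product $\overline V(\la+\mu)$ now lives in $\Ca_\g^0$, and Lemma~\ref{Lem: block for tensor} (whose proof uses only that the ambient category is closed under tensor products, subquotients and extensions) applies inside $\Ca_\g^0$ to link $S$ and $S'$.
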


\begin{remark} \label{Rmk: CM EM}
Lemma \ref{Lem: Q ker p} gives a group presentation of $\rW$ which parameterizes
the block decomposition of $\Ca_\g$.
When $\g$ is of untwisted type, the block decomposition of $\Ca_\g$ was given in \cite{CM05} and \cite{EM03}.
Considering the results of \cite{CM05} and \cite{EM03} in our setting,
their results give another group presentation of $\rW$.
Let us explain more precisely in our setting.

Suppose that $\g$ is of untwisted type. We define 
$$
\lP_S \seteq \bigoplus_{(i,a)\in \sig,\ i\in S} \Z \lse_{(i,a)},
$$
where 
$$ 
S = 
\begin{cases}
 \{1\} & \text{if $\g$ is of type $A_n^{(1)}$, $C_n^{(1)}$ or $E_6^{(1)}$,}   \\
\{n\} & \text{ if $\g$ is of type $B_n^{(1)}$ or type $D_n^{(1)}$ ($n$ odd)},  \\ 
\{ n-1, n \} & \text{ if $\g$ is of type $D_n^{(1)}$ ($n$ even)}, 
\end{cases}
$$
and $S$ is $\{2\}$, $\{4\}$, $\{7\}$ and $\{8\}$ if $\g$ is of type $G_2^{(1)}$, $F_4^{(1)}$, $E_7^{(1)}$ 
and $E_8^{(1)}$ respectively.

One can show that $ p( \lP_S ) = \rW$. Thus we have the surjective homomorphism
$$
p_S \seteq p\mid_{ \lP_S}\col \lP_S \twoheadrightarrow \rW.
$$
Then the results given in \cite[Proposition 4.1, Appendix A]{CM05} and \cite[Lemma 4.6, Section 6]{EM03} 
explain that 
the kernel $\ker (p_S)$ is generated by the subset $G$ described 
below: 
\bna
\item if $\g$ is of type $A_n^{(1)}$, then $G = \{  \sum_{k=0}^n \lse_{(1, tq^{2k})} \mid   t \in \cor^\times \}$,
\item if $\g$ is of type $B_n^{(1)}$, then $G = \{ \lse_{(n,t)} + \lse_{(n,tq^{2n-1})} \mid   t \in \cor^\times \}$,
\item if $\g$ is of type $C_n^{(1)}$, then $G = \{   \lse_{(1,t)} + \lse_{(1,tq^{ n+1})} \mid   t \in \cor^\times \}$,
\item if $\g$ is of type $D_n^{(1)}$ and $n$ is odd, then $G = \{   \lse_{n,t} + \lse_{n,tq^2} + \lse_{n,tq^{2n-2}} + \lse_{n,tq^{2n}} \mid   t \in \cor^\times \}$,
\item if $\g$ is of type $D_n^{(1)}$ and $n$ is even, then $G = \{   \lse_{(n-1,t)} + \lse_{(n-1,tq^2)} + \lse_{(n,tq^{2n-2})} + \lse_{(n,tq^{2n})}, \ \lse_{(n-1,t)} + \lse_{(n-1,tq^{2n-2})}, \   \lse_{(n ,t)} + \lse_{(n ,tq^{2n-2})}
  \mid   t \in \cor^\times \}$,
\item if $\g$ is of type $E_6^{(1)}$, then $G = \{  \lse_{(1,t)} + \lse_{(1,tq^{ 8})} + \lse_{(1,tq^{ 16})} , \  \lse_{(1,t)} + \lse_{(1,tq^{ 2})} + \lse_{(1,tq^{ 4})} + \lse_{(1,tq^{ 12})} +  \lse_{(1,tq^{ 14})} +  \lse_{(1,tq^{ 16})}    \mid   t \in \cor^\times \}$,
\item if $\g$ is of type $E_7^{(1)}$, then $G = \{   \lse_{(7,t)} + \lse_{(7,tq^{ 18})}, \  \lse_{(7,t)} + \lse_{(7,tq^{ 2})} + \lse_{(7,tq^{ 12})} + \lse_{(7,tq^{ 14})} +   \lse_{(7,tq^{ 24})} + \lse_{(7,tq^{ 26})}  \mid   t \in \cor^\times \}$,
\item if $\g$ is of type $E_8^{(1)}$, then $G = \{ \lse_{(8,t)} + \lse_{(8,tq^{ 30})}, \ \lse_{(8,t)} + \lse_{(8,tq^{ 20})} + \lse_{(8,tq^{ 40})} ,\  \lse_{(8,t)} + \lse_{(8,tq^{ 12})} + \lse_{(8,tq^{ 24})} +  \lse_{(8,tq^{ 36})} +  \lse_{(8,tq^{ 48})}   \mid   t \in \cor^\times \}$,
\item if $\g$ is of type $F_4^{(1)}$, then $G = \{ \lse_{(4,t)} + \lse_{(4,tq^{ 9})}   ,\   \lse_{(4,t)} + \lse_{(4,tq^{6})} + \lse_{(4,tq^{ 12})}   \mid   t \in \cor^\times \}$,
\item if $\g$ is of type $G_2^{(1)}$, then $G = \{\lse_{(2,t)} + \lse_{(2,tq^{ 4})}   ,\   \lse_{(2,t)} + \lse_{(2,{t(-q_t)}^{8})} + \lse_{(2,{t(-q_t)}^{ 16}})   \mid   t \in \cor^\times \}$.
\ee
We remark that there are typos in the descriptions for type $ E_8$ and $F_4$ in \cite[Appendix A]{CM05}.
\end{remark}

\vskip 2em 

\section{Proof of Theorem \ref{Thm: root system}} \label{Sec: proofs}

\subsection{Strategy of the proof}

\ 
We now start to prove Theorem \ref{Thm: root system}.
We shall use the same notations given in Section \ref{Sec: HL cat} and Section $\ref{Sec: CaQ}$. Recall the explicit descriptions for $\sigZ$ and $\sigQ$.
Let $\Pf=\st{\al_i}_{i\in \If}$ be the 
set of simple roots of
$\Df$, and let $\rlq$ be the root lattice of $\gf$.
Hence we have $$\Pf \subset\Df\subset\rlq.$$
Then by Lemma~\ref{lem:Pi}, we have
\eq
  \rW_0 = \sum_{i\in \If}\Z \rs_{\phi_Q(\al_i)}.
\label{eq:WQ}
\eneq
where $\phi_Q\col \Df\isoto\sigQ$ is the bijection given in $\eqref{Eq: phi}$.

Let $\MQ \seteq ( \mQ{i,j}  )_{i,j\in \If }$ be the square matrix given by 
$$
\mQ{i,j} \seteq ( \rs_{\phi_Q(\alpha_i)}, \rs_{\phi_Q(\al_j)} ).
$$
Thanks to Lemma \ref{Lem: Li = -2}, we know that 
$$
\mQ{i,i} = 2 \qquad \text{ for any } i\in \If. 
$$
To prove Theorem  \ref{Thm: root system}, it suffices to show that $\MQ$ is the Cartan matrix of the finite simple Lie algebra $\gf$,  i.e.,
\eq
( \rs_{\phi_Q(\alpha_i)}, \rs_{\phi_Q(\al_j)} )=(\al_i,\al_j).
\label{eq:isometry}\eneq

Indeed,  \eqref{eq:isometry} implies the following lemma, and 
 Theorem~\ref{Thm: root system} is its immediate consequence.

\Lemma Assume \eqref{eq:isometry}. Then the map
$\Df\ni\beta\mapsto \rE\bl V_Q(\beta)\br\in\Delta_0\subset\rW_0$
extends uniquely to an additive isomorphism
$$\psi_{Q}\col\rlq\isoto\rW_0.$$
Moreover, it preserves the inner products of $\rlq$ and $\rW_0$.
\enlemma
\Proof
Since the Cartan matrix is a symmetric positive-definite matrix,
$\st{\rs_{\phi_Q(\alpha_i)}}_{i\in \If}$ is linearly independent.
Hence we obtain
\eq
\text{the rank of $\rW_0$ is at least the rank $r$ of $\gf$.}
\label{eq:atleast}
\eneq 
On the other hand, 
Lemma~\ref{lem:Pi} implies that 
$\psi_Q\col\rlq\to\rW_0$ is surjective.
Hence, $\psi_Q$ is an isomorphism.
Moreover, \eqref{eq:isometry} shows that $\psi_Q$ preserves the
inner products of $\rlq$ and $\rW_0$.
The other assertions then  easily follow.  
\QED

\subsection{Calculation of the inner products} \label{Sec: calculation of (,)}\ 

In this subsection, we shall give a type-by-type proof of \eqref{eq:isometry}.

\begin{lemma} \label{Lem: Li for ADE}
Suppose that $ \g$ is of affine $ADE$ type.  Let $i,j \in I_0$.
\bnum
\item For $t\in \Z$, we have 
$$
\de( V(\varpi_i),  V(\varpi_{j})_{ (-q)^t} ) = \delta( 2 \le |t| \le h ) \tcmc_{i,j}(|t|-1),
$$
where $h$ is the Coxeter number of $\g$ and
$ \tcmc_{i,j}(k)$ is the integer defined in  $\eqref{Eq: def of tcmt}$ in Appendix $\ref{App: denominators}$.

\item If $0 < t < 2h$, then we have 
$$
\Li(V(\varpi_i), V(\varpi_j)_{(-q)^{t}} )   = \tcmc_{i,j}(t-1)-\tcmc_{i,j}(t+1),
$$
and $\Li(V(\varpi_i), V(\varpi_j)) = -2 \delta_{i,j}$.

\ee

\end{lemma}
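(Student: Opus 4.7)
The plan is to reduce both assertions to explicit computations with the denominator formula from Appendix~\ref{App: denominators} and the symmetries of the coefficients $\tcmc_{i,j}(k)$.

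For (i), I would apply Proposition~\ref{Prop: d and d} together with the spectral-parameter shifts $d_{M,N_a}(z)=d_{M,N}(az)$ and $d_{N_a,M}(z)=d_{N,M}(z/a)$ (up to units in $\cor[z^{\pm1}]$) to obtain
\eqn
\de\bigl(V(\varpi_i), V(\varpi_j)_{(-q)^t}\bigr)
=\zero_{z=1}\Bigl(d_{i,j}\bigl((-q)^t z\bigr)\,d_{j,i}\bigl((-q)^{-t} z^{-1}\bigr)\Bigr).
\eneqn
The ADE denominator formula expresses $d_{i,j}(z)$ as a product of factors $(z-(-q)^{k+1})^{\tcmc_{i,j}(k)}$ supported on $1\le k\le h-1$. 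For $t>0$, only $d_{i,j}((-q)^t z)$ can vanish at $z=1$, namely via the index $k=t-1$, contributing $\tcmc_{i,j}(t-1)$ whenever $2\le t\le h$. For $t<0$, only $d_{j,i}((-q)^{-t}z^{-1})$ contributes, yielding $\tcmc_{j,i}(|t|-1)=\tcmc_{i,j}(|t|-1)$ by the ADE symmetry $\tcmc_{i,j}=\tcmc_{j,i}$. For $|t|\le 1$ or $|t|>h$, neither factor has a zero at $z=1$.

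For (ii), I would invoke Proposition~\ref{Prop: La and d} to write $\Li\bigl(V(\varpi_i),V(\varpi_j)_{(-q)^t}\bigr)=\sum_{k\in\Z}(-1)^k\,\de\bigl(V(\varpi_i),\D^k V(\varpi_j)_{(-q)^t}\bigr)$. Since $p^*=(-q)^h$ for ADE (Table~\eqref{Table: p*}), \eqref{eq: dual} yields $\D^k V(\varpi_j)_{(-q)^t}\simeq V(\varpi_{j^{*k}})_{(-q)^{t+kh}}$, where $j^{*k}$ alternates between $j$ and $j^*$. Substituting (i), for $0<t<2h$ only $k\in\{0,-1,-2\}$ can produce $|t+kh|\in[2,h]$, so the sum has at most three nontrivial terms. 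A case split on $t$ across the subranges $[2,h-2]$, $\{h-1,h,h+1\}$, $[h+2,2h-1]$, combined with the Fujita symmetry $\tcmc_{i,j}(k)=\tcmc_{j,i^*}(h-k)$ from \cite[Lemma 3.7]{Fu19} and the ADE involution $j^{**}=j$, converts every partial sum into the stated difference $\tcmc_{i,j}(t-1)-\tcmc_{i,j}(t+1)$, where $\tcmc_{i,j}(k)$ for $k\notin[1,h-1]$ is interpreted via the natural antiperiodic extension $\tcmc_{i,j}(k+h)=-\tcmc_{i^*,j}(k)$. For the edge case $t=0$, both $k=\pm1$ land on the boundary $|kh|=h$, producing the double contribution $-2\,\tcmc_{i,j^*}(h-1)=-2\delta_{i,j}$ after applying Fujita's symmetry and $j^{**}=j$.

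The delicate point will be the boundary case $t=h$, where $k=0$ and $k=-2$ both contribute $\tcmc_{i,j}(h-1)$ rather than cancelling, producing the sum $2\tcmc_{i,j}(h-1)$. Matching this to $\tcmc_{i,j}(h-1)-\tcmc_{i,j}(h+1)$ requires $\tcmc_{i,j}(h+1)=-\tcmc_{i,j}(h-1)$, which is exactly the antiperiodic extension combined with Fujita's symmetry. Once this convention is consistently adopted, the remaining cases reduce to routine bookkeeping.
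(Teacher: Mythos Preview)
Your argument for (i) is the same as the paper's: apply Proposition~\ref{Prop: d and d} together with the ADE denominator formula~\eqref{Eq: denominators for ADE} and read off the order of vanishing at $z=1$.

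For (ii), you take a genuinely different route. The paper does \emph{not} use Proposition~\ref{Prop: La and d}; instead it computes the universal coefficient $a_{i,j}((-q)^t z)$ directly via the formula \cite[(A.13)]{AK}, rewrites the product over $1\le k\le 2h-1$ using the Fujita identities $\tcmc_{i,j}(h+k)=-\tcmc_{i,j}(h-k)=-\tcmc_{j^*,i}(k)$, and then evaluates $\Li=-\Deg^\infty(a_{i,j})$ by counting which exponents land in $2h\Z$. Your approach, expressing $\Li$ as the alternating sum $\sum_k(-1)^k\de(V(\varpi_i),\D^kV(\varpi_j)_{(-q)^t})$ and feeding in part~(i), is correct and arguably more self-contained: it stays entirely within the invariants developed in Section~\ref{Sec: new invariants} and avoids importing the product formula for $a_{i,j}$ from \cite{AK}. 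The paper's approach, on the other hand, makes the computation a single pass and sidesteps the case-by-case bookkeeping across the subranges of $t$ that you outline.

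Two small points to tighten. First, your antiperiodic extension $\tcmc_{i,j}(k+h)=-\tcmc_{i^*,j}(k)$ is equivalent to the paper's $\tcmc_{i,j}(k+h)=-\tcmc_{j^*,i}(k)$ only via the diagram-automorphism symmetry $\tcmc_{i,j}=\tcmc_{i^*,j^*}$; you should state this explicitly so the reader sees the two conventions agree. Second, for the $t=0$ endpoint you implicitly use $\tcmc_{i,j}(1)=\delta_{i,j}$; this is immediate from the definition of the inverse quantum Cartan matrix (or from Proposition~\ref{Prop: formula for ADE}), but it is worth citing since it is the step that produces the $\delta_{i,j}$.
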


\begin{proof}

(i)
For $i,j \in I$,  we write $d_{i,j}(z) \seteq d_{V(\varpi_i), V(\varpi_j)} (z)$. 
Combining Proposition \ref{Prop: d and d} with the denominator formula
$$
d_{i,j}(z) = \prod_{k=1}^{h-1}  (z - (-q)^{k+1})^{\tcmc_{i,j}(k)}
$$ 
written in $\eqref{Eq: denominators for ADE}$,  we compute 
\begin{align*}
\de( V(\varpi_i),  V(\varpi_{j})_{ (-q)^t} ) & = \delta( 2 \le t \le h ) \tcmc_{i,j}(t-1) + \delta( 2 \le -t \le h ) \tcmc_{i,j}(-t-1) \\
&= \delta( 2 \le |t| \le h ) \tcmc_{i,j}(|t|-1).
\end{align*}

(ii)
For $a \in \Z$, let $ [a]\seteq\prod_{n=0}^\infty (1 - (-q)^{a} \tp^n z) $. 
Combining the equation \cite[(A.13)]{AK}  with the denominator formula \eqref{Eq: denominators for ADE}, we have
\begin{align*}
a_{i,j} ((-q)^t z) &=\prod_{1\le k \le h-1} \dfrac{([h+k+1+t]^{\tcmc_{j,i^*}(k)}) ( [h-k-1+t]^{\tcmc_{j,i^*}(k)} )}
{ ([k+1+t]^{\tcmc_{i,j}(k)}) ([2h-k-1+t]^{\tcmc_{i,j}(k)}) }  \\
&=\prod_{1\le k \le h-1} \dfrac{([h+k+1+t]^{-\tcmc_{i,j}(h+k)}) ( [h-k-1+t]^{-\tcmc_{i,j}(h+k)} )}
{ ([k+1+t]^{\tcmc_{i,j}(k)}) ([2h-k-1+t]^{\tcmc_{i,j}(k)}) } \\
&=\prod_{1\le k \le 2h-1} \dfrac{1}{ ([k+1+t]^{\tcmc_{i,j}(k)}) ([2h-k-1+t]^{\tcmc_{i,j}(k)}) }. 
\end{align*}
for any $t\in \Z$,
up to a constant multiple. 
For the second equality, we used 
\eqn 
\tcmc_{i,j}(h+k) = -\tcmc_{i,j}(h-k)= -\tcmc_{j^*,i}(k) \quad \text{for} \ 1\le k \le h-1
\eneqn
which come from \cite[Lemma 3.7 (4), (5)]{Fu19}.
Hence we have
\begin{align*}
& \Li(V(\varpi_i), V(\varpi_j)_{(-q)^{t}} )
=- \Deg^{\infty} (a_{i,j} ((-q)^t z) )\\
&=\quad   \sum_{1\le k \le 2h-1} \left(\ \tcmc_{i,j}(k)(\delta(k+1+t\equiv 0 \mod 2h) + \delta(2h-k-1+t\equiv 0 \mod 2h)  ) \right) \\
&= \tcmc_{ij}(2h-t-1) +\tcmc_{ij}(t-1) \\ 
&= - \tcmc_{ij}( t+1) + \tcmc_{ij}(t-1) 
\end{align*}
for $1\le t\le 2h-1$.
If $t=0$, then we have  
$$
 \Li(V(\varpi_i), V(\varpi_j)  ) =  2 \tcmc_{i,j}(2h-1) =- 2\tcmc_{i,j}(1) =
- 2\delta_{i,j},
$$
as desired.

\end{proof}

\vskip 0.5 em

\sn
\textbf{(Type $A_n^{(1)}$)} \ 
If $n=1$, then it is obvious that $\MQ$ is a Cartan matrix. Thus, we may assume that $n\ge 2$.  
Recall the explicit description of $\sigQ$ for type $A_n^{(1)}$. Note that the Dynkin quiver corresponding to $\sigQ$ is given in $\eqref{Eq: DQ}$. In this case, $h=n+1$ and
 $$
 \phi_Q(\alpha_i) = (1, (-q)^{ 2 -2i}) \in \sigQ \quad \text{ for } i\in \If=
\st{1,\ldots,n}
$$ 
by \cite[Lemma 3.2.3]{KKK15B}. 
For example, if it is of type $A_4^{(1)}$, then elements $(i,(-q)^{k})$ of $ \sigQ $ with the values of $ \phi_Q^{-1} $ can be drawn as follows:
$$ 
\scalebox{0.8}{\xymatrix@C=1ex@R=  0.0ex{ 
 i \backslash k     \ar@{-}[ddddd]<3ex> \ar@{-}[rrrrrrrr]<-2.0ex>   & -6 & -5 & -4 & -3 & -2 & -1 & 0 &  & &   \\
1    & \underline{(0001)} & & \underline{(0010)} & & \underline{(0100)} & & \underline{(1000)} & & & \\
2    & & (0011)& & (0110)& & (1100)& &  &  \\
3     & & & (0111) & &(1110) & &   & \\
4    & & &  & (1111)& &  && & \\
& 
}}
$$
Here, $(a_1,a_2,a_3,a_4) \seteq \sum_{k=1}^4 a_k\al_k \in \Df$ is placed at the position $\phi_Q( a_1, a_2, a_3, a_4 )$, and the underlined ones are simple roots.
Using the formula given in Appendix  \ref{App: ADE}, one can compute that 
 $\tcmc_{1,1}(2k)=0$ and 
$$\tcmc_{1,1}(2k+1) = (\tau^k \al_1,\varpi_1) = (\al_{k+1},\varpi_1)=\delta_{k,0} 
 \quad  \text{ for } 0 \le k < n.  
$$ 

Lemma \ref{Lem: Li for ADE} implies that 
$$
 \Li(V(\varpi_1), V(\varpi_1)_{(-q)^{2k}} ) = \delta_{k,1} \quad \text{ for $k\in \Z$ with $ 1 \le k \le n-1 $.}
$$
Therefore, for $i > j$, we have 
 $$
\mQ{i,j} = -  \Li(V(\varpi_1), V(\varpi_1)_{(-q)^{2(i-j)}} ) = - \delta_{i-j, 1},
$$
which tells us that $ \MQ$ is a Cartan matrix of type $A_n$.

\sn
\textbf{(Type $B_n^{(1)}$)} \ 
Recall the explicit description of $\sigQ$ for type $B_n^{(1)}$ ($n \ge 2$), which can be obtained from \cite{KO18}. 
Note that the Dynkin diagram of $B_2^{(1)}$ is given in $\eqref{Eq: DD}$.
In this case, $\gf$ is of type $A_{2n-1}$ and, for 
 $i\in \If=\st{1,\ldots, 2n-1}$  we have 
 $$
 \phi_Q(\alpha_i) = 
\begin{cases}
 (1, (-1)^{n+1} q_s^{2n+1-4i}) & \text{ if } 1 \le i \le n-1, \\
  (n, q^{ -2n+2 }) & \text{ if } i=n, \\
  (n, q^{ -2n+3 }) & \text{ if } i=n+1, \\
  (1, (-1)^{n+1}   q_s ^{-6n+4i-1}) & \text{ if } n+2 \le i \le 2n-1.  
\end{cases}
$$ 
For example, if it is of type $B_3^{(1)}$, then elements of $ \sigQ $ with the values of $ \phi_Q^{-1} $ can be drawn as follows: 
\begin{align*} 
& \scalebox{0.7}{\xymatrix@C=0.0ex@R=-0.5ex{ 
 i \backslash k     \ar@{-}[ddddd]<1.8ex> \ar@{-}[rrrrrrrrrrrrr]<-1.3ex>   & -8 & -7 & -6 & -5 & -4 & -3 & -2 & -1 &0  &1 & 2&3  &\\
1    &  & & & (00111) &  & (11110) &  & \underline{(01000)} & & \underline{(00001)} & & \underline{(10000)} & &:  \text{ \large $(-1)^{i+3} q_s ^k$} \\
2    & & (00110) & & (01110) & & (01111) & & (11111) & & (11000)   \\
  \ar@{.}[rrrrrrrrrrrrr]<0ex>     &   &   &   &   &   &   &   &  & & &&& \\
3    & \underline{(00100)} & & \underline{(00010)} & & (01100) & & (00011) & & (11100) &   & && & :  \text{ \large $q_s^k$} \\
& 
}} 
\end{align*}
Here we set $  (a_1,a_2,a_3,a_4,a_5)  \seteq \sum_{k=1}^5 a_k\al_k \in \Df$ and the underlined ones are simple roots. 
Combining Proposition \ref{Prop: La and d} and Proposition \ref{Prop: d and d}
with the denominator formula given in Appendix \ref{App: denominators}, we compute that 
$ \de (V_Q(\al_i), \D^k V_Q(\al_j) ) = 0 $ for $ i\ne j, k \ne0$ and 
\begin{align*}
\Li(V(\varpi_1), V(\varpi_1)_{q^k}) &= \de ( V(\varpi_1), V(\varpi_1)_{q^k} ) \\ 
& = \delta_{k,2} \qquad \text{ for } k=1, 2, \ldots, 2n-4,\\
\Li(V(\varpi_n), V(\varpi_1)_{(-1)^{n+1}q_s^t}) &=  \de ( V(\varpi_n), V(\varpi_1)_{(-1)^{n+1}q_s^t} ) \\
& = \delta_{t, 2n+1} \qquad \text{ for } t=2n-1, 2n+1 , \ldots, 6n-7,\\
\Li(V(\varpi_n), V(\varpi_n)_{q}) &= 1.  
\end{align*}

Therefore, for $i > j$, we obtain
 $$
\mQ{i,j} =  - \delta_{i-j, 1},
$$
which tells us that $ \MQ$ is a Cartan matrix of type $A_{2n-1}$.

\sn
\textbf{(Type $C_n^{(1)}$)} \ 
Recall the explicit description of $\sigQ$ for type $C_n^{(1)}$ ($n \ge 3$), which can be obtained from \cite{KO18}. 
In this case, $\gf$ is of type $D_{n+1}$ and, for $1 \le i \le n+1$ we have 
 $$
 \phi_Q (\alpha_i) = 
\begin{cases}
 (1, (-q_s)^{2-2i}) & \text{ if } 1 \le i \le n, \\
  (n, (-q_s)^{-3n+1}) & \text{ if } i=n+1.  
\end{cases}
$$ 
For example, if it is of type $C_4^{(1)}$, then elements of $ \sigQ $ with the values of $ \phi_Q^{-1} $ can be drawn as follows: 
$$ 
\scalebox{0.65}{\xymatrix@C=-1ex@R=-0.5ex{ 
 i \backslash k     \ar@{-}[ddddd]<1.5ex> \ar@{-}[rrrrrrrrrrrrr]<-1.3ex>  &-11 &-10& -9 & -8 & -7 & -6 & -5 & -4 & -3 & -2  & -1 & 0 &  \\
1   &&  &   & \drt{1}{1110} &  & \underline{\drt{0}{0001}} &  &  \underline{\drt{0}{0010}}  &  & \underline{\drt{0}{0100}}  & & \underline{\drt{0}{1000}} & & : \text{\Large $(-q_s)^k$}  \\
2   && & \drt{1}{0110} &  & \drt{1}{1111}    & & \drt{0}{0011}  & &  \drt{0}{0110}    &  & \drt{0}{1100}     \\
3   &&  \drt{1}{0010} & & \drt{1}{0111}    &   & \drt{1}{1121}     &  &  \drt{0}{0111}      &   & \drt{0}{1110}    \\
4   & \underline{\drt{1}{0000}} & & {\drt{1}{0011}}   & & \drt{1}{0121}    &  & \drt{1}{1221}    &  & \drt{0}{1111}   & & \\
& 
}}
$$
Here we set $ {\scriptstyle \drt{a_5}{a_1a_2a_3a_4}} \seteq \sum_{k=1}^5 a_k\al_k \in \Df$ and the underlined ones are simple roots. 
Combining Proposition \ref{Prop: La and d} and Proposition \ref{Prop: d and d}
with the denominator formula given in Appendix \ref{App: denominators}, we compute that $ \de (V_Q(\al_i), \D^k V_Q(\al_j) ) = 0 $ for $ i\ne j, k \ne0$ and
\begin{align*}
\Li(V(\varpi_1), V(\varpi_1)_{(-q_s)^k}) &= \de ( V(\varpi_1), V(\varpi_1)_{(-q_s)^k} ) \\ 
& = \delta_{k,2} \qquad \text{ for } k=2, 4, \ldots, 2n-2,\\
\Li(V(\varpi_n), V(\varpi_1)_{(-q_s)^t}) &=  \de ( V(\varpi_n), V(\varpi_1)_{(-q_s)^t} ) \\
& = \delta_{t, n+3} \qquad \text{ for } t=n+1, n+3, \ldots, 3n-1.
\end{align*}
Therefore, for $i > j$, we have 
 $$
\mQ{i,j} = 
\begin{cases}
-1 & \text{ if ($i \le n$ and $i-j=1$) or  $(i,j)= (n+1,n-1) $},    \\
0 & \text{ otherwise,} 
\end{cases}
$$
which says that $ \MQ$ is a Cartan matrix of type $D_{  n+1}$.

\sn
\textbf{(Type $D_n^{(1)}$)} \ 
Recall the explicit description of $\sigQ$ for type $D_n^{(1)}$ ($n \ge 4$). Note that the Dynkin quiver corresponding to $\sig_Q$ is given in $\eqref{Eq: DQ}$. In this case $h=2n-2$ and,
 for $1 \le i \le n$,  
 $$
 \phi_Q(\alpha_i) = 
\begin{cases}
 (1, (-q)^{-2(i-1)}) & \text{ if } i \le n-2, \\
  (n-1, (-q)^{ -3n+6 }) & \text{ if ($i = n-1$ and $n$ is even) or ($i = n$ and $n$ is odd)}, \\
  (n, (-q)^{ -3n+6 }) & \text{ if ($i = n $ and $n$ is even) or ($i = n-1$ and $n$ is odd),}  
\end{cases}
$$ 
by \cite[Lemma 3.2.3]{KKK15B}. 
For example, if it is of type $D_5^{(1)}$, then elements $(i, (-q)^k)$ of $ \sigQ $ with the values of $ \phi_Q^{-1} $ can be drawn as follows: 
$$ 
\scalebox{0.65}{\xymatrix@C=0ex@R=-1ex{ 
 i \backslash k     \ar@{-}[dddddd]<1.7ex> \ar@{-}[rrrrrrrrrrr]<-1.2ex>   & -9 & -8 & -7 & -6 & -5 & -4 & -3 & -2  & -1 & 0 & \\
1    &   & &  & \drt{1}{1111} &  &  \underline{\drt{0}{0010}}  &  & \underline{\drt{0}{0100}}  & & \underline{\drt{0}{1000}}   \\
2    & &  & \drt{1}{0111}    & & \drt{1}{1121}  & &  \drt{0}{0110}    &  & \drt{0}{1100}     \\
3     & & \drt{1}{0011}    &   & \drt{1}{0121}     &  &  \drt{1}{1221}      &   & \drt{0}{1110}    \\
4    & \underline{\drt{1}{0000}}   & & \drt{0}{0011}    &  & \drt{1}{0110}    &  & \drt{0}{1111}   & & \\
5    & \underline{\drt{0}{0001}}   & & \drt{1}{0010}    &  & \drt{0}{0111}    &  & \drt{1}{1110}   & & \\
& 
}}
$$
Here we set $ {\scriptstyle \drt{a_5}{a_1a_2a_3a_4}} \seteq \sum_{k=1}^5 a_k\al_k \in \Df$ and the underlined ones are simple roots. 
Using the formula given in Appendix  \ref{App: ADE}, one can compute that, for $1 \le k < h$,  
\begin{align*}
&\tcmc_{1,1}(k) = \delta_{k,1} + \delta_{k,2n-3}, \qquad \tcmc_{n,1}(k)=\tcmc_{n-1,1}(k) = \delta_{k,n-1} , \\
 & \tcmc_{n,n}(k) = \tcmc_{n-1,n-1}(k) = \delta( k\equiv 1 \ \mod 4 ), \\ 
 & \tcmc_{n,n-1}(k) = \tcmc_{n-1,n}(k) = \delta( k\equiv 3 \ \mod 4 ).
\end{align*} 
Combining with Lemma \ref{Lem: Li for ADE}, we compute that
\begin{align*}
 \Li(V(\varpi_1), V(\varpi_1)_{(-q)^{k}} ) &= \delta_{k,2} \quad \text{ for $ 2 \le  k \le h-4 $}, \\
 \Li(V(\varpi_n), V(\varpi_1)_{(-q)^{k}} ) &= \delta_{k,n} \quad \text{ for $ n \le  k \le 3n-6 $}, \\
 \Li(V(\varpi_n), V(\varpi_{n-1}))  &= 0.
\end{align*}
Therefore, for $i > j$, we have 
 $$
\mQ{i,j} = 
\begin{cases}
-1 & \text{ if ($i \le n-1$ and $i-j=1$) or  $(i,j)= (n,n-2) $},    \\
0 & \text{ otherwise,} 
\end{cases}
$$
which says that $ \MQ$ is a Cartan matrix of type $D_n$.

\sn
\textbf{(Type $A_{2n}^{(2)}$)} \ 
Recall the explicit description of $\sigQ$ for type $A_{2n}^{(2)}$ ($n \ge 1$), which can be obtained from \cite{KKKO15D}. 
In this case, $\gf$ is of type $A_{2n}$ and, for $1 \le i \le 2n$ we have 
 $$
 \phi_Q(\alpha_i) =  (1, (-q)^{2-2i}).
$$
For example, if it is of type $A_4^{(2)}$, then elements of $ \sigQ $ with the values of $ \phi_Q^{-1} $ can be drawn as follows:
$$ 
\scalebox{0.8}{\xymatrix@C=2ex@R=-1ex{ 
 i \backslash k     \ar@{-}[dddddd]<1.5ex> \ar@{-}[rrrrrrrr]<-1.3ex>   & -6 & -5 & -4 & -3 & -2 & -1 & 0 &  & &   \\
1    & \underline{(0001)} & & \underline{(0010)} & & \underline{(0100)} & & \underline{(1000)} & & & \\
2    & & (0011)& & (0110)& & (1100)& &  &  \\
  \ar@{.}[rrrrrrrr]<0ex>     &   &   &   &   &   &   &   &  &  &  \\
2     & & & (0111) & &(1110) & &   & \\
1    & & &  & (1111)& &  && & \\
& 
}}
$$
Here, $(a_1,a_2,a_3,a_4) \seteq \sum_{k=1}^4 a_k\al_k \in \Df$, and the underlined ones are simple roots.
It follows from Proposition \ref{Prop: La and d}, Proposition \ref{Prop: d and d}
and the denominator formula given in Appendix \ref{App: denominators} that 
$ \de (V_Q(\al_i), \D^k V_Q(\al_j) ) = 0 $ for $ i\ne j, k \ne0$ and 
\begin{align*}
\Li(V(\varpi_1), V(\varpi_1)_{(-q)^k}) &= \de ( V(\varpi_1), V(\varpi_1)_{(-q )^k} )  \\ 
& = \delta_{k,2} \qquad \text{ for } k=2, 4, \ldots, 4n-2.
\end{align*}
Therefore, for $i > j$, we have 
 $$
\mQ{i,j} = -  \Li(V(\varpi_1), V(\varpi_1)_{(-q)^{2(i-j)}} ) = - \delta_{i-j, 1},
$$
which tells us that $ \MQ$ is a Cartan matrix of type $A_{2n}$.

\sn
\textbf{(Type $A_{2n-1}^{(2)}$)} \ 
Recall the explicit description of $\sigQ$ for type $A_{2n-1}^{(2)}$ ($n \ge 2$), which can be obtained from \cite{KKKO15D}. 
In this case, $\gf$ is of type $A_{2n-1}$ and, for $1 \le i \le 2n-1$ we have 
 $$
 \phi_Q (\alpha_i) =  (1, (-q)^{2-2i}).
$$
For example, if it is of type $A_5^{(2)}$, then elements of $ \sigQ $ with the values of $ \phi_Q ^{-1}$ can be drawn as follows:
$$ 
\scalebox{0.75}{\xymatrix@C=1.5ex@R=-1ex{ 
 i \backslash k     \ar@{-}[ddddddd]<1.5ex> \ar@{-}[rrrrrrrrrr]<-1.3ex>  &-8 &-7 & -6 & -5 & -4 & -3 & -2 & -1 & 0 &      \\
1   &\underline{(00001)} & & \underline{(00010)} & & \underline{(00100)} & & \underline{(01000)} & & \underline{(10000)} & &  : \text{\large $(-q)^k$} \\
2   && (00011) & & (00110)& & (01100)& & (11000)& &  &  \\
3   &&  & (00011) & & (01110) & &(11100) & &   &   \\
  \ar@{.}[rrrrrrrrrr]<0ex>     &   &   &   &   &   &   &   &  &  &  \\
2   && & & (01111) &  & (11110)& &  &&   & :  \text{\large $- (-q)^k$} \\
1   && & & &  (11111) & & &  && & \\
& 
}}
$$
Here, $(a_1,a_2,a_3,a_4,a_5) \seteq \sum_{k=1}^5 a_k\al_k \in \Df$, and the underlined ones are simple roots.
Note that $V(\varpi_n)_{a} \simeq V(\varpi_n)_{-a}$.
It follows from Proposition \ref{Prop: La and d}, Proposition \ref{Prop: d and d}
and the denominator formula given in Appendix \ref{App: denominators} that 
$ \de (V_Q(\al_i), \D^k V_Q(\al_j) ) = 0 $ for $ i\ne j, k \ne0$ and
\begin{align*}
\Li(V(\varpi_1), V(\varpi_1)_{(-q)^k}) &= \de ( V(\varpi_1), V(\varpi_1)_{(-q )^k} ) \\ 
& = \delta_{k,2} \qquad \text{ for } k=2, 4, \ldots, 4n-4.
\end{align*}
Thus,  we obtain 
 $$
\mQ{i,j} = -  \Li(V(\varpi_1), V(\varpi_1)_{(-q)^{2(i-j)}} ) = - \delta_{i-j, 1}, \quad \text{ for }i > j,
$$
which implies that $ \MQ$ is a Cartan matrix of type $A_{2n-1}$.

\sn
\textbf{(Type $D_{n+1}^{(2)}$)} \ 
Recall the explicit description of $\sigQ$ for type $D_{n+1}^{(2)}$ ($n \ge3$), which can be obtained from \cite{KKKO15D}. 
In this case, $\gf$ is of type $D_{ n+1}$ and, for $1 \le i \le  n+1$ we have 
 $$
 \phi_Q (\alpha_i) = 
\begin{cases}
 (1, (\sqrt{-1})^{n} (-q)^{-2(i-1)}) & \text{ if } i \le n-1, \\
  (  n, (-1)^i (-q)^{ -3n+3 }) & \text{ if } i=n, n+1.
\end{cases}
$$ 
For example, if it is of type $D_5^{(2)}$, then elements of $ \sigQ $ with the values of $ \phi_Q^{-1} $ can be drawn as follows:
$$ 
\scalebox{0.60}{\xymatrix@C=1ex@R=0ex{ 
 i \backslash k     \ar@{-}[ddddddd]<2.5ex> \ar@{-}[rrrrrrrrrrr]<-2ex>   & -9 & -8 & -7 & -6 & -5 & -4 & -3 & -2  & -1 & 0 & \\
1    &   & &  & \drt{1}{1111} &  &  \underline{\drt{0}{0010}}  &  & \underline{\drt{0}{0100}}  & & \underline{\drt{0}{1000}}  & &   : \text{\Large $(-q)^k$} \\
2    & &  & \drt{1}{0111}    & & \drt{1}{1121}  & &  \drt{0}{0110}    &  & \drt{0}{1100}      && &   : \text{\Large $- \sqrt{-1}(-q)^k$}\\
3     & & \drt{1}{0011}    &   & \drt{1}{0121}     &  &  \drt{1}{1221}      &   & \drt{0}{1110}   &&& &   : \text{\Large $- (-q)^k$}   \\
  \ar@{.}[rrrrrrrrrrr]<0ex>     &   &   &   &   &   &   &   &  &  &  &&\\
4    & \underline{\drt{1}{0000}}   & & \drt{0}{0011}    &  & \drt{1}{0110}    &  & \drt{0}{1111}   & &   & & &   : \text{\Large $(-q)^k$} \\
4    & \underline{\drt{0}{0001}}   & & \drt{1}{0010}    &  & \drt{0}{0111}    &  & \drt{1}{1110}   & &  & & &   :  \text{\Large $ - (-q)^k$} \\
& 
}}
$$
Here we set $ {\scriptstyle \drt{a_5}{a_1a_2a_3a_4}} \seteq \sum_{k=1}^5 a_k\al_k \in \Df$ and the underlined ones are simple roots. 
Note that $V(\varpi_i)_{a} \simeq V(\varpi_i)_{-a}$ for $i < n$.
It follows from Proposition \ref{Prop: La and d}, Proposition \ref{Prop: d and d}
and the denominator formula given in Appendix \ref{App: denominators} that 
$ \de (V_Q(\al_i), \D^k V_Q(\al_j) ) = 0 $ for $ i\ne j, k \ne0$ and 
\begin{align*}
\Li(V(\varpi_1), V(\varpi_1)_{(-q)^k}) &= \de ( V(\varpi_1), V(\varpi_1)_{(-q)^k} ) \\
& = \delta_{k,2} \qquad \text{ for } k=2, 4, \ldots, 2n-4, \\
\Li(V(\varpi_n), V(\varpi_1)_{\pm \sqrt{-1}^{n } (-q)^{k}}) &= 
\de ( V(\varpi_{n }), V(\varpi_1)_{\pm \sqrt{-1}^{n } (-q)^{k}} ) \\
&  = \delta_{k,n+1} \qquad \text{ for } k=n+1, n+3, \ldots, 3n-3,\\
\Li(V(\varpi_n), V(\varpi_n)_{-1}) &= 0.
\end{align*}
which give the values of $\mQ{i,j}$. Thus, one can check that the matrix $\MQ$ is a Cartan matrix of type $D_{n+1}$.

\sn
\textbf{(Type $E_6^{(1)}$)} \ 
Recall the explicit description of $\sigQ$ for type $E_6^{(1)}$. The Dynkin quiver corresponding to $\sig_Q$ is given in $\eqref{Eq: DQ}$. In this case, $h=12$ and
elements $(i, (-q)^k)$ of $ \sigQ $ with the values of $ \phi_Q^{-1} $ can be drawn as follows:  
\begin{equation*}
\raisebox{6em}{\scalebox{0.6}{\xymatrix@R=-1.0ex@C=-1.0ex{
i \backslash k   \ar@{-}[ddddddd]<1.3ex> \ar@{-}[rrrrrrrrrrrrrrrr]<-1.3ex>  & -14 & -13 & -12 & -11 & -10 & -9 & -8 & -7 & -6 & -5 & -4 & -3 & -2 & -1 & 0 &   \\
1& \underline{\scriptstyle\prt{000}{001}}  && \underline{\scriptstyle\prt{000}{010}}   && \underline{\scriptstyle\prt{000}{100}}  && {\scriptstyle\prt{011}{111}} 
&&{\scriptstyle\prt{101}{110}}  && {\scriptstyle\prt{010}{100}}  && \underline{\scriptstyle\prt{001}{000}}  && \underline{ {\scriptstyle\prt{100}{000}} } \\
3&& {\scriptstyle\prt{000}{011}}  && {\scriptstyle\prt{000}{110}}  && {\scriptstyle\prt{011}{211}} 
&& {\scriptstyle\prt{112}{221}}  && {\scriptstyle\prt{111}{210}}  && {\scriptstyle\prt{011}{100}} 
&& {\scriptstyle\prt{101}{000}}  \\ 
4&&& {\scriptstyle\prt{000}{111}} && {\scriptstyle\prt{011}{221}} && {\scriptstyle\prt{112}{321}} 
&& {\scriptstyle\prt{122}{321}}  && {\scriptstyle\prt{112}{210}} && {\scriptstyle\prt{111}{100}}  \\
2&&&& {\scriptstyle\prt{010}{111}} && {\scriptstyle\prt{001}{110}} && {\scriptstyle\prt{111}{211}} && {\scriptstyle\prt{011}{110}} 
&& {\scriptstyle\prt{101}{100}} && \underline{\scriptstyle\prt{010}{000}} \\
5&&&& {\scriptstyle\prt{001}{111}} && {\scriptstyle\prt{111}{221}} && {\scriptstyle\prt{011}{210}} 
&& {\scriptstyle\prt{112}{211}} && {\scriptstyle\prt{111}{110}}  \\
6&&&&& {\scriptstyle\prt{101}{111}} && {\scriptstyle\prt{010}{110}} && {\scriptstyle\prt{001}{100}} && {\scriptstyle\prt{111}{111}}  \\
& \\
}}}
\end{equation*}
Here we set $ {\scriptstyle\prt{a_1a_2a_3}{a_4a_5a_6}} \seteq \sum_{i=1}^6 a_i \al_i  \in \Df$, and the underlined ones are simple roots.
Using the formula given in Appendix  \ref{App: ADE}, one can compute that, for $1 \le k < h$,  
\begin{align*}
\tcmc_{1,1}(k) = \delta_{k,1} + \delta_{k,7}, \quad \tcmc_{1,2}(k) = \delta_{k,4} + \delta_{k,8}.
\end{align*} 
By Lemma \ref{Lem: Li for ADE}, we compute the following:
\begin{align*}
 \Li(V(\varpi_1), V(\varpi_1)_{(-q)^{k}} ) &= \delta_{k,2} + \delta_{k,8}  \quad \text{ for $ k=2,4,8,10,12,14 $}, \\
 \Li(V(\varpi_1), V(\varpi_2)_{(-q)^{k}} ) &= \delta_{k,9} \quad \text{ for $ k=  -1, 1,9,11,13$},
\end{align*}
which give the values of $\mQ{i,j}$. Therefore, one can check that the matrix $\MQ$ is a Cartan matrix of type $E_6$.

\sn
\textbf{(Type $E_7^{(1)}$)} \ 
Recall the explicit description of $\sigQ$ for type $E_7^{(1)}$. The Dynkin quiver corresponding to $\sig_Q$ is given in $\eqref{Eq: DQ}$. In this case, $h=18$ and
elements $(i, (-q)^k)$ of $ \sigQ $ with the values of $ \phi_Q^{-1} $ can be drawn as follows:  
\[
\scalebox{0.6}{\xymatrix@R=-0.5ex@C=-3ex{
i \backslash k  \ar@{-}[dddddddd]<0.0ex> \ar@{-}[rrrrrrrrrrrrrrrrrrrrrrr]<-1.5ex>   &-21&-20&-19&-18&-17&-16&-15&-14&-13&-12&-11&-10&-9&-8&-7&-6&-5&-4&-3&-2&-1&0  &\\
1 \quad &&&&&&{\scriptstyle\prt{1011}{111}}  &&{\scriptstyle\prt{0101}{110}}  &&{\scriptstyle\prt{0011}{100}}  &&{\scriptstyle\prt{1112}{111}}  
&&{\scriptstyle\prt{0111}{110}}   &&{\scriptstyle\prt{1011}{100}}  && {\scriptstyle\prt{0101}{000}}  
&&\underline{\scriptstyle\prt{0010}{000}}  &&\underline{\scriptstyle\prt{1000}{000}} \\
3\quad  &&&&&{\scriptstyle\prt{0011}{111}}  &&{\scriptstyle\prt{1112}{221}} &&{\scriptstyle\prt{0112}{210}} 
&&{\scriptstyle\prt{1123}{211}} &&{\scriptstyle\prt{1223}{221}}  
&&{\scriptstyle\prt{1122}{210}} &&{\scriptstyle\prt{1112}{100}} &&{\scriptstyle\prt{0111}{000}} &&{\scriptstyle\prt{1010}{000}} \\
4\quad &&&&{\scriptstyle\prt{0001}{111}} &&{\scriptstyle\prt{0112}{221}} 
&&{\scriptstyle\prt{1123}{321}} &&{\scriptstyle\prt{1224}{321}} &&{\scriptstyle\prt{1234}{321}} 
&&{\scriptstyle\prt{2234}{321}} &&{\scriptstyle\prt{1223}{210}} 
&&{\scriptstyle\prt{1122}{100}} &&{\scriptstyle\prt{1111}{000}} 
\\
2\quad &&&&&{\scriptstyle\prt{0101}{111}} &&{\scriptstyle\prt{0011}{110}} &&{\scriptstyle\prt{1112}{211}} &&{\scriptstyle\prt{0112}{110}} 
&&{\scriptstyle\prt{1122}{211}} &&{\scriptstyle\prt{1112}{110}} &&{\scriptstyle\prt{0111}{100}} &&{\scriptstyle\prt{1011}{000}} 
&&\underline{\scriptstyle\prt{0100}{000}}
\\
5\quad &&&{\scriptstyle\prt{0000}{111}} &&{\scriptstyle\prt{0001}{110}} &&{\scriptstyle\prt{0112}{211}} 
&&{\scriptstyle\prt{1123}{221}} &&{\scriptstyle\prt{1223}{321}} &&{\scriptstyle\prt{1123}{210}} 
&&{\scriptstyle\prt{1223}{221}} &&{\scriptstyle\prt{1122}{110}} &&{\scriptstyle\prt{1111}{100}} 
\\
6\quad &&{\scriptstyle\prt{0000}{011}} &&{\scriptstyle\prt{0000}{110}} &&{\scriptstyle\prt{0001}{100}} 
&&{\scriptstyle\prt{0112}{111}} &&{\scriptstyle\prt{1122}{221}} 
&&{\scriptstyle\prt{1112}{210}} &&{\scriptstyle\prt{0112}{100}} &&{\scriptstyle\prt{1122}{111}} 
&&{\scriptstyle\prt{1111}{110}} 
\\
7\quad & \underline{\scriptstyle\prt{0000}{001}}  &&\underline{\scriptstyle\prt{0000}{010}} &&\underline{\scriptstyle\prt{0000}{100}} &&\underline{\scriptstyle\prt{0001}{000}} 
&&{\scriptstyle\prt{0111}{111}}  &&{\scriptstyle\prt{1011}{110}} &&{\scriptstyle\prt{0101}{100}} 
&&{\scriptstyle\prt{0011}{000}} &&{\scriptstyle\prt{1111}{111}}  \\
&
}}
\]
Here we set $ {\scriptstyle\prt{a_1a_2a_3a_4}{a_5a_6a_7}} \seteq \sum_{i=1}^7 a_i \al_i  \in \Df$, and the underlined ones are simple roots.
Using the formula given in Appendix  \ref{App: ADE}, one can compute that, for $1 \le k < h$,  
\begin{align*}
& \tcmc_{1,1}(k) = \delta_{k,1} + \delta_{k,7} + \delta_{k,11} + \delta_{k,17}, \qquad 
\tcmc_{1,2}(k) = \delta_{k,4} + \delta_{k,8} + \delta_{k,10} + \delta_{k,14}, \\
& \tcmc_{7,1}(k) = \delta_{k,6} + \delta_{k,12}, \quad 
 \tcmc_{7,2}(k) = \delta_{k,5} + \delta_{k,9} + \delta_{k,13}, \quad 
 \tcmc_{7,7}(k) = \delta_{k,1} + \delta_{k,9} + \delta_{k,17}. 
\end{align*} 
By Lemma \ref{Lem: Li for ADE}, we compute the following:
\begin{align*}
 \Li(V(\varpi_1), V(\varpi_1)_{(-q)^{2}} ) &= 1, \quad   \Li(V(\varpi_1), V(\varpi_2)_{(-q)} ) = \Li(V(\varpi_2), V(\varpi_1)_{(-q)} )= 0,  \\
 \Li(V(\varpi_7), V(\varpi_1)_{(-q)^{k}} ) &= \delta_{k,13} \quad \text{ for $ k=13,15,17,19,21$}, \\
 \Li(V(\varpi_7), V(\varpi_{2})_{{(-q)}^k})  &= \delta_{k,14}  \quad \text{ for $ k=14,16,18,20$}, \\ 
 \Li(V(\varpi_7), V(\varpi_{7})_{{(-q)}^k}  )  &= \delta_{k,2}  \quad \text{ for $ k=2,4,6$},
\end{align*}
which give the values of $\mQ{i,j}$. Therefore, one can check that the matrix $\MQ$ is a Cartan matrix of type $E_7$.

\sn
\textbf{(Type $E_8^{(1)}$)} \ 
Recall the explicit description of $\sigQ$ for type $E_8^{(1)}$. The Dynkin quiver corresponding to $\sig_Q$ is given in $\eqref{Eq: DQ}$. In this case, $h=30$ and
elements $(i, (-q)^k)$ of $ \sigQ $ with the values of $ \phi_Q^{-1} $ can be drawn as follows:  
\[
\scalebox{0.50}{\xymatrix@R=-1ex@C=-3ex{
 i \backslash k   \ar@{-}[ddddddddd]<0.0ex> \ar@{-}[rrrrrrrrrrrrrrrrrrrrrrrrrrrrrrrrrrrr]<-1.3ex>
& -34 & -33 & -32&-31&-30&-29&-28&-27&-26&-25&-24&-23&-22&-21&-20&-19&-18&-17&-16&-15&-14&-13&-12&-11&-10&-9&-8&-7&-6&-5&-4&-3&-2&-1&0&   \\
1 \quad  &&&&&&&\pprt{10}{11}{11}{11} &&\pprt{01}{01}{11}{10} &&\pprt{00}{11}{11}{00} &&\pprt{11}{12}{21}{11} &&\pprt{01}{12}{11}{10} 
&&\pprt{11}{22}{22}{11} &&\pprt{11}{12}{21}{10} &&\pprt{01}{12}{11}{00} &&\pprt{11}{22}{21}{11} &&\pprt{11}{12}{11}{10} 
&&\pprt{01}{11}{11}{00} &&\pprt{10}{11}{10}{00} &&\pprt{01}{01}{00}{00} && \underline{\pprt{00}{10}{00}{00}} && \underline{\pprt{10}{00}{00}{00}}
\\
3\quad &&&&&&\pprt{00}{11}{11}{11} &&\pprt{11}{12}{22}{21} &&\pprt{01}{12}{22}{10} &&\pprt{11}{23}{32}{11} 
&&\pprt{12}{24}{32}{21} &&\pprt{12}{34}{33}{21} &&\pprt{22}{34}{43}{21} &&\pprt{12}{24}{32}{10} 
&&\pprt{12}{34}{32}{11} &&\pprt{22}{34}{32}{21} &&\pprt{12}{23}{22}{10} &&\pprt{11}{22}{21}{00} 
&&\pprt{11}{12}{10}{00} &&\pprt{01}{11}{00}{00} &&\pprt{10}{10}{00}{00} 
\\
4\quad &&&&&\pprt{00}{01}{11}{11} &&\pprt{01}{12}{22}{21} &&\pprt{11}{23}{33}{21} &&
\pprt{12}{24}{43}{21} &&\pprt{12}{35}{43}{21} &&\pprt{23}{46}{54}{32} 
&&\pprt{23}{46}{54}{31} &&\pprt{23}{46}{54}{21} &&\pprt{23}{46}{53}{21} 
&&\pprt{23}{46}{43}{21} &&\pprt{23}{45}{43}{21} &&\pprt{22}{34}{32}{10} 
&&\pprt{12}{23}{21}{00} &&\pprt{11}{22}{10}{00} &&\pprt{11}{11}{00}{00} 
\\
2\quad &&&&&&\pprt{01}{01}{11}{11} &&\pprt{00}{11}{11}{10} &&\pprt{11}{12}{22}{11} &&\pprt{01}{12}{21}{10} &&\pprt{11}{23}{22}{11} 
&&\pprt{12}{23}{32}{21} &&\pprt{11}{23}{32}{10} &&\pprt{12}{23}{32}{11} &&\pprt{11}{23}{21}{10} &&\pprt{12}{23}{22}{11} 
&&\pprt{11}{22}{21}{10} &&\pprt{11}{12}{11}{00} &&\pprt{01}{11}{10}{00} &&\pprt{10}{11}{00}{00} && \underline{\pprt{01}{00}{00}{00}}
\\
5\quad &&&&\pprt{00}{00}{11}{11} &&\pprt{00}{01}{11}{10} &&\pprt{01}{12}{22}{11} &&\pprt{11}{23}{32}{21} 
&&\pprt{12}{24}{33}{21} &&\pprt{12}{34}{43}{21} &&\pprt{22}{35}{43}{21} &&\pprt{13}{35}{43}{21} 
&&\pprt{22}{45}{43}{21} &&\pprt{23}{35}{43}{21} &&\pprt{12}{34}{32}{10} &&\pprt{22}{34}{32}{11} 
&&\pprt{12}{23}{21}{10} &&\pprt{11}{22}{11}{00} &&\pprt{11}{11}{10}{00} 
\\
6\quad &&&\pprt{00}{00}{01}{11} &&\pprt{00}{00}{11}{10} &&\pprt{00}{01}{11}{00} &&\pprt{01}{12}{21}{11} &&\pprt{11}{23}{22}{21} 
&&\pprt{12}{23}{33}{21} &&\pprt{11}{23}{32}{10} &&\pprt{12}{24}{32}{11} &&\pprt{12}{34}{32}{21} 
&&\pprt{22}{34}{33}{21} &&\pprt{12}{23}{32}{10} &&\pprt{11}{23}{21}{00} &&\pprt{12}{23}{21}{11} 
&&\pprt{11}{22}{11}{10} &&\pprt{11}{11}{11}{00} 
\\
7\quad &&\pprt{00}{00}{00}{11} &&\pprt{00}{00}{01}{10} &&\pprt{00}{00}{11}{00} &&\pprt{00}{01}{10}{00} 
&&\pprt{01}{12}{11}{11} &&\pprt{11}{22}{22}{21} &&\pprt{11}{12}{22}{10} 
&&\pprt{01}{12}{21}{00} &&\pprt{11}{23}{21}{11} &&\pprt{12}{23}{22}{21} &&\pprt{11}{22}{22}{10} 
&&\pprt{11}{12}{21}{00} &&\pprt{01}{12}{10}{00} &&\pprt{11}{22}{11}{11}  &&\pprt{11}{11}{11}{10}
\\
8\quad &\underline{\pprt{00}{00}{00}{01}} && \underline{\pprt{00}{00}{00}{10}} && \underline{\pprt{00}{00}{01}{00}} && \underline{\pprt{00}{00}{10}{00}} 
&& \underline{\pprt{00}{01}{00}{00}} &&\pprt{01}{11}{11}{11} &&\pprt{10}{11}{11}{10} &&\pprt{01}{01}{11}{00} 
&&\pprt{00}{11}{10}{00} &&\pprt{11}{12}{11}{11} &&\pprt{01}{11}{11}{10} 
&&\pprt{10}{11}{11}{00} &&\pprt{01}{01}{10}{00} &&\pprt{00}{11}{00}{00} &&\pprt{11}{11}{11}{11}  \\
& 
}}
\]
Here we set $ {\scriptstyle\pprt{a_1a_2}{a_3a_4}{a_5a_6}{a_7a_8}} \seteq \sum_{i=1}^8 a_i \al_i  \in \Df$, and the underlined ones are simple roots.
Using the formula given in Appendix  \ref{App: ADE}, one can compute that, for $1 \le k < h$,  
\begin{align*}
 \tcmc_{1,1}(k) & = \delta(k=1,7,11,13,17,19,23,29), \\ 
\tcmc_{1,2}(k) & = \delta(k=4,8,10,12,14,16,18,20,22,26), \\
\tcmc_{8,1}(k) &= \delta(k=7,13,17,23), \\ 
 \tcmc_{8,2}(k) &= \delta(k=6,10,14,16,20,24), \\ 
 \tcmc_{8,8}(k) &= \delta(k=1,11,19,29). 
\end{align*} 
By Lemma \ref{Lem: Li for ADE}, we compute the following:
\begin{align*}
 \Li(V(\varpi_1), V(\varpi_1)_{(-q)^{2}} ) &= 1, \quad   \Li(V(\varpi_1), V(\varpi_2)_{(-q)} ) = \Li(V(\varpi_2), V(\varpi_1)_{(-q)} )= 0,  \\
 \Li(V(\varpi_8), V(\varpi_1)_{(-q)^{k}} ) &= \delta_{k,24} \quad \text{ for $ k=24,26,28,30,32,34$}, \\
 \Li(V(\varpi_8), V(\varpi_{2})_{(-q)^k} )  &= \delta_{k,25}  \quad \text{ for $ k=25,27,29,31,33$}, \\ 
 \Li(V(\varpi_8), V(\varpi_{8})_{(-q)^k} ) &= \delta_{k,2}  \quad \text{ for $ k=2,4,6,8$},
\end{align*}
which give the values of $\mQ{i,j}$. Therefore, one can check that the matrix $\MQ$ is a Cartan matrix of type $E_8$.

\sn
\textbf{(Type $F_4^{(1)}$)} \ 
Recall the explicit description of $\sigQ$ for type $F_4^{(1)}$, which can be obtained from \cite{OS19}. In this case, $\gf$ is of type $E_{6}$ and, elements of $ \sigQ $ with the values of $ \phi_Q^{-1} $ can be drawn as follows:  
\begin{align*} 
 \raisebox{5.6em}{\scalebox{0.60}{\xymatrix@C=-1.3ex@R=0.0ex{
 i \backslash k    \ar@{-}[ddddd]<1.3ex> \ar@{-}[rrrrrrrrrrrrrrrrrrrrr]<-3ex> & -  {19}   & -18 & - {17}  & -16 & -  {15}  & -14 & - {13}  & -12 & - {11}  & -10 & - {9}  & -8 & - {7}  & -6 & - {5}  & -4 & - {3}  & -2 & - {1}  & 0  &  \\
1 &&&& {\scriptstyle\prt{000}{111}}  &&{\scriptstyle\prt{111}{210}}&& {\scriptstyle\prt{011}{110}} && {\scriptstyle\prt{001}{111}}
&& {\scriptstyle\prt{111}{211}}
&& {\scriptstyle\prt{111}{110}}
&& \underline{\scriptstyle{\prt{001}{000}}}   &&
\underline{\scriptstyle{\prt{000}{001}}} && \underline{\scriptstyle{\prt{100}{000}}} &  & \quad : \text{\Large $(-1)^i q_s^k$} \\
2 && {\scriptstyle\prt{000}{110}}  &&  {\scriptstyle\prt{011}{210}}   && {\scriptstyle\prt{011}{221}} && {\scriptstyle\prt{112}{321}}
&&{\scriptstyle\prt{122}{321}}
&& {\scriptstyle\prt{112}{221}}
&&{\scriptstyle\prt{112}{211}}  &&
{\scriptstyle\prt{111}{111}}  && {\scriptstyle\prt{101}{000}}\\
3 & \underline{\scriptstyle{\prt{000}{100}}}   && {\scriptstyle\prt{010}{110}}  && {\scriptstyle\prt{001}{110}}
&& {\scriptstyle\prt{011}{211}}  && {\scriptstyle\prt{111}{221}} &&{\scriptstyle\prt{112}{210}}
&&{\scriptstyle\prt{011}{111}} &&
{\scriptstyle\prt{101}{111}} && {\scriptstyle\prt{111}{100}}\\
4 && {\scriptstyle\prt{010}{100}} && \underline{\scriptstyle{\prt{000}{010}}}  && {\scriptstyle\prt{001}{100}}  &&
{\scriptstyle\prt{010}{111}} &&{\scriptstyle\prt{101}{110}} && {\scriptstyle\prt{011}{100}} && {\scriptstyle\prt{000}{011}}
 &&{\scriptstyle\prt{101}{100}}  &&\underline{\scriptstyle{\prt{010}{000}}}\\
&
}}}
\end{align*}
Here we set $ {\scriptstyle\prt{a_1a_2a_3}{a_4a_5a_6}} \seteq \sum_{i=1}^6 a_i \al_i  \in \Df$, and the underlined ones are simple roots.
It follows from Proposition \ref{Prop: La and d}, Proposition \ref{Prop: d and d}
and the denominator formula given in Appendix \ref{App: denominators} that 
 $ \de (V_Q(\al_i), \D^k V_Q(\al_j) ) = 0 $ for $ i\ne j, k \ne0$ and
\begin{align*}
\Li(V(\varpi_1), V(\varpi_1)_{ q_s ^k}) &= \de ( V(\varpi_1), V(\varpi_1)_{ q_s^k} ) = \delta_{k,4} \qquad \text{ for } k=2,4, \\
\Li(V(\varpi_3), V(\varpi_1)_{ q_s ^k}) &= \de ( V(\varpi_3), V(\varpi_1)_{ q_s^k} ) = \delta_{k, {15} } \qquad \text{ for } k= {15}, {17},  {19}, \\
\Li(V(\varpi_4), V(\varpi_1)_{ -q_s ^k}) &= \de ( V(\varpi_4), V(\varpi_1)_{ - q_s^k} ) \\
& = \delta_{k,14} \qquad \qquad \qquad \text{ for } k=-2,0,2,12,14,16, \\
\Li(V(\varpi_3), V(\varpi_4)_{ - q_s^k}) &= \de ( V(\varpi_3), V(\varpi_4)_{- q_s^k} ) = 1 \qquad \text{ for } k= {3}, {17},  \\
\Li(V(\varpi_4), V(\varpi_4)_{  q_s^{14}}) & = \de ( V(\varpi_4), V(\varpi_4)_{ q_s^{14}} ) = 0, 
\end{align*}
which give the values of $\mQ{i,j}$. Thus, one can check that the matrix $\MQ$ is a Cartan matrix of type $E_6$.

\sn
\textbf{(Type $G_2^{(1)}$)} \ 
Recall the explicit description of $\sigQ$ for type $G_2^{(1)}$, which can be obtained from \cite{OS19}. In this case, $\gf$ is of type $D_{4}$ and, elements of $ \sigQ $ with the values of $ \phi_Q^{-1} $ can be drawn as follows:  
\begin{align*} 
& \scalebox{0.65}{\xymatrix@C=-1ex@R=0.3ex{ 
 i \backslash k     \ar@{-}[ddd]<1.5ex> \ar@{-}[rrrrrrrrrrrrr]<-1.7ex>   & -11 & -10 & -9 & -8 & -7 & -6 & -5 & -4 &-3  &-2 & -1&0  &\\
1  &  \underline{\ddrt{1}{000}}  & &\ddrt{1}{011} &  &  \ddrt{1}{111} &  &  \ddrt{1}{121}&  & \ddrt{0}{111} &  & \ddrt{0}{110} &  & &:  \text{ \large $ (-q_t)^k$} \\
2   &   &  \ddrt{1}{010} & &  \underline{\ddrt{0}{001}} & &  \ddrt{1}{110} & &  \ddrt{0}{011} & & \underline{\ddrt{0}{100}} & &  \underline{\ddrt{0}{010}}   \\
& 
}} 
\end{align*}
Here we set $ { \scriptstyle { \left( \begin{matrix} \ \ \ \   a_4 \\ a_1 a_2 a_3 \end{matrix}\right) } }   \seteq \sum_{k=1}^4 a_k\al_k \in \Df$ and the underlined ones are simple roots. 
It follows from Proposition \ref{Prop: La and d}, Proposition \ref{Prop: d and d}
and the denominator formula given in Appendix \ref{App: denominators} that 
$ \de (V_Q(\al_i), \D^k V_Q(\al_j) ) = 0 $ for $ i\ne j, k \ne0$ and 
\begin{align*}
\Li(V(\varpi_1), V(\varpi_2)_{ (-q_t)^k}) &= \de ( V(\varpi_1), V(\varpi_2)_{ (-q_t)^k} ) = \delta_{k,11} \qquad \text{ for } k=3,9,11, \\
\Li(V(\varpi_2), V(\varpi_2)_{ (-q_t)^k}) &= \de ( V(\varpi_2), V(\varpi_2)_{ (-q_t)^k} ) = \delta_{k,2} + \delta_{k,8} \qquad \text{ for } k=2,6,8, 
\end{align*}
which give the values of $\mQ{i,j}$. Thus, one can check that the matrix $\MQ$ is a Cartan matrix of type $D_4$.

\sn
\textbf{(Type $E_6^{(2)}$)} \ 
Recall the explicit description of $\sigQ$ for type $E_6^{(2)}$, which can be obtained from \cite{OS19}. In this case, $\gf$ is of type $E_{6}$ and, elements of $ \sigQ $ with the values of $ \phi_Q^{-1} $ can be drawn as follows:  
\begin{equation*}
\raisebox{6em}{\scalebox{0.55}{\xymatrix@R=-0.5ex@C=1.0ex{
i \backslash k   \ar@{-}[ddddddddd]<3ex> \ar@{-}[rrrrrrrrrrrrrrrr]<-1.5ex>  & -14 & -13 & -12 & -11 & -10 & -9 & -8 & -7 & -6 & -5 & -4 & -3 & -2 & -1 & 0 &   \\
1& \underline{\scriptstyle\prt{000}{001}}  && \underline{\scriptstyle\prt{000}{010}}   && \underline{\scriptstyle\prt{000}{100}}  && {\scriptstyle\prt{011}{111}} 
&&{\scriptstyle\prt{101}{110}}  && {\scriptstyle\prt{010}{100}}  && \underline{\scriptstyle\prt{001}{000}}  && \underline{ {\scriptstyle\prt{100}{000}} } & & : \text{ \Large${ (- q)}^k$} \\
2&& {\scriptstyle\prt{000}{011}}  && {\scriptstyle\prt{000}{110}}  && {\scriptstyle\prt{011}{211}} 
&& {\scriptstyle\prt{112}{221}}  && {\scriptstyle\prt{111}{210}}  && {\scriptstyle\prt{011}{100}} 
&& {\scriptstyle\prt{101}{000}}  \\ 
  \ar@{.}[rrrrrrrrrrrrrrrr]<0ex>     &&   &   &   &   &   &   &   &  &  &&&&  &&&&\\
3&&& {\scriptstyle\prt{000}{111}} && {\scriptstyle\prt{011}{221}} && {\scriptstyle\prt{112}{321}} 
&& {\scriptstyle\prt{122}{321}}  && {\scriptstyle\prt{112}{210}} && {\scriptstyle\prt{111}{100}}  & &&& : \text{\Large $\sqrt{-1}(-q)^k$}  \\
4&&&& {\scriptstyle\prt{010}{111}} && {\scriptstyle\prt{001}{110}} && {\scriptstyle\prt{111}{211}} && {\scriptstyle\prt{011}{110}} 
&& {\scriptstyle\prt{101}{100}} && \underline{\scriptstyle\prt{010}{000}}  & &&   \\
  \ar@{.}[rrrrrrrrrrrrrrrr]<0ex>  &&      &   &   &   &   &   &   &  &  &  &&&&&&&  \\
2&&&& {\scriptstyle\prt{001}{111}} && {\scriptstyle\prt{111}{221}} && {\scriptstyle\prt{011}{210}} 
&& {\scriptstyle\prt{112}{211}} && {\scriptstyle\prt{111}{110}}   & &&&& : \text{\Large $- (-q)^k$} \\
1&&&&& {\scriptstyle\prt{101}{111}} && {\scriptstyle\prt{010}{110}} && {\scriptstyle\prt{001}{100}} && {\scriptstyle\prt{111}{111}}   \\
& 
}}}
\end{equation*}
Here we set $ {\scriptstyle\prt{a_1a_2a_3}{a_4a_5a_6}} \seteq \sum_{i=1}^6 a_i \al_i  \in \Df$, and the underlined ones are simple roots.
Note that $V(\varpi_i)_{a} \simeq V(\varpi_i)_{-a}$ for $i =3,4$.
It follows from Proposition \ref{Prop: La and d}, Proposition \ref{Prop: d and d}
and the denominator formula given in Appendix \ref{App: denominators} that 
 $ \de (V_Q(\al_i), \D^k V_Q(\al_j) ) = 0 $ for $ i\ne j, k \ne0$ and 
\begin{align*}
\Li(V(\varpi_1), V(\varpi_1)_{ q ^k}) &= \de ( V(\varpi_1), V(\varpi_1)_{ q^k} ) = \delta_{k,2} + \delta_{k,8} \qquad \text{ for } k=2,4,8,10,12,14, \\
\Li(V(\varpi_1), V(\varpi_4)_{ \sqrt{-1}q ^k}) &= \de ( V(\varpi_1), V(\varpi_4)_{ \sqrt{-1} q^k} )  \\
& = \delta_{k,9} \qquad \qquad \qquad \text{ for } k=-1,0,1,9,11,13.
\end{align*}
which give the values of $\mQ{i,j}$. Thus, one can check that the matrix $\MQ$ is a Cartan matrix of type $E_6$.

\sn
\textbf{(Type $D_4^{(3)}$)} \ 
Recall the explicit description of $\sigQ$ for type $D_4^{(3)}$, which can be obtained from \cite{OS19}. In this case, $\gf$ is of type $D_{4}$ and, elements of $ \sigQ $ with the values of $ \phi_Q^{-1} $ can be drawn as follows:  
\begin{align*} 
& \scalebox{0.65}{\xymatrix@C=-1ex@R=-0.5ex{ 
 i \backslash k     \ar@{-}[ddddddd]<1.5ex> \ar@{-}[rrrrrrrr]<-1.4ex>   & -6 & -5 & -4 &-3  &-2 & -1&0  &\\
1  &       &  &  \ddrt{1}{111} &  & \underline{\ddrt{0}{010}}   &  & \underline{\ddrt{0}{100}}  &  &:  \text{ \large $ q^k$} \\
  \ar@{.}[rrrrrrrr]<0ex>  &&      &   &   &   &   &   &   &  &  &  &&&  \\
2   &   &      \ddrt{1}{011}  & &  \ddrt{1}{121} & &  \ddrt{0}{110} & & &  :  \text{ \large $ -q^k$} \\
  \ar@{.}[rrrrrrrr]<0ex>  &&      &   &   &   &   &   &   &  &  &  &&&  \\
1   & \underline{\ddrt{0}{001}}  &   & \ddrt{1}{010} &&  {\ddrt{0}{111}}   &   & &   &  :  \text{ \large $ \omega q^k$}  \\
1   &  \underline{\ddrt{1}{000}} &   &\ddrt{0}{011}  &&  {\ddrt{1}{110}}  &  & &   &   :  \text{ \large $ \omega^2 q^k$} \\
& 
}} 
\end{align*}
Here we set $ { \scriptstyle { \left( \begin{matrix} \ \ \ \   a_4 \\ a_1 a_2 a_3 \end{matrix}\right) } }   \seteq \sum_{k=1}^4 a_k\al_k \in \Df$ and the underlined ones are simple roots. 
Note that $V(\varpi_2)_{a} \simeq V(\varpi_2)_{\omega^t a}$ for $t=1,2$.
It follows from Proposition \ref{Prop: La and d}, Proposition \ref{Prop: d and d}
and the denominator formula given in Appendix \ref{App: denominators} that 
 $ \de (V_Q(\al_i), \D^k V_Q(\al_j) ) = 0 $ for $ i\ne j, k \ne0$ and 
\begin{align*}
\Li(V(\varpi_1), V(\varpi_1)_{ \omega^t q^k}) =
\begin{cases}
1 & \text{ if } (t,k)= (0,2), (1,4), (2,4), \\
0 & \text{ if } (t,k)= (1,0), (2,0), (1,6), (2,6),
\end{cases} 
\end{align*}
which give the values of $\mQ{i,j}$. Thus, one can check that the matrix $\MQ$ is a Cartan matrix of type $D_4$.

\vskip 2em

\appendix

\section{Denominator formulas} \label{App: denominators}

The denominator formulas were studied and computed in \cite{AK, DO94, Fu19, KKK15B, Oh15, OS19}. 
In this Appendix, we write the denominator formulas for all types.

Let $q_s, q_t \in \cor^\times$ such that $q = q_s^2= q_t^3$, and $\omega \in \cor$ such that $\omega^2+\omega+1=0$.
For $i,j \in I$, we set 
$$
d_{i,j}(z) \seteq d_{V(\varpi_i), V(\varpi_j)} (z).
$$

\subsection{Simply-laced affine ADE types} \label{App: ADE}

Suppose that the Cartan matrix 
$\cmC = (c_{i,j})_{i,j\in I_0}$ is of type $A_n $, $D_n $ 
or $E_k $ ($k=6,7,8$). 
The quantum Cartan matrix $\cmC(z)= (c_{i,j}(z))_{i,j\in I_0}$ is defined by 
$$
c_{i,j}(z) \seteq \delta(i=j) (z + z^{-1}) + \delta(i\ne j) c_{i,j}.
$$
We denote by $\tcmC (z) = ( \tcmc_{i,j}(z) )_{i,j\in I_0}$ the inverse of $\cmC(z)$, and write 
\begin{align} \label{Eq: def of tcmt}
\tcmc_{i,j}(z) = \sum_{k\in \Z_{\ge0}} \tcmc_{i,j}(k) z^k \qquad \text{ for } i,j \in I_0.
\end{align}
Then the following beautiful formula is given in \cite[Theorem 2.10]{Fu19}
\begin{align} \label{Eq: denominators for ADE}
d_{i,j}(z) = \prod_{k=1}^{h -1}  (z - (-q)^{k+1})^{\tcmc_{i,j}(k)},
\end{align}
where $h $ is the Coxeter number. Note that the dual Coxeter number is equal to the Coxeter number in this case.

Let $\g_0$ be a simple Lie algebra of type $ADE$ with a index set $I_0$, and $Q$ be a Dynkin quiver of $\g$. 
Let $\xi\col I_0 \rightarrow \Z$ be a height function such that $ \xi_j = \xi_i - 1 $ for $i \rightarrow j$ in $Q$.
Choose a total order $>$ on $I$ such that $i > j$ for $\xi_i > \xi_j$ and write $I_0 = \{  i_1 > i_2 > \cdots > i_n  \}$.
We set $\tau \seteq s_{i_1} \cdots s_{i_n}$, which is a Coxeter element.
For $i\in I_0$, we set 
$
\gamma_i \seteq \sum_{j\in B(i)} \al_j,
$
where $B(i)$ is the subset of $I_0$ consisting of all elements $j$ such that there is a path from $j$ to $i$ in $Q$.
Then we have the following.
\begin{proposition} [\protect{\cite[Proposition 2.1]{HL15}}] \label{Prop: formula for ADE}
For $i,j\in I$ and $ k \in \Z_{>0}$, we have 
\begin{align*}
\tcmc_{i,j}(k) = 
\begin{cases}
( \tau^{(k +\xi_i - \xi_j -1)/2} (\gamma_i), \varpi_j) & \text{ if $k +\xi_i - \xi_j -1$ is even}, \\
0 & \text{ otherwise.}
\end{cases}
\end{align*}
\end{proposition}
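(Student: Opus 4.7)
The strategy is to verify the proposed expression against the recursion that uniquely characterizes the power series $\tcmc_{i,j}(z)$. Writing out $\cmC(z)\tcmC(z)=I$ coefficient-wise gives the initial data $\tcmc_{i,j}(0)=0$, $\tcmc_{i,j}(1)=\delta_{i,j}$, together with the three-term recurrence
\[
\tcmc_{i,j}(k+1)+\tcmc_{i,j}(k-1)=\sum_{l\sim i}\tcmc_{l,j}(k)\qquad(k\ge 1),
\]
where $l\sim i$ denotes adjacency in the Dynkin diagram (so $c_{i,l}=-1$). These three data pin down the sequence $\{\tcmc_{i,j}(k)\}_{k\ge0}$ uniquely, so it is enough to show that the right-hand side of the claim satisfies them.

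For the initial conditions, the parity vanishing handles the cases of opposite parity automatically, while in the matched-parity case one reduces to $(\gamma_i,\varpi_j)=\delta_{j\in B(i)}$ together with a classical fact about orbits of postprojective roots: the coefficient of $\alpha_j$ in $\tau^{m}\gamma_i$ vanishes under the parity and magnitude constraints forced by $k=0,1$. For the recurrence I would split into cases according to the parity of $k+\xi_i-\xi_j-1$. If it is even, the rule $\xi_l-\xi_i=\pm 1$ for $l\sim i$ makes every term on both sides vanish, and there is nothing to verify. If it is odd, writing $k+\xi_i-\xi_j=2M$, using $\tau$-orthogonality of the inner product to cancel a common power of $\tau$, and using that $\{\varpi_j\}_{j\in I_0}$ separates points of the root lattice, the recurrence collapses to the purely Coxeter-theoretic identity
\[
(1+\tau)\gamma_i=\sum_{l\in B^+(i)}\tau\gamma_l+\sum_{l\in B^-(i)}\gamma_l,
\]
where $B^\pm(i)=\{l\sim i:\xi_l=\xi_i\pm 1\}$ is the partition of the neighbours of $i$ in $Q$.

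The heart of the argument is this last identity, and it will be the main obstacle. I would begin from the observation that the Dynkin diagram is a tree, whence the subsets $B(l)$ for $l\in B^+(i)$ are pairwise disjoint and together with $\{i\}$ exhaust $B(i)$; this yields the additive recursion $\gamma_i=\alpha_i+\sum_{l\in B^+(i)}\gamma_l$. Substituting this into both sides of the desired identity and cancelling reduces it to the single-simple-root statement
\[
\tau\alpha_i=-\alpha_i+\sum_{l\in B^-(i)}\gamma_l-\sum_{l\in B^+(i)}\gamma_l.
\]
I would verify this by unwinding $\tau=s_{i_1}\cdots s_{i_n}$ with vertices listed in decreasing $\xi$: the reflections $s_{i_k}$ with $\xi_{i_k}<\xi_i$ applied first build up $\sum_{l\in B^-(i)}\gamma_l$ through an induction along successor chains emanating from $i$, then $s_i$ flips the sign of $\alpha_i$, and the subsequent reflections $s_{i_k}$ with $\xi_{i_k}>\xi_i$ insert the correction $-\sum_{l\in B^+(i)}\gamma_l$ via the analogous induction along predecessor chains. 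Equivalently, and perhaps more conceptually, this identity is the dimension-vector translation of the almost-split sequence ending at the indecomposable projective $P_i$ in the path algebra of $Q$, with $\tau$ acting as the negative of the Auslander-Reiten translate on the postprojective component and $\gamma_i=[P_i]$. The combinatorial bookkeeping in either approach is where the real work lies.
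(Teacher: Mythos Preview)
The paper does not prove this proposition: it is quoted as \cite[Proposition 2.1]{HL15} and used without argument. So there is no ``paper's own proof'' to compare against, and your proposal is effectively an attempt at a self-contained proof of the cited result.

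Your overall strategy---extract the three-term recurrence and the initial data from $\cmC(z)\tcmC(z)=I$, then verify that the right-hand side satisfies them---is sound, and your reduction of the recurrence to the Coxeter identity
\[
(1+\tau)\gamma_i=\sum_{l\in B^+(i)}\tau\gamma_l+\sum_{l\in B^-(i)}\gamma_l
\]
is correct (I checked it in small rank). The further rewriting as $\tau\alpha_i=-\alpha_i+\sum_{l\in B^-(i)}\gamma_l-\sum_{l\in B^+(i)}\gamma_l$ via $\gamma_i=\alpha_i+\sum_{l\in B^+(i)}\gamma_l$ is also fine, and both of your suggested verifications (direct unwinding of $\tau=s_{i_1}\cdots s_{i_n}$, or the almost-split-sequence interpretation with $\gamma_i=\underline{\dim}\,P_i$) work.

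The genuine gap is in the initial conditions. Your treatment there is too casual. The case $k=0$ (needed to launch the recursion even though the statement is for $k>0$) requires that, whenever $\xi_i-\xi_j$ is odd, $(\tau^{(\xi_i-\xi_j-1)/2}\gamma_i,\varpi_j)=0$; and $k=1$ requires that, whenever $\xi_i-\xi_j$ is even with $i\ne j$, $(\tau^{(\xi_i-\xi_j)/2}\gamma_i,\varpi_j)=0$. Calling this ``a classical fact about orbits of postprojective roots'' is not a proof: it amounts to the statement that the indecomposable at position $(i,\xi_i-2m)$ in the Auslander--Reiten quiver has no $\alpha_j$-component for the specific $m$ forced by $k\in\{0,1\}$, and that is precisely what you would be trying to establish in general. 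You should give an actual argument---for instance, induct on $|\xi_i-\xi_j|$ using your Coxeter identity itself, or invoke the standard description of $\tau^m\gamma_i$ as the $(\xi_i-2m)$-slice of the AR quiver and the convexity of its support. Without this, the base of your induction is unjustified.
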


In this paper, we take the following choice of Dynkin quivers:
\begin{equation} \label{Eq: DQ}
\begin{aligned} 
& A_{n} :   
\xymatrix@C=4ex@R=3ex{
  *{\circ}<3pt> \ar[r]_<{1  } & *{ \circ }<3pt> \ar[r]_<{2}   & \cdots \ar[r]_<{ }   &*{\circ}<3pt> \ar[r]_<{n-1} &*{ \circ }<3pt>  \ar@{-}[l]^<{n}}, \quad 
D_{n} :   
\raisebox{2.3em}{\xymatrix@C=4ex@R=3ex{
 &&& *{ \circ }<3pt> & \\  
  *{\circ}<3pt> \ar[r]_<{1  } & *{ \circ }<3pt> \ar[r]_<{2}   & \cdots \ar[r]_<{ }   &*{\circ}<3pt> \ar[r]_<{n-2} \ar[u]_>{n} &*{ \circ }<3pt>  \ar@{-}[l]^<{n-1}   }},
\\
&
 E_6 :  \raisebox{2.3em}{\xymatrix@C=3.4ex@R=3ex{  && *{\circ}<3pt>\ar[d]^<{2} \\
*{ \circ }<3pt> \ar[r]_<{1}  &*{\circ}<3pt>
\ar[r]_<{3} &*{ \circ }<3pt> \ar[r]_<{4} &*{\circ}<3pt>
\ar[r]_<{5} &*{\circ}<3pt>
\ar@{-}[l]^<{\ \ 6}}}, \qquad \quad 
E_7 : \raisebox{2.3em}{\xymatrix@C=3.4ex@R=3ex{ &&  *{\circ}<3pt>\ar[d]^<{2} \\
 *{ \circ }<3pt> \ar[r]_<{1}  &*{\circ}<3pt>
\ar[r]_<{3} &*{ \circ }<3pt> \ar[r]_<{4} &*{\circ}<3pt>
\ar[r]_<{5} &*{\circ}<3pt>
\ar[r]_<{6} &*{\circ}<3pt>
\ar@{-}[l]^<{7} } }, \allowdisplaybreaks \\
& E_8  :  \raisebox{2.3em}{\xymatrix@C=3.4ex@R=3ex{ && *{\circ}<3pt>\ar[d]^<{2} \\
*{ \circ }<3pt> \ar[r]_<{1}  &*{\circ}<3pt>
\ar[r]_<{3} &*{ \circ }<3pt> \ar[r]_<{4} &*{\circ}<3pt>
\ar[r]_<{5} &*{\circ}<3pt>
\ar[r]_<{6} &*{\circ}<3pt>
\ar[r]_<{7} &*{\circ}<3pt>
\ar@{-}[l]^<{8} } }.
\end{aligned}
\end{equation}
In this case we have the following data, which allow us to compute $\tcmc_{i,j}(k)$ explicitly.
\bna
\item \textbf{(Type $A_n$)} \ 
 $\tau = s_1s_2 \cdots s_n$, $\xi_i =1-i$ and $\gamma_i = \sum_{j=1}^i \alpha_j$.
\item \textbf{(Type $D_n$)} \ 
 $\tau = s_1s_2 \cdots s_{n-1} s_n$ and 
\begin{align*}
\xi_i = 
\begin{cases}
1-i & \text{ if } i < n-1, \\
-n+2 & \text{ if } i=n-1,n,
\end{cases} \quad
\gamma_i = 
\begin{cases}
\sum_{j=1}^i \alpha_j & \text{ if } i < n, \\
\sum_{j=1}^{n-2} \alpha_j + \al_n& \text{ if } i=n.
\end{cases}
\end{align*}

\item \textbf{(Type $E_n$) ($n=6,7,8$)} \ 
  $\tau = s_1s_2\ldots s_n$,  $\xi_1= 0$, $\xi_2=-1$, $\xi_k =2-k$ ($k=3,4,\ldots,n$), and 
$\gamma_1 = \alpha_1$, $\gamma_2 = \alpha_2$, $\gamma_3 = \alpha_1+\al_3$, $\gamma_t = \sum_{k=1}^t \al_k$ ($t=4,\ldots,n$).

\ee

Indeed,  in the figures of Section \ref{Sec: calculation of (,)}, the root $\gamma_i$ is the rightmost one in the row labeled by $i$, and $\tau$ corresponds to the horizontal translation by $-2$.
Hence one can read such values of $\tcmc_{i,j}(k)$ easily from the figures.

\subsection{Other classical affine types}\ 

The denominator formulas for other classical affine types can be found in \cite[Appendix C.4]{AK} for type $C_n^{(1)}$ and in \cite[Appendix]{Oh15} for type $B_n^{(1)}$, $D_{n+1}^{(2)}$, $A_N^{(2)}$ ($N=2n, 2n-1$). 

\bnum
\item  Type $B_{n}^{(1)}$ ($n \ge 2$)

\bna
\item   $d_{k,l}(z) =  \displaystyle \prod_{s=1}^{\min (k,l)} \big(z-(-q)^{|k-l|+2s}\big)\big(z+(-q)^{2n-k-l-1+2s}\big)$ for $1 \le k,l \le n-1$,
\item $d_{k,n}(z) = \displaystyle  \prod_{s=1}^{k}\big(z-(-1)^{n+k}q_s^{2n-2k-1+4s}\big)$ for $1 \le k \le n-1$,
\item $ d_{n,n}(z)=\displaystyle \prod_{s=1}^{n} \big(z-(q_s)^{4s-2}\big)$.

\ee

\item Type $C_{n}^{(1)}$ ( $n \ge  2  $ )  
\bna
\item  $ d_{k,l}(z)= \hspace{-2ex}\displaystyle \hspace{-3ex}\prod_{s=1}^{ \min(k,l,n-k,n-l)} \hspace{-5ex}
\big(z-(-q_s)^{|k-l|+2s}\big)\hspace{-2ex}\prod_{s=1}^{ \min(k,l)}\hspace{-1.5ex} \big(z-(-q_s)^{2n+2-k-l+2s}\big)$  for $1 \le k,l \le n$.
\ee

\item Type $A_{2n-1}^{(2)}$ ($n\ge 2$) 

\bna
\item  $ d_{k,l}(z)= \displaystyle \prod_{s=1}^{\min(k,l)} \big(z-(-q)^{|k-l|+2s}\big)\big(z+(-q)^{2n-k-l+2s}\big)$ for $1 \le k,l \le n$.
\ee

\item Type $A_{2n}^{(2)}$ ($n \ge 1$)

\bna
\item   $d_{k,l}(z) = \displaystyle \prod_{s=1}^{\min(k,l)} \big(z-(-q)^{|k-l|+2s}\big)\big(z-(-q)^{2n+1-k-l+2s}\big)$ for $1 \le k,l \le n$.
\ee

\item Type $D_{n+1}^{(2)}$ ($n \ge 3$)

\bna
\item    $d_{k,l}(z) = \displaystyle \prod_{s=1}^{\min(k,l)} \big(z^2 - \mqs^{|k-l|+2s}\big)\big(z^2 - \mqs^{2n-k-l+2s}\big)$ for  $1 \le k,l \le n-1$,
\item $d_{k,n}(z) = \displaystyle \prod_{s=1}^{k}\big(z^2+(-q^{2})^{n-k+2s}\big)$  for $1 \le k \le n-1$,
\item  $ d_{n,n}(z)=\displaystyle \prod_{s=1}^{n} \big(z+\mqs^{s}\big)$ for $k=l=n$.
\ee

\ee

\subsection{Other exceptional affine types } \

The denominator formulas for exceptional affine type can be found in \cite[Section 4 and Section 7]{OS19}.

\bnum

\item Type $G_2^{(1)}$

\bna
\item  $ d_{1,1}(z)  =   (z  - q_t^6) (z  -  q_t^{8}) (z  -  q_t^{10}) (z  -  q_t^{12}) $,
\item  $ d_{1,2}(z)  =   (z  + q_t^7) (z  +  q_t^{11})$,
\item $ d_{2,2}(z) = (z -q_t^2) (z -q_t^{8}) (z -q_t^{12}) $.

\ee

\item Type $F_4^{(1)}$

\bna
\item $ d_{1,1}(z) = (z-q_s^4)  (z-q_s^{10}) (z-q_s^{12}) (z-q_s^{18})  $,
\item $d_{1,2}(z) = (z+q_s^{6}) (z+q_s^{8}) (z+q_s^{10}) (z+q_s^{12}) (z+q_s^{14}) (z+q_s^{16})  $,
\item $d_{1,3}(z) = (z-q_s^{7}) (z-q_s^{9}) (z-q_s^{13}) (z-q_s^{15})  $,
\item $d_{1,4}(z) = (z+q_s^{8}) (z+q_s^{14})  $,
\item $d_{2,2}(z) = (z-q_s^{4}) (z-q_s^{6}) (z-q_s^{8})^2 (z-q_s^{10})^2 (z-q_s^{12})^2 (z-q_s^{14})^2 (z-q_s^{16}) (z-q_s^{18})    $,
\item $d_{2,3}(z) = (z+q_s^{5}) (z+q_s^{7}) (z+q_s^{9}) (z+q_s^{11})^2 (z+q_s^{13}) (z+q_s^{15}) (z+q_s^{17})    $,
\item $ d_{2,4}(z) = (z-q_s^{6}) (z-q_s^{10}) (z-q_s^{12}) (z-q_s^{16}) $,
\item $ d_{3,3}(z) = (z - q_s^{2}) (z - q_s^{6}) (z - q_s^{8}) (z - q_s^{10}) (z - q_s^{12})^2 (z - q_s^{16}) (z - q_s^{18})  $,
\item $ d_{3,4}(z) = (z+q_s^{3}) (z+q_s^{7}) (z+q_s^{11}) (z+q_s^{13}) (z+q_s^{17})  $,
\item $ d_{4,4}(z) = (z - q_s^{2}) (z - q_s^{8}) (z - q_s^{12}) (z - q_s^{18})  $.

\ee

\item Type $ D_4^{(3)}$

\bna
\item  $ d_{1,1}(z) = (z-q^2) (z-q^6) (z-\omega q^4) (z - \omega^2 q^4)$,
\item $ d_{1,2}(z) =  (z^3 + q^9) (z^3 +  q^{15})$,
\item $ d_{2,2}(z) = (z^3-q^6) (z^3-q^{12})^{2} (z^3-q^{18}) $.
\ee

\item Type $E_6^{(2)}$

\bna
\item $d_{1,1}(z) = (z-q^{2})  (z+q^{6}) (z-q^{8}) (z+q^{12})  $,
\item $d_{1,2} (z) = ( z+q^{3}) ( z-q^{5}) ( z-q^{7}) ( z+q^{7}) ( z+q^{9}) ( z-q^{11})   $,
\item $d_{1,3}(z) = (z^2 + q^{8}) (z^2 + q^{12}) (z^2 + q^{16}) (z^2 + q^{20}) $, 
\item $d_{1,4}(z) = (z^2 + q^{10}) (z^2 + q^{18}) $,
\item $ d_{2,2}(z) = (z-q^{2}) (z-q^{4}) (z-q^{6}) (z-q^{8})^2 (z-q^{10}) (z+q^{4}) (z+q^{6})^2 (z+q^{8}) $ 

\qquad \qquad $(z+q^{10}) (z+q^{12})    $,

\item $d_{2,3}(z) = (z^2 + q^{6}) (z^2 + q^{10})^2 (z^2 + q^{14})^2 (z^2 + q^{18})^2 (z^2 + q^{22}) $,
\item $d_{2,4}(z) = (z^2 + q^8) (z^2 + q^{12}) (z^2 + q^{16}) (z^2 + q^{20})  $,
\item $d_{3,3}(z) = (z^2 - q^{4}) (z^2 - q^{8})^2 (z^2 - q^{12})^3 (z^2 - q^{16})^3 (z^2 - q^{20})^2 (z^2 - q^{24}) $,
\item $d_{3,4}(z) = ( z^2 - q^{6}) ( z^2 - q^{10}) ( z^2 - q^{14})^2 ( z^2 - q^{18})^{2} ( z^2 - q^{22})  $,

\item $d_{4,4}(z) = (z^2 - q^4)  (z^2 - q^{12}) (z^2 - q^{16}) (z^2 - q^{24})  $.

\ee

\ee

\vskip 2em

\end{document}